\DeclareSymbolFontAlphabet{\mathrsfs}{rsfs}
\newcommand{\cone}{\mathbb{C}}
\newcommand{\cpone}{\mathbb{P}^1}
\newcommand{\rone}{\mathbb{R}}
\newcommand{\upsnu}{\Upsilon^{(\nu)}}
\newcommand{\bein}{\mathcal{B}_{\text{in}}}
\newcommand{\bout}{\mathcal{B}_{\text{out}}}
\newcommand{\ccwarrow}{\text{\Large$\curvearrowleft$}}
\newtheorem{theorem}{Theorem}[section]
\newtheorem{lemma}[theorem]{Lemma}
\theoremstyle{definition}
\theoremstyle{remark}
\newtheorem{remark}[theorem]{Remark}
\numberwithin{equation}{section}
\newtheoremstyle{citing}{}{}{\itshape}{}{\bfseries}{.}{ }{\thmnote{#3}}
\theoremstyle{citing}
\begin{document}
\title{A construction of hyperk\"{a}hler metrics through Riemann-Hilbert problems I}


\author{C. Garza}
\address{Department of Mathematics, IUPUI, Indianapolis, USA}
\curraddr{}
\email{cegarza@iu.edu}
\thanks{}


\subjclass[2010]{Primary }

\date{}

\begin{abstract}
In 2009 Gaiotto, Moore and Neitzke presented a new construction of hyperk\"{a}hler metrics on the total spaces of certain complex integrable systems, represented as a torus fibration $\mathcal{M}$ over a base space $\mathcal{B}$, except for a divisor $D$ in $\mathcal{B}$, in which the torus fiber degenerates into a nodal torus. The hyperk\"{a}hler metric $g$ is obtained via solutions $\mathcal{X}_\gamma$ of a Riemann-Hilbert problem. We interpret the Kontsevich-Soibelman Wall Crossing Formula as an isomonodromic deformation of a family of RH problems, therefore guaranteeing continuity of $\mathcal{X}_\gamma$ at the walls of marginal stability. The technical details about solving the different classes of Riemann-Hilbert problems that arise here are left to a second article. To extend this construction to singular fibers, we use the Ooguri-Vafa case as our model and choose a suitable gauge transformation that allow us to define an integral equation defined at the degenerate fiber, whose solutions are the desired Darboux coordinates $\mathcal{X}_\gamma$. We show that these functions yield a holomorphic symplectic form $\varpi(\zeta)$, which, by Hitchin's twistor construction, constructs the desired hyperk\"{a}hler metric.
\end{abstract}

\maketitle

\section{Introduction}\label{intr}

Hyperk\"{a}hler manifolds first appeared within the framework of differential geometry as Riemannian manifolds with holonomy group of special restricted group. Nowadays, hyperk\"{a}hler geometry forms a separate research subject fusing traditional areas of mathematics such as differential and algebraic geometry of complex manifolds, holomorphic symplectic geometry, Hodge theory and many others. 

One of the latest links can be found in theoretical physics: In 2009, Gaiotto, Moore and Neitzke \cite{gaiotto} proposed  a new construction of hyperk\"{a}hler metrics $g$ on target spaces $\mathcal{M}$ of quantum field theories with $d = 4, \mathcal{N} = 2$ superysmmetry. Such manifolds were already known to be hyperk\"{a}hler (see \cite{seiberg}), but no known explicit hyperk\"{a}hler metrics have been constructed.

The manifold $\mathcal{M}$ is a total space of a complex integrable system and it can be expressed as follows. There exists a complex manifold $\mathcal{B}$, a divisor $D \subset \mathcal{B}$ and a subset $\mathcal{M}' \subset \mathcal{M}$ such that $\mathcal{M}'$ is a torus fibration over $\mathcal{B}' := \mathcal{B} \backslash D$. On the divisor $D$, the torus fibers of $\mathcal{M}$ degenerate, as Figure \ref{nodtorus} shows.

\begin{figure}[htbp]
	\centering
		\includegraphics[width=0.60\textwidth]{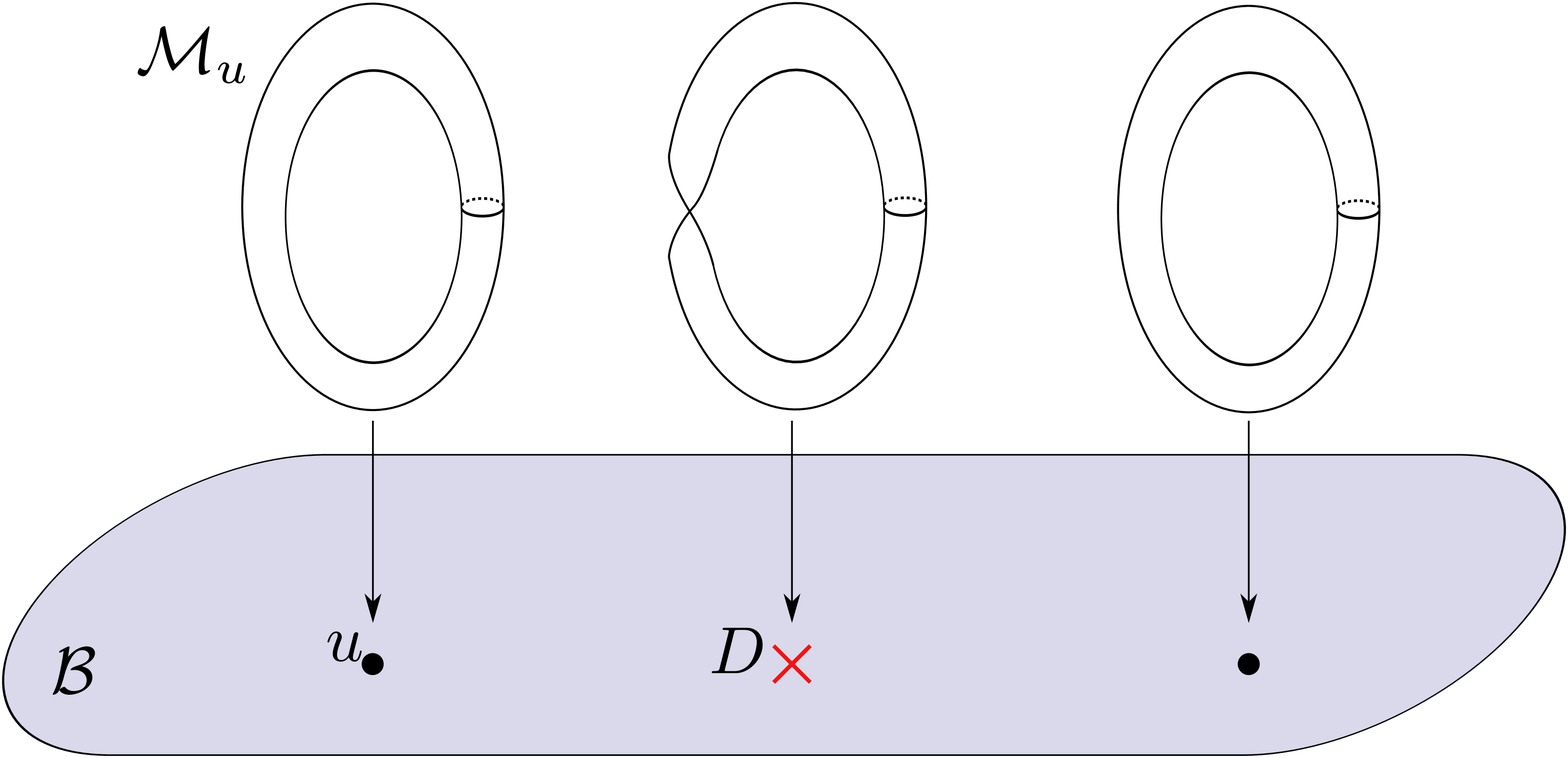}
	\caption{Hyperk\"{a}hler manifolds realized as torus fibrations}
	\label{nodtorus}
\end{figure}

Moduli spaces $\mathcal{M}$ of Higgs bundles on Riemann surfaces with prescribed singularities at finitely many points are one of the prime examples of this construction. Hyperk\"{a}hler geometry is useful since we can use Hitchin's twistor space construction \cite{hitchin} and consider all $\cpone$-worth of complex structures at once. In the case of moduli spaces of Higgs bundles, this allows us to consider $\mathcal{M}$ from three distinct viewpoints:

\begin{enumerate}
	\item (Dolbeault) $\mathcal{M}_{\text{Dol}}$ is the moduli space of Higgs bundles, i.e. pairs $(E, \Phi)$, $E \to C$ a rank $n$ degree zero holomorphic vector bundle and $\Phi \in \Gamma(\text{End}(E) \otimes \Omega^1)$ a Higgs field.
	
	\item (De Rham) $\mathcal{M}_{\text{DR}}$  is the moduli space of flat connections on rank $n$ holomorphic vector bundles, consisting of pairs $(E, \nabla)$ with $\nabla : E \to \Omega^1 \otimes E$ a holomorphic connection and
	
	\item (Betti) $\mathcal{M}_{\text{B}} = \text{Hom}(\pi_1(C) \to \text{GL}_n(\cone))/\text{GL}_n(\cone)$ of conjugacy classes of representations of the fundamental group of $C$. 
\end{enumerate}

\noindent All these algebraic structures form part of the family of complex structures making $\mathcal{M}$ into a hyperk\"{a}hler manifold. 

To prove that the manifolds $\mathcal{M}$ from the integrable systems are indeed hyperk\"{a}hler, we start with the existence of a simple, explicit hyperk\"{a}hler metric $g^{\text{sf}}$ on $\mathcal{M}'$. Unfortunately, $g^{\text{sf}}$ does not extend to $\mathcal{M}$.  To construct a complete metric $g$, it is necessary to do ``quantum corrections'' to $g^{\text{sf}}$. These are obtained by solving a certain explicit integral equation (see \eqref{inteq} below). The novelty is that the solutions, acting as Darboux coordinates for the hyperk\"{a}hler metric $g$, have discontinuities at a specific locus in $\mathcal{B}$. Such discontinuities cancel the global monodromy around $D$ and is thus feasible to expect that $g$ extends to the entire $\mathcal{M}$.

We start by defining a Riemann-Hilbert problem on the $\cpone$-slice of the twistor space $\mathcal{Z} = \mathcal{M}' \times \cpone$. That is, we look for functions $\mathcal{X}_\gamma$ with prescribed discontinuities and asymptotics. In the language of Riemann-Hilbert theory, this is known as \textit{monodromy data}. Rather than a single Riemann-Hilbert problem, we have a whole family of them parametrized by the $\mathcal{M}'$ manifold. We show that this family constitutes an \textit{isomonodromic deformation} since by the Kontsevich-Soibelman Wall-Crossing Formula, the monodromy data remains invariant. 

Although solving Riemann-Hilbert problems in general is not always possible, in this case it can be reduced to an integral equation solved by standard Banach contraction principles. We will focus on a particular case known as the ``Pentagon'' (a case of Hitchin systems with gauge group $\text{SU}(2)$). The family of Riemann-Hilbert problems and their methods of solutions is a topic of independent study so we leave this construction to a second article that can be of interest in the study of boundary-value problems.

The extension of the manifold $\mathcal{M}'$ is obtained by gluing a circle bundle with an appropriate gauge transformation eliminating any monodromy problems near the divisor $D$. The circle bundle constructs the degenerate tori at the discriminant locus $D$  (see Figure \ref{pinch}).

\begin{figure}[htbp]
	\centering
		\includegraphics[width=0.15\textwidth]{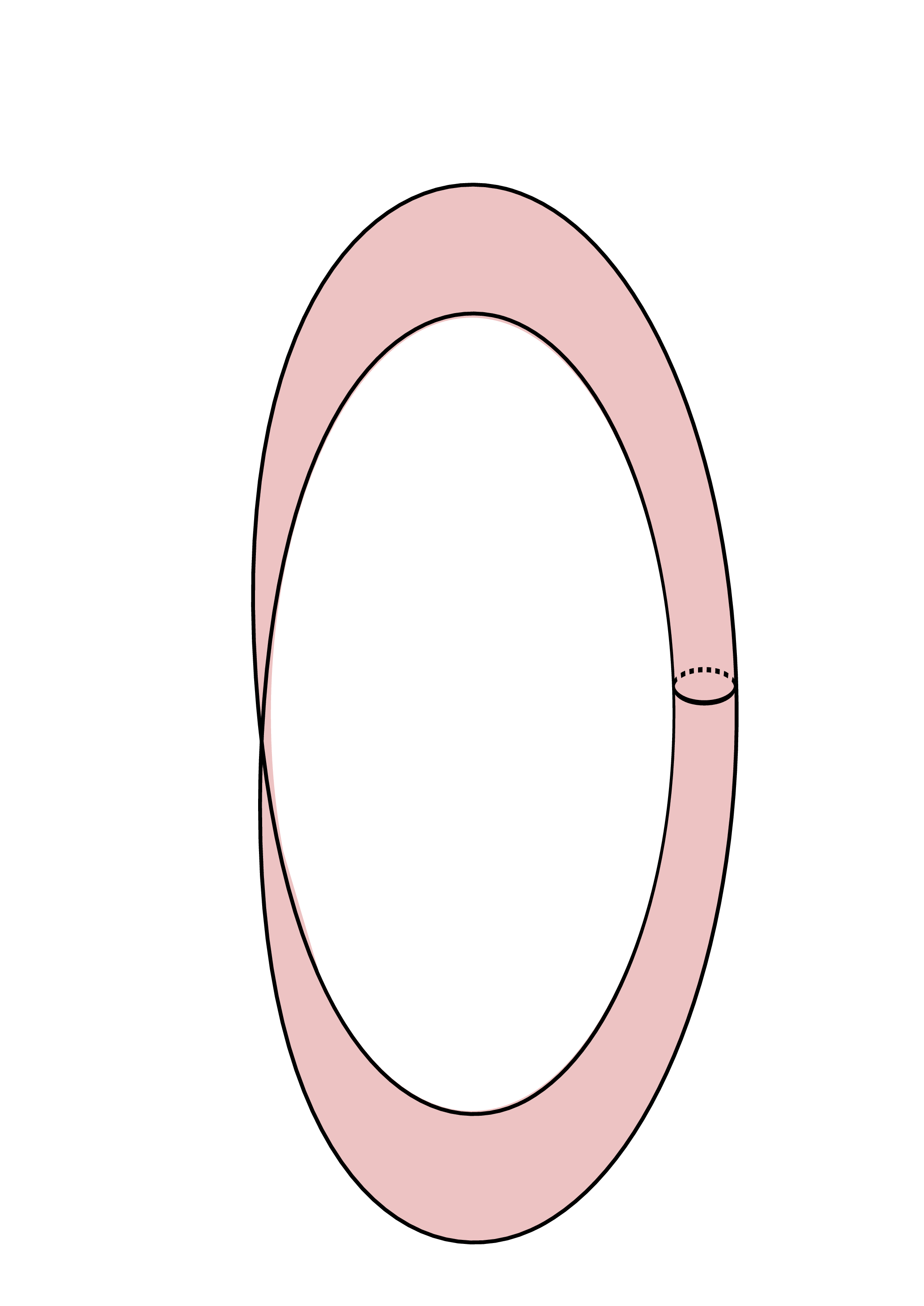}
	\caption{Construction of degenerate fibers}
	\label{pinch}
\end{figure}

On the extended manifold $\mathcal{M}$ we prove that the solutions $\mathcal{X}_\gamma$ of the Riemann-Hilbert problem on $\mathcal{M}'$ extend and the resulting holomorphic symplectic form $\varpi(\zeta)$ gives the desired hyperk\"{a}hler metric $g$. 

Although for the most basic examples of this construction such as the moduli space of Higgs bundles it was already known that $\mathcal{M}'$ extends to a hyperk\"{a}hler manifold $\mathcal{M}$ with degenerate torus fibers, the construction here works for the general case of $\dim_\cone \mathcal{B} = 1$. Moreover, the functions $\mathcal{X}_\gamma$ here are special coordinates arising in moduli spaces of flat connections, Teichm\"{u}ller theory and Mirror Symmetry. In particular, these functions are used in \cite{chan} for the construction of holomorphic discs with boundary on special Lagrangian torus fibers of mirror manifolds.

The organization of the paper is as follows. In Section \ref{intsys} we introduce the complex integrable systems to be considered in this paper. These systems arose first in the study of moduli spaces of Higgs bundles and they can be written in terms of initial data and studied abstractly. This leads to a formulation of a family of Riemann-Hilbert problems, whose solutions provide Darboux coordinates for the moduli spaces $\mathcal{M}$ considered and hence equip the latter with a hyperk\"{a}hler structure. In Section \ref{ov} we fully work the simplest example of these integrable systems: the Ooguri-Vafa case. Although the existence of this hyperk\"{a}hler metric was already known, this is the first time it is obtained via Riemann-Hilbert methods. In Section \ref{gmetric}, we explicitly show that this metric is a smooth deformation of the well-known Taub-NUT metric near the singular fiber of $\mathcal{M}$ thus proving its extension to the entire manifold. In Section \ref{pent} we introduce our main object of study, the Pentagon case. This is the first nontrivial example of the integrable systems considered and here the Wall Crossing phenomenon is present. We use the KS wall-crossing formula to apply an isomonodromic deformation of the Riemann-Hilbert problems leading to solutions continuous at the wall of marginal stability. Finally, Section \ref{sfiber} deals with the extension of these solutions $\mathcal{X}_\gamma$ to singular fibers of $\mathcal{M}$ thought as a torus fibration. What we do is to actually complete the manifold $\mathcal{M}$ from a regular torus fibration $\mathcal{M}'$ by gluing circle bundles near a discriminant locus $D$. This involves a change of the torus coordinates for the fibers of $\mathcal{M}'$. In terms of the new coordinates, the $\mathcal{X}_\gamma$ functions extend to the new patch and parametrize the complete manifold $\mathcal{M}$. We finish the paper by showing that, near the singular fibers of $\mathcal{M}$, the hyperk\"{a}hler metric $g$ looks like the metric for the Ooguri-Vafa case plus some smooth corrections, thus proving that this metric is complete.

\textbf{Acknowledgment:} The author likes to thank Andrew Neitzke for his guidance, support and incredibly helpful conversations. 

\section{Integrable Systems Data}\label{intsys}

We start by presenting the complex integrable systems introduced in \cite{gaiotto}. As motivation, consider the moduli space $\mathcal{M}$ of Higgs bundles on a complex curve $C$ with Higgs field $\Phi$ having prescribed singularities at finitely many points. In \cite{wkb}, it is shown that the space of quadratic differentials $u$ on $C$ with fixed poles and residues is a complex affine space $\mathcal{B}$ and the map $\text{det} : \mathcal{M} \to \mathcal{B}$ is proper with generic fiber $\text{Jac}(\Sigma_u)$, a compact torus obtained from the \textit{spectral curve} $\Sigma_u : = \{(z, \phi) \in T^*C : \phi^2 = u\}$,  a double-branched cover of $C$ over the zeroes of the quadratic differential $u$. $\Sigma_u$ has an involution that flips $\phi \mapsto -\phi$. If we take $\Gamma_u$ to be the subgroup of $H_1(\Sigma_u, \mathbb{Z})$ odd under this involution, $\Gamma$ forms a lattice of rank 2 over $\mathcal{B}'$, the space of quadratic differentials with only simple zeroes. This lattice comes with a non-degenerate anti-symmetric pairing $\left\langle , \right\rangle$ from the intersection pairing in $H_1$. It is also proved in \cite{wkb} that the fiber $\text{Jac}(\Sigma_u)$ can be identified with the set of characters $\text{Hom}(\Gamma_u, \rone/2\pi \mathbb{Z})$. If $\lambda$ denotes the tautological 1-form in $T^* C$, then for any $\gamma \in \Gamma$,
\[ Z_\gamma = \frac{1}{\pi}  \oint_\gamma \lambda \]
defines a holomorphic function $Z_\gamma$ in $\mathcal{B}'$. Let $\{\gamma_1, \gamma_2\}$ be a local basis of $\Gamma$ with $\{\gamma^1, \gamma^2\}$ the dual basis of $\Gamma^*$. Without loss of generality, we also denote by $\left\langle ,  \right\rangle$ the pairing in $\Gamma^*$. Let $\left\langle dZ \wedge dZ \right\rangle$ be short notation for $\left\langle \gamma^1, \gamma^2 \right\rangle dZ_{\gamma_1} \wedge dZ_{\gamma_2}$. Since $\dim_\cone \mathcal{B}' = 1$, $\left\langle dZ \wedge dZ \right\rangle = 0$.

This type of data arises in the construction of hyperk\"{a}hler manifolds as in \cite{gaiotto} and \cite{notes}, so we summarize the conditions required:

We start with a complex manifold $\mathcal{B}$ (later shown to be affine) of dimension $n$ and a divisor $D \subset \mathcal{B}$. Let $\mathcal{B}' = \mathcal{B} \backslash D$. Over $\mathcal{B}'$ there is a local system $\Gamma$ with fiber a rank $2n$ lattice, equipped with a non-degenerate anti-symmetric integer valued pairing $\left\langle \, , \right\rangle$. 

We will denote by $\Gamma^*$ the dual of $\Gamma$ and, by abuse of notation, we'll also use $\left\langle \, , \right\rangle$ for the dual pairing (not necessarily integer-valued) in $\Gamma^*$. Let $u$ denote a general point of $\mathcal{B}'$. We want to obtain a torus fibration over $\mathcal{B}'$, so let $\text{TChar}_u(\Gamma)$ be the set of twisted unitary characters of $\Gamma_u$\footnote{Although we can also work with the set of unitary characters (no twisting involved) by shifting the $\theta$-coordinates, we choose not to do so, as that results in more complex calculations}, i.e. maps $\theta : \Gamma_u \to \mathbb{R}/2\pi \mathbb{Z}$ satisfying
\begin{equation*}
\theta_\gamma + \theta_{\gamma'} = \theta_{\gamma + \gamma'} + \pi \left\langle \gamma, \gamma' \right\rangle.
\end{equation*}
Topologically, $\text{TChar}_u(\Gamma)$ is a torus $(S^1)^{2n}$. Letting $u$ vary, the $\text{TChar}_u(\Gamma)$ form a torus bundle $\mathcal{M}'$ over $\mathcal{B}'$. Any local section $\gamma$ gives a local angular coordinate of $\mathcal{M}'$ by ``evaluation on $\gamma$'', $\theta_\gamma : \mathcal{M}' \to \mathbb{R}/2\pi \mathbb{Z}$.

We also assume there exists a homomorphism $Z : \Gamma \to \cone$ such that the vector $Z(u) \in \Gamma^*_u \otimes \cone$ varies holomorphically with $u$. If we pick a patch $U \subset \mathcal{B}'$ on which $\Gamma$ admits a basis $\{\gamma_1, \ldots, \gamma_{2n}\}$ of local sections in which $\left\langle , \right\rangle$ is the standard symplectic pairing, then (after possibly shrinking $U$) the functions
\[ f_i = \text{Re}(Z_{\gamma_i}) \]
are real local coordinates. The transition functions on overlaps $U \cap U'$ are valued on $\text{Sp}(2n, \mathbb{Z})$, as different choices of basis in $\Gamma$ must fix the symplectic pairing. This gives an affine structure on $\mathcal{B}'$.

 By differentiating and evaluating in $\gamma$, we get 1-forms $d\theta_\gamma, d Z_\gamma$ on $\mathcal{M}'$ which are linear on $\Gamma$. For a local basis $\{\gamma_1, \ldots, \gamma_{2n}\}$ as in the previous paragraph, let $\{\gamma^1, \ldots, \gamma^{2n}\}$ denote its dual basis on $\Gamma^*$. We write $\left\langle dZ \wedge dZ \right\rangle$ as short notation for
 \begin{equation}\label{dzdz}
  \left\langle \gamma^i , \gamma^j \right\rangle dZ_{\gamma_i} \wedge dZ_{\gamma_j},
  \end{equation}
 where we sum over repeated indices. Observe that the anti-symmetric pairing $\left\langle \, , \right\rangle$ and the anti-symmetric wedge product of 1-forms makes \eqref{dzdz} symmetric.  We require that:
 
\begin{equation}\label{dz0}
\left\langle dZ \wedge dZ \right\rangle = 0,
\end{equation}

By \eqref{dz0}, near $u$, $\mathcal{B}'$ can be locally identified with a complex Lagrangian submanifold of $\Gamma^* \otimes_\mathbb{Z} \cone$.

In the example of moduli spaces of Higgs bundles, as $u$ approaches a quadratic differential with non-simple zeros, one homology cycles vanishes (see Figure \ref{nodtorus}). This cycle $\gamma_0$ is primitive in $H_1$ and its monodromy around the critical quadratic differential is governed by the Picard-Lefschetz formula. In the general case, let $D_0$ be a component of the divisor $D \subset \mathcal{B}$. We also assume the following:

\begin{itemize}
	\item $Z_{\gamma_0}(u) \to 0$ as $u \to u_0 \in D_0$ for some $\gamma_0 \in \Gamma$.
	
	
	\item $\gamma_0$ is primitive (i.e. there exists some $\gamma'$ with $\left\langle \gamma_0, \gamma'\right\rangle = 1$).
	
	\item The monodromy of $\Gamma$ around $D_0$ is of ``Picard-Lefschetz type'', i.e.
	\begin{equation}\label{piclf}
	\gamma \mapsto \gamma + \left\langle \gamma, \gamma_0 \right\rangle \gamma_0
	\end{equation}
\end{itemize}

We assign a complex structure and a holomorphic symplectic form on $\mathcal{M}'$ as follows (see \cite{notes} and the references therein for proofs). Take a local basis $\{\gamma_1, \ldots, \gamma_{2n}\}$ of $\Gamma$. If $\epsilon^{ij} = \left\langle \gamma_i , \gamma_j\right\rangle$ and $\epsilon_{ij}$ is its dual, let
\begin{equation}\label{ome}
\omega_+ = \left\langle dZ \wedge d\theta \right\rangle = \epsilon_{ij} \, dZ_{\gamma_i} \wedge d\theta_{\gamma_j}.
\end{equation}
By linearity on $\gamma$ of the 1-forms, $\omega_+$ is independent of the choice of basis. There is a unique complex structure $J$ on $\mathcal{M}'$ for which $\omega_+$ is of type (2,0). The 2-form $\omega_+$ gives a holomorphic symplectic structure on $(\mathcal{M}', J)$. With respect to this structure, the projection $\pi: \mathcal{M}' \to \mathcal{B}'$ is holomorphic, and the torus fibers $\mathcal{M}'_u = \pi^{-1}(u)$ are compact complex Lagrangian submanifolds. 

Recall that a positive 2-form $\omega$ on a complex manifold is a real 2-form for which $\omega(v,Jv) >0$ for all real tangent vectors $v$. From now on, we assume that $\left\langle dZ \wedge d\overline{Z} \right\rangle$ is a positive 2-form on $\mathcal{B}'$. Now fix $R > 0$. Then we can define a 2-form on $\mathcal{M}'$ by
\begin{equation*}
\omega_3^{\text{sf}} = \frac{R}{4} \left\langle dZ \wedge d\overline{Z} \right\rangle - \frac{1}{8\pi^2 R}\left\langle d\theta \wedge d\theta \right\rangle.
\end{equation*}
This is a positive form of type (1,1) in the $J$ complex structure. Thus, the triple $(\mathcal{M}', J, \omega_3^{\text{sf}})$ determines a K\"{a}hler metric $g^{\text{sf}}$ on $\mathcal{M}'$. This metric is in fact hyperk\"{a}hler (see \cite{freed}), so we have a whole $\cpone$-worth of complex structures for $\mathcal{M}'$, parametrized by $\zeta \in \cpone$. The above complex structure $J$ represents $J(\zeta = 0)$, the complex structure at $\zeta = 0$ in $\cpone$. The superscript ${}^\text{sf}$ stands for ``semiflat''. This is because $g^{\text{sf}}$ is flat on the torus fibers $\mathcal{M}'_u$.

Alternatively, it is shown in \cite{gaiotto} that if
\begin{equation}\label{xsfr}
\mathcal{X}_\gamma^{\text{sf}}(\zeta) = \exp\left( \frac{\pi R Z_\gamma}{\zeta} + i\theta_\gamma + \pi R \zeta \overline{Z_\gamma} \right)
\end{equation}
Then the 2-form
\[ \varpi(\zeta) = \frac{1}{8\pi^2 R} \left\langle d\log \mathcal{X}^{\text{sf}}(\zeta) \wedge d\log \mathcal{X}^{\text{sf}}(\zeta) \right\rangle \]
(where the DeRham operator $d$ is applied to the $\mathcal{M}'$ part only) can be expressed as
\[ -\frac{i}{2\zeta}\omega_+ + \omega^{\text{sf}}_3 -\frac{i \zeta}{2} \omega_-, \]
for $\omega_- = \overline{\omega_+} = \left\langle d\overline{Z} \wedge d\overline{\theta} \right\rangle$, that is, in the twistor space $\mathcal{Z} = \mathcal{M}' \times \cpone$ of \cite{hitchin}, $\varpi(\zeta)$ is a holomorphic section of $\Omega_{\mathcal{Z}/\cpone} \otimes \mathcal{O}(2)$ (the twisting by $\mathcal{O}(2)$ is due to the poles at $\zeta = 0$ and $\zeta = \infty$ in $\cpone$). This is the key step in Hitchin's twistor space construction. By \cite[\S 3]{gaiotto}, $\mathcal{M}'$ is hyperk\"{a}hler.

We want to reproduce the same construction of a hyperk\"{a}hler metric now with corrected Darboux coordinates $\mathcal{X}_\gamma(\zeta)$. For that, we need another piece of data. Namely, a function $\Omega : \Gamma \to \mathbb{Z}$ such that $\Omega(\gamma;u) = \Omega(-\gamma;u)$. Furthermore, we impose a condition on the nonzero $\Omega(\gamma;u)$. Introduce a positive definite norm on $\Gamma$. Then we require the existence of $K > 0$ such that
\begin{equation}\label{support}
\frac{|Z_\gamma|}{\left\| \gamma \right\|} > K
\end{equation}
for those $\gamma$ such that $\Omega(\gamma; u) \neq 0$. This is called the \emph{Support Property}, as in \cite{gaiotto}.

For a component of the singular locus $D_0$ and for $\gamma_0$ the primitive element in $\Gamma$ for which $Z_{\gamma_0} \to 0$ as $u \to u_0 \in D_0$, we also require
\[ \Omega(\gamma_0; u) = 1 \text{ for all $u$ in a neighborhood of $D_0$} \]

To see where these invariants arise from, consider the example of moduli spaces of Higgs bundles again. A quadratic differential $u \in \mathcal{B}'$ determines a metric $h$ on $C$. Namely, if $u = P(z)dz^2$, $h = |P(z)| dz d\overline{z}$. Let $C'$ be the curve obtained after removing the poles and zeroes of $u$. Consider the finite length inextensible geodesics on $C'$ in the metric $h$. These come in two types:

\begin{enumerate}
	\item \textit{Saddle connections}: geodesics running between two zeroes of $u$. See Figure \ref{saddlec}.
	
	\begin{figure}[htbp]
	\centering
		\includegraphics[width=0.40\textwidth]{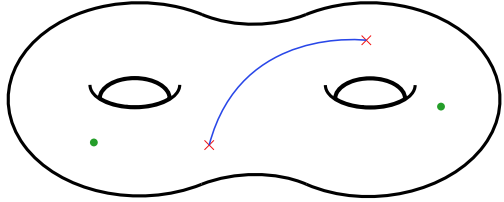}
	\caption{Saddle connections on $C'$}
	\label{saddlec}
\end{figure}

\item \textit{Closed geodesics}: When they exist, they come in 1-parameter families sweeping out annuli in $C'$. See Figure \ref{clgeod}.

	\begin{figure}[htbp]
	\centering
		\includegraphics[width=0.40\textwidth]{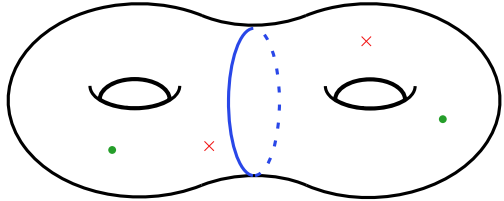}
	\caption{Closed geodesics on $C'$ sweeping annuli}
	\label{clgeod}
\end{figure}

\end{enumerate}

On the branched cover $\Sigma_u \to C$, each geodesic can be lifted to a union of closed curves in $\Sigma_u$, representing some homology class $\gamma \in H_1(\Sigma_u, \mathbb{Z})$. See Figure \ref{lift}.

	\begin{figure}[htbp]
	\centering
		\includegraphics[width=0.40\textwidth]{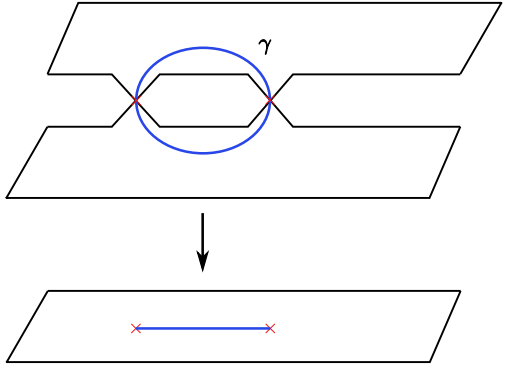}
	\caption{Lift of geodesics to $\Sigma_u$}
	\label{lift}
\end{figure}

 In this case, $\Omega(\gamma,u)$ counts these finite length geodesics: every saddle connection with lift $\gamma$ contributes $+1$ and every closed geodesic with lift $\gamma$ contributes $-2$.

Back to the general case, we're ready to formulate a Riemann-Hilbert problem on the $\cpone$-slice of the twistor space $\mathcal{Z} = \mathcal{M}' \times \cpone$. Recall that in a RH problem we have a contour $\Sigma$ dividing a complex plane (or its compactification) and one tries to obtain functions which are analytic in the regions defined by the contour, with continuous extensions along the boundary and with prescribed discontinuities along $\Sigma$ and fixed asymptotics at the points where $\Sigma$ is non-smooth. In our case, the contour is a collection of rays at the origin and the discontinuities can be expressed as symplectomorphisms of a complex torus:

Define a ray associated to each $\gamma \in \Gamma_u$ as:
\[ \ell_\gamma(u) = Z_\gamma \mathbb{R}_-. \]
We also define a transformation of the functions $\mathcal{X}_{\gamma'}$ given by each $\gamma \in \Gamma_u$:
\begin{equation}\label{kjump}
\mathcal{K}_\gamma \mathcal{X}_{\gamma'} = \mathcal{X}_{\gamma'} (1- \mathcal{X}_{\gamma})^{\left\langle \gamma', \gamma \right\rangle}
\end{equation}
Let $T_u$ denote the space of twisted complex characters of $\Gamma_u$, i.e. maps $\mathcal{X} : \Gamma_u \to \cone^{\times}$ satisfying
\begin{equation}\label{xprop}
 \mathcal{X}_\gamma \mathcal{X}_{\gamma'} = (-1)^{\left\langle \gamma, \gamma'\right\rangle} \mathcal{X}_{\gamma + \gamma'}
\end{equation}
$T_u$ has a canonical Poisson structure given by
\[ \{ \mathcal{X}_\gamma,  \mathcal{X}_{\gamma'} \} = \left\langle \gamma, \gamma' \right\rangle \mathcal{X}_{\gamma + \gamma'}\]
The $T_u$ glue together into a bundle over $\mathcal{B}'$ with fiber a complex Poisson torus. Let $T$ be the pullback of this system to $\mathcal{M}'$. We can interpret the transformations $\mathcal{K}_\gamma$ as birational automorphisms of $T$.
To each ray $\ell$ going from 0 to $\infty$ in the $\zeta$-plane, we can define a transformation
\begin{equation}\label{stkfac}
S_\ell = \prod_{\gamma : \ell_\gamma(u) = \ell} \mathcal{K}_\gamma^{\Omega(\gamma;u)}
\end{equation}
Note that all the $\gamma$'s involved in this product are multiples of each other, so the $\mathcal{K}_\gamma$ commute and it is not necessary to specify an order for the product.

To obtain the corrected $\mathcal{X}_\gamma$, we can formulate a Riemann-Hilbert problem for which the former functions are solutions to it.  We seek a map $\mathcal{X} : \mathcal{M}'_u \times \cone^{\times} \to T_u$ with the following properties:
\begin{enumerate}[label=\textnormal{(\arabic*)}]
	\item $\mathcal{X}$ depends piecewise holomorphically on $\zeta$, with discontinuities only at the rays $\ell_\gamma(u)$ for which $\Omega(\gamma;u) \neq 0$.
	\item The limits $\mathcal{X}^{\pm}$ as $\zeta$ approaches any ray $\ell$ from both sides exist and are related by
	\begin{equation}\label{invjmp}
	\mathcal{X}^+ = S_\ell^{-1} \circ \mathcal{X}^-
	\end{equation}
	\item $\mathcal{X}$ obeys the reality condition
	\begin{equation}\label{realcond}
	\overline{\mathcal{X}_{-\gamma}(-1/\overline{\zeta})} = \mathcal{X}_\gamma(\zeta)
	\end{equation}
	\item For any $\gamma \in \Gamma_u$, $\lim_{\zeta \to 0} \mathcal{X}_\gamma(\zeta) / \mathcal{X}^{\text{sf}}_\gamma(\zeta)$ exists and is real. \label{asymptotic}
\end{enumerate}

In \cite{gaiotto}, this RH problem is formulated as an integral equation:
\begin{equation}\label{inteq}
\mathcal{X}_\gamma(u,\zeta) = \mathcal{X}^{\text{sf}}_\gamma(u,\zeta)\exp\left[ -\frac{1}{4\pi i} \sum_{\gamma'} \Omega(\gamma';u) \left\langle \gamma, \gamma' \right\rangle \int_{\ell_{\gamma'(u)}} \frac{d\zeta'}{\zeta'} \frac{\zeta'+\zeta}{\zeta'-\zeta}\log\left( 1 - \mathcal{X}_{\gamma'}(u,\zeta')\right)\right],
\end{equation}
One can define recursively, setting $\mathcal{X}^{(0)} = \mathcal{X}^{\text{sf}}$:
\begin{equation}\label{recurs}
\mathcal{X}^{(\nu+1)}_\gamma(u,\zeta) = \mathcal{X}^{\text{sf}}_\gamma(u,\zeta)\exp\left[ -\frac{1}{4\pi i} \sum_{\gamma'} \Omega(\gamma';u) \left\langle \gamma, \gamma' \right\rangle \int_{\ell_{\gamma'(u)}} \frac{d\zeta'}{\zeta'} \frac{\zeta'+\zeta}{\zeta'-\zeta}\log\left( 1 - \mathcal{X}^{(\nu)}_{\gamma'}(u,\zeta')\right)\right],
\end{equation}

More precisely, we have a family of RH problems, parametrized by $u \in \mathcal{B}'$, as this defines the rays $\ell_\gamma(u)$, the complex torus $T_u$ where the symplectomorphisms are defined and the invariants $\Omega(\gamma;u)$ involved in the definition of the problem.

We still need one more piece of the puzzle, since the latter function $\Omega$ may not be continuous. In fact, $\Omega$ jumps along a real codimension-1 loci in $\mathcal{B}'$ called the ``wall of marginal stability''. This is the locus where 2 or more functions $Z_\gamma$ coincide in phase, so two or more rays $\ell_{\gamma}(u)$ become one. More precisely:
\[ W = \{u \in \mathcal{B}': \exists \gamma_1, \gamma_2 \text{ with } \Omega(\gamma_1;u) \neq 0, \Omega(\gamma_2;u) \neq 0, \left\langle \gamma_1, \gamma_2\right\rangle \neq 0, Z_{\gamma_1}/Z_{\gamma_2} \in \rone_+\}\]
 The jumps of $\Omega$ are not arbitrary; they are governed by the Kontsevich-Soibelman wall-crossing formula.

To describe this, let $V$ be a strictly convex cone in the $\zeta$-plane with apex at the origin. Then for any $u \notin W$ define
\begin{equation}
A_V(u) = \prod^\ccwarrow_{\gamma : Z_\gamma(u) \in V} \mathcal{K}_\gamma^{\Omega(\gamma;u)} = \prod^\ccwarrow_{\ell \subset V} S_\ell\footnote{This product may be infinite. One should more precisely think of $A_V(u)$ as living in a certain prounipotent completion of the group generated by $\{\mathcal{K}_\gamma\}_{\gamma : Z_\gamma(u) \in V}$ as explained in \cite{kont}}
\end{equation}

The arrow indicates the order of the rational maps $\mathcal{K}_\gamma$. $A_V(u)$ is a birational Poisson automorphism of $T_u$. Define a $V$-\textit{good path} to be a path $p$ in $\mathcal{B}'$ along which there is no point $u$ with $Z_\gamma(u) \in \partial V$ and $\Omega(\gamma;u) \neq 0$. (So as we travel along a $V$-good path, no $\ell_\gamma$ rays enter or exit V.) If $u, u'$ are the endpoints of a $V$-good path $p$, the wall-crossing formula is the condition that $A_V(u), A_V(u')$ are related by parallel transport in $T$ along $p$. See Figure \ref{partrnpt}.

	\begin{figure}[htbp]
	\centering
		\includegraphics[width=0.80\textwidth]{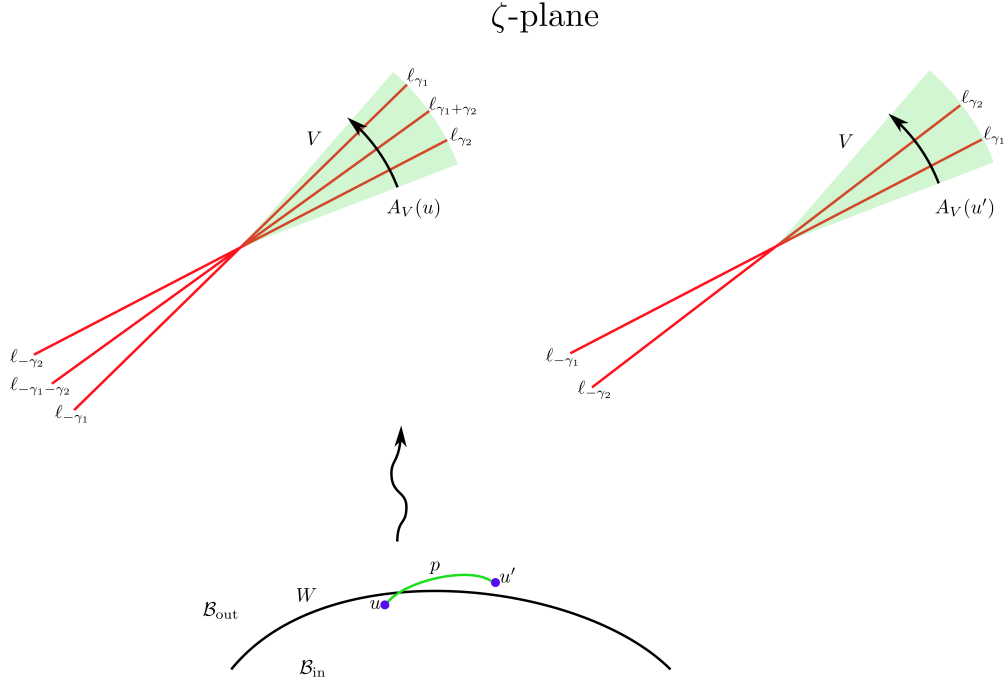}
	\caption{For a good path $p$, the two automorphisms $A_V(u), A_V(u')$ are related by parallel transport}
	\label{partrnpt}
\end{figure}


\subsection{Statement of Results}\label{results}

We will restrict in this paper to the case $\dim_\cone \mathcal{B} = 1$, so $n = \dim \Gamma = 2$. We want to extend the torus fibration $\mathcal{M}'$ to a manifold $\mathcal{M}$ with degenerate torus fibers.  To give an example, in the case of Hitchin systems, the torus bundle $\mathcal{M}'$ is not the moduli space of Higgs bundles yet, as we have to consider quadratic differentials with non-simple zeroes too. The main results of this paper center on the extension of the manifold $\mathcal{M}'$ to a manifold $\mathcal{M}$ with an extended fibration $\mathcal{M} \to \mathcal{B}$ such that the torus fibers $\mathcal{M}'_u$ degenerate to nodal torus (i.e. ``singular'' or ``bad'' fibers) for $u \in D$.

We start by fully working out the simplest example known as Ooguri-Vafa \cite{cecotti}. Here we have a fibration over the open unit disk $\mathcal{B} := \{u \in \cone : |u| < 1 \}$. At the discriminant locus $D : = \{ u = 0 \}$, the fibers degenerate into a nodal torus. The local rank-2 lattice $\Gamma$ has a basis $(\gamma_m, \gamma_e)$ and the skew-symmetric pairing is defined by $\left\langle \gamma_m, \gamma_e \right\rangle = 1$. The monodromy of $\Gamma$ around $u = 0$ is $\gamma_e \mapsto \gamma_e, \gamma_m \mapsto \gamma_m + \gamma_e$. We also have functions $Z_{\gamma_e}(u) = u, Z_{\gamma_m}(u) = \frac{u}{2\pi i }( \log u - 1) + f(u)$, for $f$ holomorphic and admitting an extension to $\mathcal{B}$. Finally, the integer-valued function $\Omega$ in $\Gamma$ is here: $\Omega(\pm \gamma_e; u) = 1$ and $\Omega(\gamma; u) = 0$ for any other $\gamma \in \Gamma_u$. There is no wall of marginal stability in this case. The integral equation \eqref{inteq} can be solved after just 1 iteration.

For all other nontrivial cases, in order to give a satisfactory extension of the $\mathcal{X}_\gamma$ coordinates, it was necessary to develop the theory of Riemann-Hilbert-Birkhoff problems to suit these infinite-dimensional systems (as the transformations $S_\ell$ defining the problem can be thought as operators on $C^\infty(T_u)$, rather than matrices). It is not clear that such coordinates can be extended, since we may approach the bad fiber from two different sides of the wall of marginal stability and obtain two different extensions. To overcome this first obstacle, we have to use the theory of isomonodromic deformations as in \cite{boalch} to reformulate the Riemann-Hilbert problem in \cite{gaiotto} independent of the regions determined by the wall.

Having redefined the problem, we want our $\mathcal{X}_\gamma$ to be smooth on the parameters $\theta_{\gamma_1},  \theta_{\gamma_{2}}$ and $u$,
away from where the prescribed jumps are. Even at $\mathcal{M}'$, there was no mathematical proof that such condition must be true. In the companion paper \cite{rhprob}, we combine classical Banach contraction methods and Arzela-Ascoli results on uniform convergence in compact sets to obtain:
\begin{theorem}\label{smooth}
If the collection $J$ of nonzero $\Omega(u; \gamma)$ satisfies the support property \eqref{support} and if the parameter $R$ of \eqref{xsfr} is large enough (determined by the values $|Z_\gamma(u)|, \gamma \in J$), there exists a unique collection of functions $\mathcal{X}_\gamma$ with the prescribed asymptotics and jumps as in \cite{gaiotto}. These functions are smooth on $u$ and the torus coordinates $\theta_1, \theta_{2}$ (even for $u$ at the wall of marginal stability), and piecewise holomorphic on $\zeta$.
\end{theorem}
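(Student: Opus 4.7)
The plan is to set up \eqref{inteq} as a fixed point equation in a Banach space adapted to the semiflat asymptotics, apply Banach contraction, and then upgrade regularity by differentiating the fixed point equation. Over a compact set $K$ in parameter space $(u,\theta_1,\theta_2)$ with $u$ away from the zeros of the $Z_\gamma$ (or, near the wall $W$, on a reformulated contour as in Section \ref{pent}), consider the space $\mathcal{F}$ of bounded continuous families $\{\mathcal{X}_\gamma\}_{\gamma \in J}$ depending piecewise holomorphically on $\zeta \in \cone^\times$ and satisfying the reality condition \eqref{realcond}, equipped with a weighted norm of the form
\[
\|\mathcal{X}\| \;=\; \sup_{\gamma\in J,\;(u,\theta,\zeta)}\; e^{\pi R|Z_\gamma|}\,\bigl|\mathcal{X}_\gamma/\mathcal{X}_\gamma^{\text{sf}}-1\bigr|,
\]
which captures the asymptotic behavior forced by property \ref{asymptotic}. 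Let $T$ denote the operator given by the right-hand side of \eqref{inteq}; solutions are exactly fixed points of $T$ in $\mathcal{F}$.

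The main estimate is that $T$ is a contraction on a small ball in $\mathcal{F}$ provided $R$ is large enough. Three ingredients are used: (a) along $\ell_{\gamma'}$ one has $|\mathcal{X}^{\text{sf}}_{\gamma'}(\zeta')| = \exp(-\pi R|Z_{\gamma'}|(|\zeta'|+|\zeta'|^{-1}))$, so $|\log(1-\mathcal{X}_{\gamma'})|$ is exponentially small, integrable against $d\zeta'/\zeta'$, and the Cauchy kernel $(\zeta'+\zeta)/(\zeta'-\zeta)$ is bounded uniformly once $\zeta$ is separated from $\ell_{\gamma'}$; (b) the support property \eqref{support} implies that only finitely many $\gamma'$ have $|Z_{\gamma'}|$ below any fixed threshold and in fact that $\sum_{\gamma'}|\langle \gamma,\gamma'\rangle|\,e^{-\pi R K\|\gamma'\|}$ converges as soon as $R$ exceeds some $R_0(K)$, whence the infinite sum in \eqref{inteq} converges absolutely and defines a bounded operator; (c) the difference $T(\mathcal{X})-T(\mathcal{Y})$ picks up a factor $\mathcal{X}_{\gamma'}-\mathcal{Y}_{\gamma'}$ whose exponential weight, after the same estimate, produces a Lipschitz constant tending to $0$ as $R\to\infty$. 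Uniqueness and existence of the fixed point $\mathcal{X}$ follow at once, and equivalently the iterates $\mathcal{X}^{(\nu)}$ of \eqref{recurs} converge geometrically.

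Smoothness in $(u,\theta_1,\theta_2)$ I would obtain by formally differentiating \eqref{inteq} in any parameter $x \in \{u,\bar u,\theta_1,\theta_2\}$: the result is a linear integral equation for $\partial_x\mathcal{X}_\gamma$ whose kernel is again damped by $e^{-\pi R|Z_{\gamma'}|}$ (a $\partial_x$ applied to $\mathcal{X}^{\text{sf}}_{\gamma'}$ only produces polynomial prefactors in $R$ and $Z_{\gamma'}$, which are absorbed by the exponential), hence is a contraction for the same $R$. An induction on the order of differentiation produces uniformly bounded derivatives of every order on $K$, and Arzela-Ascoli applied to $\{\mathcal{X}^{(\nu)}\}$ and all their parameter derivatives gives $C^\infty$ convergence on compact subsets; this rigorously justifies differentiation under the integral sign. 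Piecewise holomorphy in $\zeta$ is automatic, since each iterate is a Cauchy-type integral off $\bigcup_{\gamma'}\ell_{\gamma'}$, and uniform limits of holomorphic functions are holomorphic.

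The delicate step, and in my view the main obstacle, is smoothness of $\mathcal{X}_\gamma$ in $u$ as $u$ crosses the wall of marginal stability $W$, because two or more rays $\ell_{\gamma'}$ collide and the naive contour in \eqref{inteq} is not continuous there. The remedy is the isomonodromic reformulation announced in the introduction and carried out in Section \ref{pent}: one fixes a small sector $V$ around the colliding rays, replaces the sum of jumps across the individual $\ell_{\gamma'}\subset V$ by the single wall-crossing factor $A_V(u)$, and rewrites \eqref{inteq} with $A_V$ prescribed as the jump on $\partial V$. Because the Kontsevich-Soibelman identity makes $A_V(u)$ a smooth (in fact parallel-transported) function of $u$ through $W$, the reformulated integral operator depends smoothly on $u$ in a full neighborhood of $W$, and the contraction and differentiation arguments above apply unchanged to yield a $C^\infty$ solution across the wall.
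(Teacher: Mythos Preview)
Your proposal is essentially the approach the paper announces (and defers to the companion article \cite{rhprob}): Banach contraction on a weighted space built around $\mathcal{X}^{\text{sf}}$, Arzel\`a--Ascoli to upgrade to $C^\infty$ in the parameters, and an isomonodromic reformulation at the wall so that the jump data vary smoothly in $u$. The only cosmetic difference is that the paper packages the wall-crossing step by choosing a single pair of \emph{admissible rays} $r,-r$ and concentrating all Stokes factors there as $S_\pm=\prod^{\curvearrowleft}_{\pm\gamma>0}\mathcal{K}_\gamma^{\Omega(\gamma;u)}$ (Section~\ref{solut}), rather than prescribing $A_V$ on the boundary of a sector; the Kontsevich--Soibelman identity then makes $S_\pm$ independent of which side of $W$ one is on, and the resulting solutions $\mathcal{Y}$ are the analytic continuations of your $\mathcal{X}$ across the individual BPS rays.
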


Since we're considering only the case $n=1$, $\Gamma$ is a rank-1 lattice over the Riemann surface $\mathcal{B}'$ and the discriminant locus $D$ where the torus fibers degenerate is a discrete subset of $\mathcal{B}'$. 

From this point on, we restrict our attention to the next nontrivial system, known as the Pentagon case \cite{notes}. Here $\mathcal{B} = \cone$ with 2 bad fibers which we can assume are at $u = -2, u = 2$ and $\mathcal{B}'$ is the twice-punctured plane. There is a wall of marginal stability where all $Z_\gamma$ are contained in the same line. This separates $\mathcal{B}$ in two domains $\mathcal{B}_\text{out}$ and a simply-connected $\mathcal{B}_\text{in}$. See Figure \ref{aplane}.

\begin{figure}[htbp]
	\centering
		\includegraphics[width=0.60\textwidth]{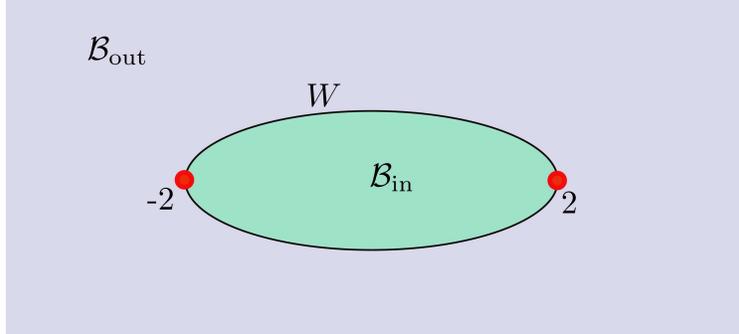}
	\caption{The wall $W$ in $\mathcal{B}$ for the Pentagon case}
	\label{aplane}
\end{figure}

On $\mathcal{B}_\text{in}$ we can trivialize $\Gamma$ and choose a basis $\{\gamma_1, \gamma_2\}$ with pairing $\left\langle \gamma_1, \gamma_2\right\rangle = 1$. This basis does not extend to a global basis for $\Gamma$ since it is not invariant under monodromy. However, the set $\{\gamma_1, \gamma_2, -\gamma_1, -\gamma_2, \gamma_1 + \gamma_2, -\gamma_1 - \gamma_2\}$ is indeed invariant so the following definition of $\Omega$ makes global sense:
  \begin{align*}
  \text{For $u \in \mathcal{B}_\text{in}$}, \Omega(\gamma; u) = & \left\{ \begin{array}{ll}
    1 & \text{for } \gamma \in \{ \gamma_1, \gamma_2, -\gamma_1, -\gamma_2\}\\
    0 & \text{otherwise}
  \end{array} \right. \\
 \text{For $u \in \mathcal{B}_\text{out}$} , \Omega(\gamma; u) = & \left\{ \begin{array}{ll}
    1 & \text{for } \gamma \in \{ \gamma_1, \gamma_2, -\gamma_1, -\gamma_2, \gamma_1 + \gamma_2, -\gamma_1 - \gamma_2\}\\
    0 & \text{otherwise}
  \end{array}  \right.
  \end{align*}
  
  
  
   The Pentagon case appears in the study of Hitchin systems with gauge group $\text{SU}(2)$. The extension of $\mathcal{M}'$ was previously obtained by hyperk\"{a}hler quotient methods in \cite{biquard}, but no explicit hyperk\"{a}hler metric was constructed. 

Once the $\{ \mathcal{X}_{\gamma_i} \}$ are obtained by Theorem \ref{smooth}, 
it is necessary to do an analytic continuation along $\mathcal{B}'$ for the particular $\mathcal{X}_{\gamma_i}$ for which $Z_{\gamma_i} \to 0$ as $u \to u_0 \in D$. Without loss of generality, we can assume there
is a local basis $\{\gamma_1, \gamma_{2}\}$ of $\Gamma$ such that $Z_{\gamma_2} \to 0$ in $D$. After that, an analysis of the possible divergence of $\mathcal{X}_\gamma$ as $u \to u_0$ shows the necessity of performing a gauge transformation on the torus coordinates of the fibers $\mathcal{M}_u$ that allows us to define an integral equation even at $u_0 \in D$. This series of transformations are defined in \eqref{newmp}, \eqref{outmp}, \eqref{outmp2} and \eqref{fingauge}, and constitute a new result that was not expected in \cite{gaiotto}. We basically deal with a family of boundary value problems for which the jump function vanishes at certain points and
singularities of certain kind appear as $u \to u_0$. As this is of independent interest, we leave the relevant results to \cite{rhprob} and we show that our solutions contain at worst branch singularities at 0 or $\infty$ in the $\zeta$-plane. As in the case of normal fibers, we can run a contraction argument to obtain Darboux coordinates even at the singular fibers and conclude:

\begin{theorem}\label{extbf}
Let $\{\gamma_{1}, \gamma_2\}$ be a local basis for $\Gamma$ in a small sector centered at $u_0 \in D$ such that $Z_{\gamma_2} \to 0$ as $u \to u_0 \in D$. For the Pentagon integrable system, the local function $\mathcal{X}_{\gamma_1}$ admits an analytic continuation $\widetilde{\mathcal{X}}_{\gamma_1}$ to a punctured disk centered at $u_0$ in $\mathcal{B}$. There exists a gauge transformation $\theta_1 \mapsto \widetilde{\theta}_1$ that extends the torus fibration $\mathcal{M}'$ to a manifold $\mathcal{M}$ that is locally, for each point in $D$, a (trivial) fibration over $\mathcal{B} \times S^{1}$ with fiber $S^1$ coordinatized by $\theta_1$ and with one fiber collapsed into a point. For $R > 0$ big enough, it is possible to extend $\widetilde{\mathcal{X}}_{\gamma_1}$ and $\mathcal{X}_{\gamma_2}$ to $\mathcal{M}$, still preserving the smooth properties as in Theorem \ref{smooth}.
\end{theorem}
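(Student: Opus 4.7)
The plan is to mimic the Ooguri-Vafa construction worked out in Section \ref{ov} in a sectorial neighborhood of $u_0\in D$ and then apply a contraction argument in the spirit of Theorem \ref{smooth} on the extended manifold. I would begin with the analytic continuation of $\mathcal{X}_{\gamma_1}$: Theorem \ref{smooth} supplies it as the unique solution of \eqref{inteq} on a sector around $u_0$ where the basis $\{\gamma_1,\gamma_2\}$ is defined. As $u$ traverses a loop around $u_0$, the ray $\ell_{\gamma_2}(u)=Z_{\gamma_2}\,\mathbb{R}_-$ winds once about the origin in the $\zeta$-plane because $Z_{\gamma_2}$ vanishes simply at $u_0$. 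Tracking the jumps of \eqref{inteq} ray by ray via the Kontsevich--Soibelman formula and the symplectomorphisms $\mathcal{K}_{\pm\gamma_2}$, one recovers precisely the Picard--Lefschetz shift $\gamma_1\mapsto \gamma_1+\gamma_2$, which defines $\widetilde{\mathcal{X}}_{\gamma_1}$ on the punctured disk around $u_0$.

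Next, the gauge transformation. The twisted character relation \eqref{xprop} together with Picard--Lefschetz forces $\theta_1\mapsto \theta_1+\theta_2-\pi$ upon circling $u_0$, so $\theta_1$ cannot serve as a coordinate across $u_0$. I would construct $\widetilde{\theta}_1=\theta_1+h(u,\theta_2,\zeta)$, with $h$ built from the $\zeta$-contour integrals of $\log(1-\mathcal{X}_{\pm\gamma_2})$ along the rays $\ell_{\pm\gamma_2}$, chosen so that the monodromy of $h$ cancels the shift in $\theta_1$. This is the direct analogue of the gauge change used in the Ooguri-Vafa model. The resulting $\widetilde{\theta}_1$ is single-valued and continuous on a full neighborhood of $u_0$. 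Gluing a circle bundle parametrized by $\widetilde{\theta}_1$ and collapsing the $\theta_2$-circle to a point over $u_0$ yields the local model $\mathcal{M}\to \mathcal{B}\times S^1$ asserted in the theorem, reproducing the pinched-torus picture of Figure \ref{pinch}.

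With $(\widetilde{\theta}_1,\theta_2,u)$ in hand, I would rewrite \eqref{inteq} for the pair $(\widetilde{\mathcal{X}}_{\gamma_1},\mathcal{X}_{\gamma_2})$. After the gauge transformation, the $\log(1-\mathcal{X}_{\gamma_2})$ term along $\ell_{\gamma_2}$ that previously carried the monodromy is absorbed into the semiflat piece, so the integrand admits a continuous extension up to $u_0$. The remaining $\gamma'\in\{\pm\gamma_1,\pm(\gamma_1+\gamma_2)\}$ still satisfy \eqref{support} uniformly on a closed neighborhood of $u_0$, giving Lipschitz constants of order $e^{-cR|Z_{\gamma'}|}$. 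A Banach fixed-point iteration on a space of continuous functions on this neighborhood, parallel to the one producing Theorem \ref{smooth}, then furnishes a unique solution for $R$ sufficiently large; smoothness in the torus parameters and in $u$ follows by the same Arzela-Ascoli compactness argument already used there.

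The main obstacle is the bookkeeping in this last step: the support property fails for $\gamma_2$ as $u\to u_0$, so the naive exponential decay in $R$ along $\ell_{\gamma_2}$ is lost, and the entire gain must come from the algebraic cancellation inherent in the gauge transformation. Verifying that the gauge-corrected kernel has only branch singularities at $\zeta=0,\infty$ and is smooth in $u$ up to $u_0$ through those branch points --- not merely continuous --- is the bulk of the technical work, and is the reason the detailed boundary-value analysis is relegated to the companion paper \cite{rhprob}.
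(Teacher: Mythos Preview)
Your overall architecture matches the paper's: analytic continuation via the Picard--Lefschetz cancellation, an Ooguri--Vafa--style gauge change on the fiber coordinate, and a contraction argument at $u_0$. Three points, however, are either understated or actually wrong in your sketch.

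First, a single gauge transformation is not enough. The paper performs \emph{two} successive changes of $\theta_m$. The first, $\theta_m\mapsto\theta'_m=\theta_m+\psi(a,\theta)$ (equations \eqref{newmp}--\eqref{outmp2}), absorbs the divergent pieces of the split integral \eqref{4ints}; one must check separately that $\psi_{\text{in}}$ and $\psi_{\text{out}}$ agree on the wall of marginal stability (Lemma~5.1), and that $Q(\theta_m)=\theta_m+\psi$ is a genuine diffeomorphism of $S^1$ (Theorem~\ref{homeps}). Even after this, the limit $\widetilde{\mathcal{X}}_m|_{a=0}$ still depends on the ray $\arg a=\text{const}$ along which $a\to 0$. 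A further lemma shows only the \emph{real} part of $\Upsilon_m$ carries this $\arg a$ dependence, and a second gauge shift $\theta'_m\mapsto\widetilde{\theta}_m$ (equation \eqref{fingauge}) removes it. Your proposal conflates these into one step and never addresses the $\arg a$ issue, which is precisely what prevents a well-defined limit at $u_0$.

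Second, your claim that ``smoothness in the torus parameters and in $u$ follows by the same Arzela--Ascoli compactness argument already used'' is incorrect. The paper states explicitly (opening of \S\ref{exderv}) that one can no longer bound the derivatives of $\widetilde{\mathcal{X}}_m$ uniformly in the iteration index $\nu$ near $a=0$, so Arzela--Ascoli fails. Instead, the smooth extension of $\log\mathcal{X}_m$ is obtained directly by differentiating the integral representation and showing, via asymptotic expansions of $\Upsilon_e$ at $\zeta'=0,\infty$ together with the elementary identity $(1-e^{i\Upsilon_e(0)})^{-1}+(1-e^{-i\Upsilon_e(0)})^{-1}=1$, that the apparently divergent $\log a$ terms cancel (Lemma~5.5). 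Your final paragraph gestures at this difficulty but the body of the argument still invokes the wrong mechanism.

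Finally, a minor slip: it is the $S^1$ fiber parametrized by $\widetilde{\theta}_1$ (the magnetic angle) that collapses to a point at $\theta_2=0$ over $u_0$, not the $\theta_2$-circle.
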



After we have the smooth extension of the $\{ \mathcal{X}_{\gamma_i} \}$ by Theorem \ref{extbf}, we can extend the holomorphic symplectic form $\varpi(\zeta)$ labeled by $\zeta \in \cpone$ as in \cite{hitchin} for all points except possibly one at the singular fiber. From $\varpi(\zeta)$ we can obtain the hyperk\"{a}hler metric $g$ and, in the case of the Pentagon, after a change of coordinates, we realize $g$ locally as the Taub-NUT metric plus smooth corrections, finishing the construction of $\mathcal{M}$ and its hyperk\"{a}hler metric. The following is the main theorem of the paper.

\begin{theorem}\label{smfrm}
For the Pentagon case, the extension $\mathcal{M}$ of the manifold $\mathcal{M}'$ constructed in Theorem \ref{extbf} admits, for $R$ large enough, a hyperk\"{a}hler metric $g$ obtained by extending the hyperk\"{a}hler metric on $\mathcal{M}'$ determined by the Darboux coordinates $\{ \mathcal{X}_{\gamma_i} \}$.
\end{theorem}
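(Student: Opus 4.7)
The plan is to verify the hypotheses of Hitchin's twistor construction for the family of $(2,0)$-forms
\[
\varpi(\zeta) = \frac{1}{8\pi^2 R}\langle d\log \mathcal{X}(\zeta) \wedge d\log \mathcal{X}(\zeta)\rangle
\]
built from the extended Darboux coordinates of Theorem \ref{extbf}, reducing the question at the singular fibers to a comparison with the Ooguri--Vafa model treated in Sections \ref{ov}--\ref{gmetric}.

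First, on $\mathcal{M}'$, I would check the twistor axioms directly from the four defining properties of the Riemann--Hilbert problem. Piecewise holomorphicity in $\zeta$ follows from (1); the jump $\mathcal{X}^+ = S_\ell^{-1} \circ \mathcal{X}^-$ in (2) is a composition of symplectomorphisms $\mathcal{K}_\gamma$ preserving the Poisson pairing $\langle\,,\,\rangle$, so $\varpi$ is continuous (hence holomorphic) across each ray $\ell$; the reality condition (3) gives the required behavior of $\varpi$ under $\zeta \mapsto -1/\bar\zeta$; and (4) combined with \eqref{xsfr} controls the $\zeta \to 0, \infty$ asymptotics, yielding the $\mathcal{O}(2)$ twisting and a Laurent expansion of the form $-\tfrac{i}{2\zeta}\omega_+ + \omega_3 - \tfrac{i\zeta}{2}\omega_-$. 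Positivity of $\omega_3$ for $R$ sufficiently large follows because the corrections to $\mathcal{X}^{\text{sf}}_\gamma$ are $O(e^{-2\pi R|Z_\gamma|})$ by the support property \eqref{support} and the exponential decay of the integrand in \eqref{inteq}.

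Next, to extend across $D = \{-2, 2\}$, I would work in a punctured disk around $u_0 \in D$ using the local basis $\{\gamma_1, \gamma_2\}$ with $Z_{\gamma_2} \to 0$ and the gauge transformation $\theta_1 \mapsto \widetilde\theta_1$ of Theorem \ref{extbf}. In these new coordinates the torus monodromy around $u_0$ is killed, so $d\log \widetilde{\mathcal{X}}_{\gamma_1}$ and $d\log \mathcal{X}_{\gamma_2}$ are single-valued on the punctured neighborhood, and $\varpi(\zeta)$ descends to a holomorphic $2$-form there. To handle the collapsed $S^1$ itself, I would split the sum in \eqref{inteq} into a ``vanishing cycle'' contribution from $\pm\gamma_2$ and a ``spectator'' contribution from the remaining $\gamma'$ with $\Omega(\gamma';u)\neq 0$. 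The vanishing-cycle piece coincides with the Ooguri--Vafa integral equation analyzed in Section \ref{ov}, whose solution was shown in Section \ref{gmetric} to produce, after precisely the gauge transformation used here, a Taub-NUT form $\varpi^{\text{TN}}(\zeta)$, which is smooth and nondegenerate at the pinch point. The spectator piece has $|Z_{\gamma'}|$ bounded below uniformly in a fixed neighborhood of $u_0$, so it and all its derivatives in $u, \theta, \zeta$ are $O(e^{-cR})$ and extend smoothly to $D$.

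Combining these steps, $\varpi(\zeta) = \varpi^{\text{TN}}(\zeta) + \varpi^{\text{corr}}(\zeta)$ near each $u_0 \in D$, with $\varpi^{\text{corr}}$ a smooth $\zeta$-family on $\mathcal{M}$ which is small for $R$ large. Hitchin's twistor construction then yields a hyperkähler metric $g$ on all of $\mathcal{M}$ extending the one on $\mathcal{M}'$, and positivity of the induced $(1,1)$-form for $\zeta \in S^1 \subset \cpone$ is inherited from Taub-NUT once $R$ is chosen large enough to dominate $\varpi^{\text{corr}}$. The main obstacle I anticipate is the smooth cancellation of the apparent singularities coming from $\log \mathcal{X}_{\gamma_2}$ in $\varpi$: even after the gauge change, the piece $d\log \widetilde{\mathcal{X}}_{\gamma_1} \wedge d\log \mathcal{X}_{\gamma_2}$ involves a coordinate that vanishes on the collapsed circle, and one must check that the Picard--Lefschetz monodromy \eqref{piclf} and the precise asymptotic behavior of $\widetilde{\mathcal{X}}_{\gamma_1}$ furnished by Theorem \ref{extbf} conspire exactly as in Section \ref{gmetric} so that the potentially singular factors cancel in the wedge product; this is where the explicit form of the gauge transformations \eqref{newmp}--\eqref{fingauge} must be used in an essential way.
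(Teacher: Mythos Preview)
Your overall strategy---reduce to the Ooguri--Vafa/Taub-NUT model near each point of $D$ and treat everything else as an exponentially small perturbation---is exactly the paper's route. But the way you propose to carry out the reduction has a gap.

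You say you will ``split the sum in \eqref{inteq} into a vanishing-cycle contribution from $\pm\gamma_2$ and a spectator contribution,'' and that ``the vanishing-cycle piece coincides with the Ooguri--Vafa integral equation.'' It does not. In the Ooguri--Vafa case $\mathcal{X}_e=\mathcal{X}_e^{\text{sf}}$ exactly, because $\Omega(\pm\gamma_m)=0$ there; in the Pentagon, $\mathcal{X}_e$ itself receives corrections from the $\ell_{\pm m}$ (and, outside the wall, $\ell_{\pm(e+m)}$) integrals, so the ``vanishing-cycle'' integral for $\mathcal{X}_m$ is the Ooguri--Vafa integral with the \emph{full} Pentagon $\mathcal{X}_e$ inserted, not $\mathcal{X}_e^{\text{sf}}$. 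The coupling is the whole difficulty. The paper resolves this not by splitting the sum but by controlling the iteration: Lemma~\ref{estbadfib} shows $\Upsilon_m^{(1)}=\Upsilon^{\text{OV}}$ and then $\Upsilon_e^{(\nu+1)}-\Upsilon_e^{(\nu)},\ \Upsilon_m^{(\nu+1)}-\Upsilon_m^{(\nu)}=O(e^{-2\pi\nu R|Z_m|})$ at $a=0$, via saddle-point analysis on the $\ell_{\pm m}$ integrals. That is what justifies writing $g=g_{\text{OV}}+(\text{smooth, exponentially small})$; your spectator bound alone does not, because it ignores the feedback into $\mathcal{X}_e$.

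A second point you pass over too quickly: at $a=0$ the exponential decay of $\mathcal{X}_e^{\text{sf}}$ along $\ell_{\pm e}$ is gone, so the asymptotic condition \ref{asymptotic} no longer automatically gives the $\mathcal{O}(2)$ pole structure of $\varpi(\zeta)$. The paper checks this by hand: it writes $\dfrac{d\mathcal{X}_e}{\mathcal{X}_e}\wedge\dfrac{d\mathcal{X}_m}{\mathcal{X}_m}=\dfrac{d\mathcal{X}_e}{\mathcal{X}_e}\wedge\bigl(\dfrac{d\mathcal{X}_m^{\text{sf}}}{\mathcal{X}_m^{\text{sf}}}+\mathcal{I}_\pm\bigr)$, expands $\dfrac{d\mathcal{X}_e(\zeta)}{\mathcal{X}_e(\zeta)}\wedge\dfrac{d\mathcal{X}_e(\zeta')}{\mathcal{X}_e(\zeta')}$ inside the correction integrals, and verifies that after the kernels simplify the only surviving $\zeta$-dependence is a factor of $\zeta$ or $1/\zeta$. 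This, together with the smooth extension of $d\log\mathcal{X}_m$ proved in \S\ref{exderv} (where the $\log a$ divergences are shown to cancel exactly against the integral terms, generalizing \eqref{cancel}), is what actually closes the argument. Your last paragraph correctly anticipates that this cancellation is the crux, but you should be aware that it is carried out by direct computation rather than inferred from the gauge transformation alone.
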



\section{The Ooguri-Vafa Case}\label{ov}

\subsection{Classical Case}\label{clasov}

We start with one of the simplest cases, known as the Ooguri-Vafa case, first treated in \cite{cecotti}. To see where this case comes from, recall that by the SYZ picture of K3 surfaces \cite{gross}, any K3 surface $\mathcal{M}$ is a hyperk\"{a}hler manifold. In one of its complex structures (say $J^{(\zeta = 0)}$) is elliptically fibered, with base manifold $\mathcal{B} = \cpone$ and generic fiber a compact complex torus. There are a total of 24 singular fibers, although the total space is smooth. See Figure \ref{k3}.

	\begin{figure}[htbp]
	\centering
		\includegraphics[width=0.40\textwidth]{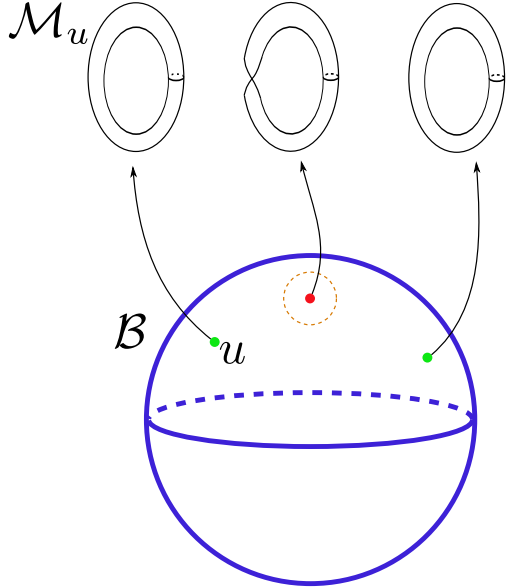}
	\caption{A K3 surface $\mathcal{M}$ as an elliptic fibration}
	\label{k3}
\end{figure}

Gross and Wilson \cite{gross2} constructed a hyperk\"{a}hler metric $g$ on a K3 surface by gluing in the Ooguri-Vafa metric constructed in \cite{oovf} with a standard metric $g^{\text{sf}}$ away from the degenerate fiber. Thus, this simple case can be regarded as a local model for K3 surfaces.
 
We have a fibration over the open unit disk $\mathcal{B} := \{a \in \cone : |a| < 1 \}$. At the locus $D : = \{ a = 0 \}$ (in the literature this is also called the \textit{discriminant locus}), the fibers degenerate into a nodal torus. Define $\mathcal{B}'$ as $\mathcal{B} \backslash D$, the punctured unit disk. On $\mathcal{B}'$ there exists a local system $\Gamma$ of rank-2 lattices with basis $(\gamma_m, \gamma_e)$ and skew-symmetric pairing defined by $\left\langle \gamma_m, \gamma_e \right\rangle = 1$. The monodromy of $\Gamma$ around $a = 0$ is $\gamma_e \mapsto \gamma_e, \gamma_m \mapsto \gamma_m + \gamma_e$. We also have functions $Z_{\gamma_e}(a) = a, Z_{\gamma_m}(a) = \frac{a}{2\pi i }( \log a - 1)$. On $\mathcal{B}'$ we have local coordinates $(\theta_m, \theta_e)$ for the torus fibers with monodromy $\theta_e \mapsto \theta_e, \theta_m \mapsto \theta_m + \theta_e - \pi$. Finally, the integer-valued function $\Omega$ in $\Gamma$ is here: $\Omega(\pm \gamma_e, a) = 1$ and $\Omega(\gamma, a) = 0$ for any other $\gamma \in \Gamma_a$. There is no wall of marginal stability in this case.

 We call this the ``classical Ooguri-Vafa'' case as it is the one appearing in \cite{oovf} already mentioned at the beginning of this section. In the next section, we'll generalize this case by adding a function $f(a)$ to the definition of $Z_{\gamma_m}$.

Let
\begin{equation}\label{xesf}
 \mathcal{X}^{\text{sf}}_\gamma(\zeta, a) := \exp\left( \pi R \zeta^{-1} Z_\gamma(a) + i\theta_\gamma + \pi R \zeta \overline{Z_\gamma(a)}\right)
\end{equation}
These functions receive corrections defined as in \cite{gaiotto}. We are only interested in the pair $(\mathcal{X}_m, \mathcal{X}_e)$ which will constitute our desired Darboux coordinates for the holomorphic symplectic form $\varpi$. The fact that $\Omega(\gamma_m, a) = 0$ gives that $\mathcal{X}_e = \mathcal{X}^{\text{sf}}_e$. As $a \to 0$, $Z_{\gamma_e}$ and $Z_{\gamma_m}$ approach 0. Thus $\mathcal{X}_e|_{a = 0} = e^{i\theta_e}$. Since $\mathcal{X}_e = \mathcal{X}^{\text{sf}}_e$ the actual $\mathcal{X}_m$ is obtained after only 1 iteration of \eqref{recurs}. For each $a \in \mathcal{B}'$, let $\ell_+$ be the ray in the $\zeta$-plane defined by $\{\zeta : a/\zeta \in \rone_- \}$. Similarly, $\ell_- : = \{\zeta : a/\zeta \in \rone_+\}$.

Let
\begin{equation}\label{defxm}
\mathcal{X}_m = \mathcal{X}^{\text{sf}}_m \exp \left[ \frac{i}{4\pi} \int_{\ell_+} \frac{d\zeta'}{\zeta'} \frac{\zeta' +
 \zeta}{\zeta' - \zeta} \log[1 - \mathcal{X}_e(\zeta')] -  \frac{i}{4\pi} \int_{\ell_-} \frac{d\zeta'}{\zeta'} \frac{\zeta' +
 \zeta}{\zeta' - \zeta} \log[1 - \mathcal{X}_e(\zeta')^{-1}] \right].
\end{equation}
For convenience, from this point on we assume $a$ is of the form  $sb$, where $s$ is a positive number, $b$ is fixed and $|b| = 1$. Moreover, in $\ell_+$, $\zeta' = -tb$, for $t \in (0, \infty)$, and a similar parametrization holds in $\ell_-$.
\begin{lemma}
For fixed $b$, $\mathcal{X}_m$ as in \eqref{defxm} has a limit as $|a| \to 0$.
\end{lemma}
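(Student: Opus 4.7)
The plan is to decompose $\log \mathcal{X}_m = \log \mathcal{X}_m^{\text{sf}} + E(s)$, where $E(s)$ is the bracketed integral expression in \eqref{defxm}, with $s = |a|$ and $b = a/|a|$ fixed, and show each piece has a limit as $s \to 0^+$. The prefactor is easy: since $Z_{\gamma_m}(sb) = \frac{sb}{2\pi i}(\log s + \log b - 1) \to 0$ as $s \to 0^+$ (using $s\log s \to 0$), one has $\mathcal{X}_m^{\text{sf}}(\zeta, sb) \to e^{i\theta_m}$.

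For $E(s)$, I would parametrize $\zeta' = -bt$ on $\ell_+$ and $\zeta' = bt$ on $\ell_-$ with $t \in (0,\infty)$, then set $t = e^\tau$. With $u := \pi R s$, this rewrites the two integrals as
\begin{equation*}
I_\pm(s) = \int_{-\infty}^\infty f_\pm(\tau)\,\log\!\bigl[1 - e^{\pm i\theta_e}e^{-2u\cosh\tau}\bigr]\,d\tau,
\end{equation*}
where $f_\pm(\tau)$ are bounded rational functions of $e^\tau$ with limits $\pm 1$ as $\tau \to \pm\infty$. The crucial observation is that the log factor is even in $\tau$, so only the even parts $f_\pm^{\text{e}}$ of $f_\pm$ contribute. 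A short computation gives $f_\pm^{\text{e}}(\tau) = (b^2-\zeta^2)/(b^2 \pm 2b\zeta\cosh\tau + \zeta^2)$, which decays like $1/\cosh\tau$ and is therefore integrable on $\mathbb{R}$.

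To pass to the limit $s \to 0$, I invoke dominated convergence. For $\theta_e \notin 2\pi\mathbb{Z}$, the elementary bound $\inf_{\rho \in [0,1]}|1 - \rho e^{i\theta_e}| > 0$ yields a uniform $L^\infty$ estimate on $|\log[1 - e^{\pm i\theta_e}e^{-2u\cosh\tau}]|$ in both $\tau$ and $u \in [0,U]$, so $|f_\pm^{\text{e}}|$ serves as an integrable envelope. Since the log factor tends pointwise to $\log[1 - e^{\pm i\theta_e}]$, we conclude
\begin{equation*}
I_\pm(s) \longrightarrow \log\!\bigl[1 - e^{\pm i\theta_e}\bigr]\int_{-\infty}^\infty f_\pm^{\text{e}}(\tau)\,d\tau,
\end{equation*}
a finite limit computable in closed form by partial fractions in $y = e^\tau$ (yielding, for instance, $2\log(b/\zeta)$ in the $+$ case). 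Combined with the first step, this gives the existence of $\lim_{s \to 0^+} \mathcal{X}_m$.

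The main obstacle is the exceptional locus $\theta_e \in 2\pi\mathbb{Z}$: there the constant $\log[1 - e^{i\theta_e}]$ diverges and the dominated-convergence envelope fails. This is not a mere technical nuisance but a geometric feature, as $\theta_e = 0$ on the $a = 0$ fiber is precisely the node of the degenerate torus, where $\mathcal{X}_m$ is expected to blow up. Away from this one point of the singular torus, the estimates above give the lemma cleanly; behavior at the node is what necessitates the gauge transformation introduced in later sections.
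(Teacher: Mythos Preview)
Your argument is correct and is essentially the paper's proof in a different parametrization: your substitution $t=e^\tau$ turns the paper's $t\leftrightarrow 1/t$ cancellation (splitting at $|\zeta'|=1$ and pairing the two halves) into the evenness of the log factor, and your integrable even part $f_\pm^{\mathrm e}$ is exactly what remains after the paper's pairing step (their kernels $\frac{2}{\zeta'-\zeta}$ on $[0,-b]$ and $\frac{2}{\zeta'-\zeta}-\frac{2}{\zeta'}$ on $[-b,-b\infty]$). Both then finish by dominated convergence for $\theta_e\neq 0$; the paper additionally records that $\mathcal{X}_m\to 0$ when $\theta_e=0$, which you correctly flag as the nodal point handled later.
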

\begin{proof}
Writing $\dfrac{\zeta' + \zeta}{\zeta'(\zeta' - \zeta)} = \dfrac{-1}{\zeta'} + \dfrac{2}{\zeta' - \zeta}$, we want to find the limit as $a \to 0$ of
\begin{align}
& \int_{\ell_+} \left\{ \dfrac{-1}{\zeta'} + \dfrac{2}{\zeta' - \zeta} \right\} \log[1 - \exp(\pi Ra/\zeta' + i\theta_e + \pi R\zeta' \bar{a})] d\zeta' \notag\\
& - \int_{\ell_-} \left\{ \dfrac{-1}{\zeta'} + \dfrac{2}{\zeta' - \zeta} \right\} \log[1 - \exp(-\pi Ra/\zeta' - i\theta_e - \pi R\zeta' \bar{a})] d\zeta' \label{integs}.
\end{align}
For simplicity, we'll focus in the first integral only, the second one can be handled similarly. Rewrite:
\begin{align}
& \int_{\ell_+} \left\{ \dfrac{-1}{\zeta'} + \dfrac{2}{\zeta' - \zeta} \right\} \log[1 - \exp(\pi Ra/\zeta' + i\theta_e + \pi R\zeta' \bar{a})] d\zeta' \notag\\
& = \int_{0}^{-b} \left\{ \dfrac{-1}{\zeta'} + \dfrac{2}{\zeta' - \zeta} \right\} \log[1 - \exp(\pi Ra/\zeta' + i\theta_e + \pi R\zeta' \bar{a})] d\zeta'  \notag\\
& + \int_{-b}^{-b\infty} \left\{ \dfrac{-1}{\zeta'} + \dfrac{2}{\zeta' - \zeta} \right\} \log[1 - \exp(\pi Ra/\zeta' + i\theta_e + \pi R\zeta' \bar{a})] d\zeta' \notag\\
& = \int_{0}^{-b} \left\{ \dfrac{-1}{\zeta'} + \dfrac{2}{\zeta' - \zeta} \right\} \log[1 - \exp(\pi Ra/\zeta' + i\theta_e + \pi R\zeta' \bar{a})] d\zeta' \notag\\
& + \int_{-b}^{-b\infty} \left\{ \dfrac{-1}{\zeta'} + \frac{2}{\zeta'} + \dfrac{2}{\zeta' - \zeta} - \frac{2}{\zeta'} \right\} \log[1 - \exp(\pi Ra/\zeta' + i\theta_e + \pi R\zeta' \bar{a})] d\zeta' \notag\\
& = \int_{0}^{-b} \dfrac{-1}{\zeta'} \log[1 - \exp(\pi Ra/\zeta' + i\theta_e + \pi R\zeta' \bar{a})] d\zeta' \notag\\
& + \int_{-b}^{-b\infty} \dfrac{1}{\zeta'} \log[1 - \exp(\pi Ra/\zeta' + i\theta_e + \pi R\zeta' \bar{a})] d\zeta' \notag\\
& + \int_{0}^{-b} \dfrac{2}{\zeta' - \zeta} \log[1 - \exp(\pi Ra/\zeta' + i\theta_e + \pi R\zeta' \bar{a})] d\zeta' \notag\\
& + \int_{-b}^{-b\infty} \left\{ \dfrac{2}{\zeta' - \zeta} - \frac{2}{\zeta'} \right\} \log[1 - \exp(\pi Ra/\zeta' + i\theta_e + \pi R\zeta' \bar{a})] d\zeta' \label{4sums}
\end{align}

\noindent Observe that
\begin{align*}
& \int_{0}^{-b} \dfrac{-1}{\zeta'} \log[1 - \exp(\pi Ra/\zeta' + i\theta_e + \pi R\zeta' \bar{a})] d\zeta'\\
& = -\int_0^1 \frac{1}{t} \log[1 - \exp(-\pi Rs(t + 1/t))] dt\\
\intertext{and after a change of variables $\tilde{t} = 1/t$, we get}
& = -\int_1^\infty \frac{1}{\tilde{t}} \log[1 - \exp(-\pi Rs(\tilde{t} + 1/\tilde{t}))] d\tilde{t}\\
& = -\int_{-b}^{-b\infty} \dfrac{1}{\zeta'} \log[1 - \exp(\pi Ra/\zeta' + i\theta_e + \pi R\zeta' \bar{a})] d\zeta'.
\end{align*}
Thus, (\ref{4sums}) reduces to
\begin{align}
& \int_{0}^{-b} \dfrac{2}{\zeta' - \zeta} \log[1 - \exp(\pi Ra/\zeta' + i\theta_e + \pi R\zeta' \bar{a})] d\zeta' \notag\\
& + \int_{-b}^{-b\infty} \left\{ \dfrac{2}{\zeta' - \zeta} - \frac{2}{\zeta'} \right\} \log[1 - \exp(\pi Ra/\zeta' + i\theta_e + \pi R\zeta' \bar{a})] d\zeta' \label{2sums}.
\end{align}
If $\theta_e = 0$, (\ref{integs}) diverges to $-\infty$, in which case $\mathcal{X}_m = 0$. Otherwise, $\log[1 - \exp(\pi Ra/\zeta' + i\theta_e + \pi R\zeta' \bar{a})]$ is bounded away from 0. Consequently,
$|\log[1 - \exp(\pi Ra/\zeta' + i\theta_e + \pi R\zeta' \bar{a})]| < C < \infty$ in $\ell_+$.
As $a \to 0$, the integrals are dominated by
\[ \int_0^{-b} \dfrac{2C}{|\zeta' - \zeta|} |d\zeta'| + \int_{-b}^{-b\infty} \frac{C|\zeta/b|}{|\zeta'(\zeta' - \zeta)|} |d\zeta'| < \infty \]
if $\theta_e \neq 0$. Hence we can interchange the limit and the integral in (\ref{2sums}) and obtain that, as $a \to 0$, this reduces to

\begin{align}
 & 2\log(1 - e^{i \theta_e})\left[\int_{0}^{-b} \frac{d\zeta'}{\zeta' - \zeta} + \int_{-b}^{-b\infty} d\zeta' \left\{ \frac{1}{\zeta' - \zeta} - \frac{1}{\zeta'}\right\} \right] \notag\\
 & = 2\log(1 - e^{i \theta_e})[F(-b) + G(-b)], \label{odes}
 \end{align}
where
\[ F(z) := \log\left( 1 - \dfrac{z}{\zeta}\right), G(z) := \log\left( 1 - \dfrac{\zeta}{z}\right) \]
 are the (unique) holomorphic solutions in the simply connected domain $U := \cone - \{z : z/\zeta \in \rone_+\}$ to the ODEs
\[ F'(z) = \frac{1}{z - \zeta}, F(0) = 0 \hspace{10 mm} G'(z) = \frac{1}{z - \zeta} - \frac{1}{z}, \lim_{z \to \infty} G(z) = 0. \]
This forces us to rewrite (\ref{odes}) uniquely as
\begin{equation}\label{prinbran}
2\log(1 - e^{i \theta_e})\left[\log\left(1 + \frac{b}{\zeta}\right) - \log\left(1 + \frac{\zeta}{b}\right)\right]
\end{equation}
Here $\log$ denotes the principal branch of the log in both cases, and the equation makes sense for $\{b \in \cone : b \notin \ell_+ \}$ (recall that by construction, we have the additional datum $|b| = 1$). We want to conclude that
\begin{equation}\label{logfusion}
\log(1 + b/\zeta) - \log(1 + \zeta/b) = \log(b/\zeta),
\end{equation}
still using the principal branch of the log. To see this, define $H(z)$ as $F(z) - G(z) - \log(-z/\zeta)$. This is an analytic function on $U$ and clearly $H'(z) \equiv 0$. Thus $H$ is constant in $U$. It is easy to show that the identity holds for a suitable choice of $z$ (for example, if $\zeta$ is not real, choose $z = 1$) and by the above, it holds on all of $U$; in particular, for $z = -b$.

All the arguments so far can be repeated to the ray $\ell_-$ to get the final form of (\ref{integs}):
 \begin{equation}\label{fextov}
  2\left\{\log\left[\frac{b}{\zeta}\right]\log(1 - e^{i\theta_e})
   -\log\left[\frac{- b}{\zeta}\right]\log(1 - e^{-i\theta_e}) \right\}, \hspace{3 mm} \theta_e \neq 0.
  \end{equation}
   This yields that (\ref{defxm}) simplifies to:
\begin{align}
\mathcal{X}_m & = \mathcal{X}^{\text{sf}}_m \exp\left( \frac{i}{2\pi} \left\{ \log\left[\frac{b}{\zeta}\right]\log(1 - e^{i\theta_e})
   -\log\left[\frac{- b}{\zeta}\right]\log(1 - e^{-i\theta_e})\right\} \right) \notag\\
  & = \mathcal{X}^{\text{sf}}_m \exp\left( \frac{i}{2\pi} \left\{ \log\left[\frac{a}{|a|\zeta}\right]\log(1 - e^{i\theta_e})
   -\log\left[\frac{- a}{|a|\zeta}\right]\log(1 - e^{-i\theta_e})\right\} \right)  \label{xmnice}
\end{align}
in the limiting case $a \to 0$.
\end{proof}

To obtain a function that is continuous everywhere and independent of $\arg a$, define regions I, II and III in the $a$-plane as follows: $\mathcal{X}^{\text{sf}}_m$ has a fixed cut in the negative real axis, both in the $\zeta$-plane and the $a$-plane. Assuming for the moment that $\arg \zeta \in (0,\pi)$, define region I as the half plane $\{a \in \cone : \text{Im}\left( a/\zeta\right) < 0 \}$. Region II is that enclosed by the $\ell_-$ ray and the cut in the negative real axis, and region III is the remaining domain so that as we travel counterclockwise we traverse regions I, II and III in this order (see Figure \ref{3reg}).

\begin{figure}[htbp]
	\centering
		\includegraphics[width=0.50\textwidth]{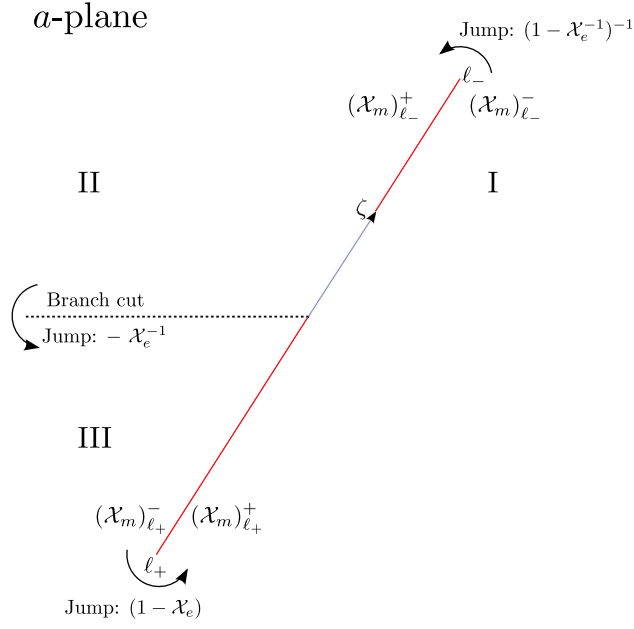}
	\caption{The three regions in the $a$-plane, as we traverse them counterclockwise}
	\label{3reg}
\end{figure}

For $a \neq 0$, Gaiotto, Moore and Neitzke \cite{gaiotto} proved that $\mathcal{X}_m$ has a continuous extension to the punctured disk of the form:

\begin{equation}\label{regmod}
 \widetilde{\mathcal{X}}_m = \left\{ \begin{array}{ll}
	   \mathcal{X}_m & \text{in region I}\\
	   (1 - \mathcal{X}^{-1}_e) \mathcal{X}_m & \text{in region II}\\
	   - \mathcal{X}_e (1 - \mathcal{X}^{-1}_e) \mathcal{X}_m = (1 - \mathcal{X}_e)\mathcal{X}_m  & \text{in region III}
	   \end{array} \right. 
\end{equation}

If we regard $\mathcal{M}'$ as a $S^1$-bundle over $\mathcal{B}' \times S^1$, with the fiber parametrized by $\theta_m$, then we seek to extend $\mathcal{M}'$ to a manifold $\mathcal{M}$ by gluing to $\mathcal{M}'$ another $S^1$-bundle over $D \times (0,2\pi)$, for $D$ a small open disk around $a = 0$, and $\theta_e \in (0,2\pi)$. The $S^1$-fiber is parametrized by a different coordinate $\theta'_m$ where the Darboux coordinate $\widetilde{\mathcal{X}}_m$ can be extended to $\mathcal{M}$. This is the content of the next theorem. 

\begin{theorem}\label{mprtom}
$\mathcal{M}'$ can be extended to a manifold $\mathcal{M}$ where the torus fibers over $\mathcal{B}'$ degenerate at $D = \{a = 0\}$ and $\widetilde{\mathcal{X}}_m $ can be extended to $D$, independent of the value of $\arg a$.
\end{theorem}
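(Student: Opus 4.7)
My plan is to extend $\widetilde{\mathcal{X}}_m$ from the branched cover of $\mathcal{B}'$ to the full punctured disk, and then across the singular fiber at $a=0$, by constructing a new torus coordinate $\theta'_m$ that absorbs the multi-valued contributions. I would organize the argument in three steps: (i) verify that the piecewise formula \eqref{regmod} glues into a function continuous across the rays $\ell_\pm$ (interpreted as subsets of the $a$-plane for fixed $\zeta$); (ii) compute the monodromy of this function as $a$ circles the origin once; and (iii) identify a gauge transformation $\theta_m \mapsto \theta'_m$ that cancels that monodromy, so that $\widetilde{\mathcal{X}}_m$ descends to a well-defined function on the disk bundle over $\mathcal{B}\times S^1_{\theta_e}$.

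For step (i), I would apply the Sokhotski--Plemelj jump relations to the two Cauchy-type integrals in \eqref{defxm}: the kernel $\tfrac{1}{\zeta'}\cdot\tfrac{\zeta'+\zeta}{\zeta'-\zeta}$ produces jumps $-2\log(1-\mathcal{X}_e)$ and $2\log(1-\mathcal{X}_e^{-1})$ as $\zeta$ crosses $\ell_\pm$, which after exponentiation become exactly the factors $(1-\mathcal{X}_e^{-1})$ and $(1-\mathcal{X}_e)$ inserted in the definition of $\widetilde{\mathcal{X}}_m$ on regions II and III. For step (ii), the monodromy $a\mapsto e^{2\pi i}a$ sends $Z_{\gamma_m}\mapsto Z_{\gamma_m}+Z_{\gamma_e}$ and $\theta_m\mapsto\theta_m+\theta_e-\pi$, giving $\mathcal{X}_m^{\text{sf}}\mapsto -\mathcal{X}_e\mathcal{X}_m^{\text{sf}}$; meanwhile the rays $\ell_\pm$ in the $\zeta$-plane return to themselves and the correction integrals are invariant. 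Tracking the regional prefactors across the branch cut of $\mathcal{X}_m^{\text{sf}}$ on the negative real axis yields the net monodromy of $\widetilde{\mathcal{X}}_m$, which should take the form of a shift by a single element in the torus fiber compatible with Picard--Lefschetz.

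For step (iii), inspection of the limiting expression \eqref{xmnice} shows that $\arg a$ enters only through the combinations $\log(a/(|a|\zeta))$ and $\log(-a/(|a|\zeta))$, and that these combine with the factor $e^{i\theta_m}$ inside $\mathcal{X}_m^{\text{sf}}$. I would therefore define
\[
\theta'_m = \theta_m + \frac{\arg a}{2\pi}(\theta_e - \pi) + c(a),
\]
with $c(a)$ a bounded correction chosen so that $\theta'_m$ is invariant under $a\mapsto e^{2\pi i}a$ and so that, when $\widetilde{\mathcal{X}}_m$ is rewritten in the variables $(\theta'_m, \theta_e, |a|, \zeta)$, no residual $\arg a$ dependence remains. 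The extension $\mathcal{M}$ is then built by gluing the trivial $S^1_{\theta'_m}$-bundle over $\{|a|<\epsilon\}\times(0,2\pi)_{\theta_e}$ onto $\mathcal{M}'$ via the transition $\theta_m \leftrightarrow \theta'_m$, and $\widetilde{\mathcal{X}}_m$ extends across the fiber $\{a=0\}$ using the now $\arg a$-independent limit. The principal obstacle will be step (iii): verifying that after the gauge transformation, not only the limit formula but the full Cauchy-integral expression \eqref{defxm} is single-valued in $\theta'_m$ and converges uniformly as $|a|\to 0$, so that the limit furnishes a continuous extension across the entire singular fiber regardless of the direction of approach.
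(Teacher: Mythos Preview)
Your proposal is essentially correct and reaches the same gauge transformation as the paper, but the organization differs in a way worth noting. The paper does not separately compute the monodromy of $\widetilde{\mathcal{X}}_m$ around $a=0$ as an abstract step; instead it works directly with the explicit limit formula \eqref{xmnice} (already established in the preceding lemma) and, region by region, applies the elementary logarithm identities
\[
\log(1-e^{i\theta_e}) = \log(1-e^{-i\theta_e}) + i(\theta_e-\pi),\qquad
\log\!\left[\tfrac{-a}{|a|\zeta}\right] = \log\!\left[\tfrac{a}{|a|\zeta}\right] \pm i\pi,\qquad
\log(a/\zeta) = \log a - \log\zeta\ (+2\pi i)
\]
to show that in regions I, II, III the expression for $\widetilde{\mathcal{X}}_m|_{a=0}$ collapses to a \emph{single} formula. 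The gauge shift $\theta'_m = \theta_m - \tfrac{(\theta_e-\pi)\varphi}{2\pi}$ is then read off from that formula (note the sign, opposite to yours), rather than deduced from a monodromy computation. The paper then repeats the region analysis for the cases $\arg\zeta<0$, $\arg\zeta=0$, and $\arg\zeta=\pi$ to confirm that the same closed form \eqref{reblow} results in every case.

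Your monodromy-based route and the paper's region-by-region log-identity route are equivalent reorganizations of the same content: what you call ``tracking the regional prefactors across the branch cut'' is precisely what the identities \eqref{regions} and \eqref{breaklog} encode. Two small points: your step~(i) is not part of this theorem's proof---the continuity of $\widetilde{\mathcal{X}}_m$ across $\ell_\pm$ for $a\neq 0$ is taken as input from \cite{gaiotto} just before the theorem statement---and your extra correction $c(a)$ is unnecessary (the paper's transformation has none). Finally, the ``principal obstacle'' you flag, uniform convergence of the full Cauchy integral as $|a|\to 0$, is handled in the lemma \emph{preceding} this theorem; the theorem itself only needs to remove the residual $\arg a$ dependence from the already-computed limit.
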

\begin{proof}
We'll use the following identities:
\begin{align}
\log(1 - e^{i\theta_e}) & = \log(1 - e^{-i\theta_e}) +i(\theta_e - \pi), \hspace{5mm} \text{for } \theta_e \in (0, 2\pi) \label{logs}\\
 \log\left[\frac{-a}{|a|\zeta}\right] & = \left\{ \begin{array}{ll}
     \log\left[\frac{a}{|a|\zeta}\right] + i\pi & \text{in region I}\\
	   \log\left[\frac{a}{|a|\zeta}\right] - i\pi & \text{in regions II and III}
	   \end{array} \right. \label{regions}\\
	   \log [a/\zeta] & = \left\{ \begin{array}{ll}
     \log a - \log \zeta & \text{in regions I and II}\\
	   \log a - \log \zeta + 2\pi i & \text{in region III}
	   \end{array} \right. \label{breaklog}
\end{align}
to obtain a formula for $\widetilde{\mathcal{X}}_m$ at $a = 0$ independent of the region. Formula \eqref{breaklog} can be proved with an argument analogous to that used for the proof of \eqref{logfusion}. Starting with region I, by \eqref{xmnice}, (\ref{regmod}), (\ref{logs}) and (\ref{regions}):
\begin{align*}
\widetilde{\mathcal{X}}_m & = \exp\left[ i\theta_m - \frac{1}{2\pi}  (\theta_e - \pi) \log\left[\frac{a}{|a|\zeta}\right] + \frac{1}{2} \log\left(1 - e^{-i\theta_e}\right)  \right] \hspace{5 mm} \text{in region I.}\\
\intertext{By \eqref{breaklog},}
& = \exp\left[ i\theta_m - \frac{1}{2\pi}  (\theta_e - \pi) \log\left[\frac{a}{|a|}\right] + \frac{\theta_e - \pi}{2\pi}\log \zeta + \frac{1}{2} \log\left(1 - e^{-i\theta_e}\right) \right]
\end{align*}
In region II, by our formulas above, we get
\begin{align*}
\widetilde{\mathcal{X}}_m & = \exp\left[i\theta_m - \frac{1}{2\pi}  (\theta_e - \pi) \log\left[\frac{a}{|a|\zeta}\right] - \frac{1}{2} \log \left(1 - e^{-i\theta_e}\right)  \right]\left(1 - e^{-i\theta_e}\right) \\
& =  \exp\left[i\theta_m -  \frac{1}{2\pi}  (\theta_e - \pi) \log\left[\frac{a}{|a|\zeta}\right] - \frac{1}{2} \log\left(1 - e^{-i\theta_e}\right) + \log\left(1 - e^{-i\theta_e}\right)  \right]\\
 & = \exp\left[ i\theta_m - \frac{1}{2\pi}  (\theta_e - \pi) \log\left[\frac{a}{|a|}\right] + \frac{\theta_e - \pi}{2\pi}\log \zeta + \frac{1}{2} \log\left(1 - e^{-i\theta_e}\right) \right]  \text{in region II.}
\end{align*}
Finally, in region III, and making use of \eqref{logs}, \eqref{regions}, \eqref{breaklog}:
\begin{align}
\widetilde{\mathcal{X}}_m & = \exp\left[i\theta_m - \frac{1}{2\pi}  (\theta_e - \pi) \log\left[\frac{a}{|a|\zeta}\right]  - \frac{1}{2} \log \left(1 - e^{-i\theta_e}\right)  \right]\left(1 - e^{i\theta_e}\right) \notag\\
& = \exp\left[ i\theta_m - \frac{1}{2\pi}  (\theta_e - \pi) \log\left[\frac{a}{|a|}\right] + \frac{\theta_e - \pi}{2\pi}\log \zeta - i(\theta_e - \pi) \notag \right. \\
&  \left. \hspace{14 mm} - \frac{1}{2} \log\left(1 - e^{-i\theta_e}\right) + \log\left(1 - e^{-i\theta_e}\right) + i(\theta_e - \pi)   \right] \notag \\
& = \exp\left[ i\theta_m - \frac{1}{2\pi}  (\theta_e - \pi) \log\left[\frac{a}{|a|}\right] + \frac{\theta_e - \pi}{2\pi}\log
 \zeta + \frac{1}{2} \log\left(1 - e^{-i\theta_e}\right)  \right] \label{xmany}.
\end{align}

 Observe that, throughout all these calculations, we only had to use the natural branch of the complex logarithm. In summary, (\ref{xmany}) works for any region in the $a$-plane, with a cut in the negative real axis.

This also suggest the following coordinate transformation
\begin{equation}\label{thetapr}
 \theta'_m = \theta_m + \frac{i(\theta_e - \pi)}{4\pi} \left( \log\frac{a}{\Lambda} - \log\frac{\bar{a}}{\overline{\Lambda}} \right) 
\end{equation}
Here $\Lambda$ is the same cutoff constant as in \cite{gaiotto}. Let $\varphi$ parametrize the phase of $a/|a|$. Then \eqref{thetapr} simplifies to
\begin{equation}\label{nicethm}
\theta'_m = \theta_m - \frac{(\theta_e - \pi)\varphi}{2\pi}
\end{equation}

On a coordinate patch around the singular fiber, $\theta'_m$ is single-valued. 
Thus, the above shows that we can glue to $\mathcal{M}'$ another $S^1$-bundle over $D \times (0,2\pi)$, for $D$ a small open disk around $a = 0$, and $\theta_e \in (0,2\pi)$. The $S^1$-fiber is parametrized by $\theta'_m$ and the transition function is given by \eqref{nicethm}, yielding a manifold $\mathcal{M}$. In this patch, we can extend $\widetilde{\mathcal{X}}_m$ to $a = 0$ as:

\begin{equation}\label{reblow}
\left. \widetilde{\mathcal{X}}_m\right|_{a = 0} = e^{i\theta'_m} \zeta^{\frac{\theta_e - \pi}{2\pi}} (1 - e^{-i\theta_e})^{\frac{1}{2}}
\end{equation}
where the branch of $\zeta^{\frac{\theta_e - \pi}{2\pi}}$ is determined by the natural branch of the logarithm in the $\zeta$ plane. Note that when $\theta_e = 0$, $\widetilde{\mathcal{X}}_m \equiv 0$ in \eqref{reblow} and by definition, $\mathcal{X}_e \equiv 1$. Since these two functions are Darboux coordinates for $\mathcal{M}$, the $S^1$ fibration over $D \times (0, 2\pi)$ we glued to $\mathcal{M}'$ to get $\mathcal{M}$ degenerates into a point when $\theta_e = 0$. 

Now consider the case that $\arg \zeta \in (-\pi, 0)$. Label the regions as one travels counterclockwise, starting with the region bounded by the cut and the $\ell_-$ (See Figure \ref{negarg}). We can do an analytic continuation similar to \eqref{regmod} starting in region I, but formulas \eqref{regions}, \eqref{breaklog} become now:

\begin{figure}[htbp]
	\centering
		\includegraphics[width=0.50\textwidth]{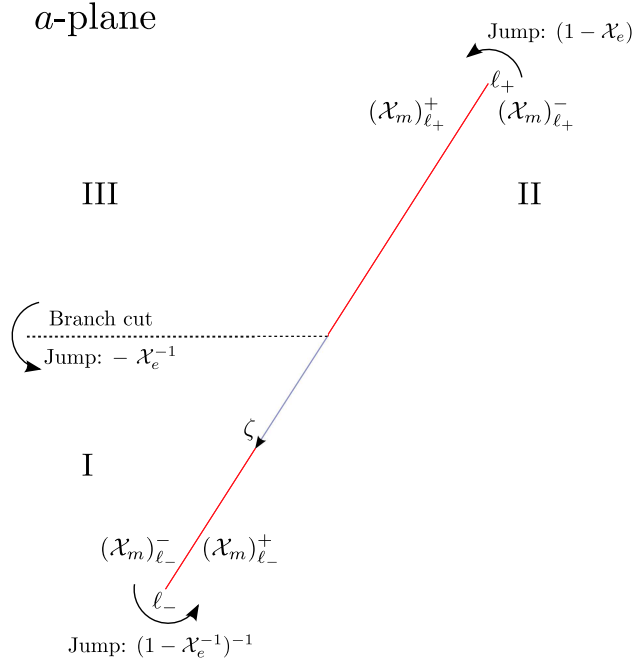}
	\caption{The three regions in the case $\arg \zeta < 0$.}
	\label{negarg}
\end{figure}

\begin{align*}
 \log\left[\frac{-a}{|a|\zeta}\right] & = \left\{ \begin{array}{ll}
     \log\left[\frac{a}{|a|\zeta}\right] - i\pi & \text{in region II}\\
	   \log\left[\frac{a}{|a|\zeta}\right] + i\pi & \text{in regions I and III}
	   \end{array} \right. \\
	   \log [a/\zeta] & = \left\{ \begin{array}{ll}
     \log a - \log \zeta & \text{in regions I and II}\\
	   \log a - \log \zeta - 2\pi i & \text{in region III}
	   \end{array} \right.
\end{align*}

By an argument entirely analogous to the case $\arg \zeta > 0$, we get again:
\begin{equation*}
\left. \widetilde{\mathcal{X}}_m\right|_{a = 0} = e^{i\theta'_m} \zeta^{\frac{\theta_e - \pi}{2\pi}} (1 - e^{-i\theta_e})^{\frac{1}{2}}
\end{equation*}

The case $\zeta$ real and positive is even simpler, as Figure \ref{zeroarg} shows. Here we have only two regions, and the jumps at the cut and the $\ell_+$ ray are combined, since these two lines are the same. Label the lower half-plane as region I and the upper half-plane as region II. Start an analytic continuation of $\mathcal{X}_m$ in region I as before, using the formulas:

\begin{figure}[htbp]
	\centering
		\includegraphics[width=0.50\textwidth]{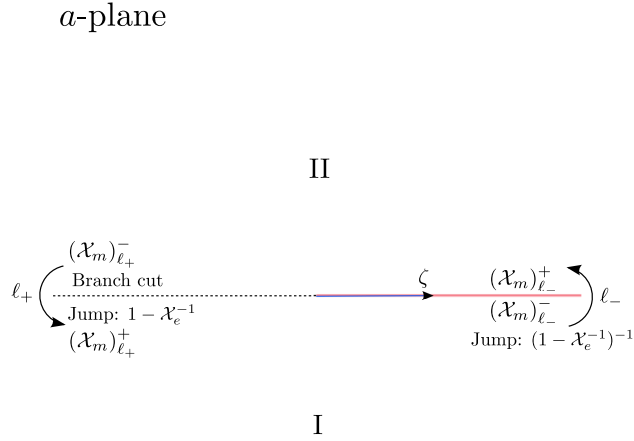}
	\caption{Only two regions in the case $\arg \zeta = 0$.}
	\label{zeroarg}
\end{figure}

\begin{align*}
 \log\left[\frac{-a}{|a|\zeta}\right] & = \left\{ \begin{array}{ll}
     \log\left[\frac{a}{|a|\zeta}\right] - i\pi & \text{in region II}\\
	   \log\left[\frac{a}{|a|\zeta}\right] + i\pi & \text{in region I}
	   \end{array} \right. \\
	   \log [a/\zeta] & = \log a - \log \zeta \hspace{4 mm} \text{in both regions}
\end{align*}

The result is equation \eqref{reblow} again. The case $\arg \zeta = \pi$ is entirely analogous to this and it yields the same formula, thus proving that \eqref{reblow} holds for all $\zeta$ and is independent of $\arg a$.
\end{proof}

\subsection{Alternative Riemann-Hilbert problem}\label{altrh}

We may obtain the function $\mathcal{X}_m$ (and consequently, the analytic extension $\widetilde{\mathcal{X}}_m$) at $a = 0$ through a slightly different formulation of the Riemann-Hilbert problem stated in \eqref{defxm}. Namely, instead of defining a jump of $\mathcal{X}_m$ at two opposite rays $\ell_+, \ell_-$, we combine these into a single jump at the line $\ell$ defined by $\ell_+$ and $\ell_-$, as in Figure \ref{onejump}. Note that because of the orientation of $\ell$ one of the previous jumps has to be reversed.

\begin{figure}[htbp]
	\centering
		\includegraphics[width=0.40\textwidth]{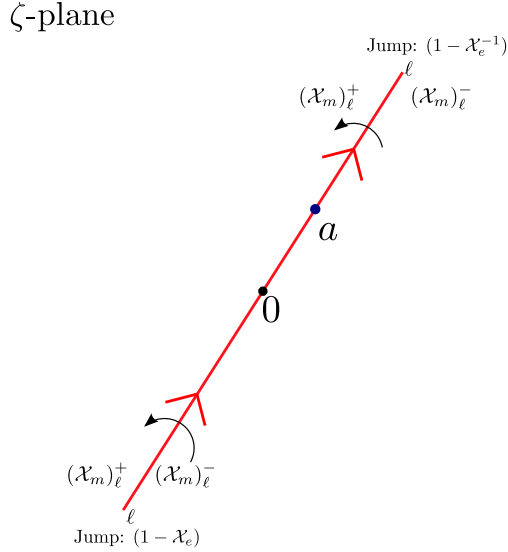}
	\caption{The reversed orientation on $\ell_+$ inverts the jump.}
	\label{onejump}
\end{figure}

For all values $a \neq 0$, $\mathcal{X}_e = \mathcal{X}_e^{\text{sf}}$ approaches 0 as $\zeta \to 0$ or $\zeta \to \infty$ along the $\ell$ ray due to the exponential decay in formula \eqref{xesf}. Thus, the jump function
\[ G(\zeta) := \left\{ \begin{array}{ll} 1-\mathcal{X}^{-1}_e & \text{ for $\zeta = t a, 0 \leq t \leq \infty$} \\ 
                                     1- \mathcal{X}_e & \text{ for $\zeta = t a, -\infty \leq t \leq 0$} \end{array} \right. \]                                     
is continuous on $\ell$ regarded as a closed contour on $\cpone$, and it approaches the identity transformation exponentially fast at the points $0$ and $\infty$.



The advantage of this reformulation of the Riemann-Hilbert problem is that it can be extended to the case $a = 0$ and we can obtain estimates on the solutions $\mathcal{X}_m$ even without an explicit formulation. If we fix $\arg a$ and let $|a| \to 0$ as before, the jump function $G(\zeta)$ approaches the constant jumps
\begin{equation}\label{jumpd}
\left. G(\zeta) \right|_{|a|=0} := \left\{ \begin{array}{ll} 1-e^{-i\theta_e} & \text{ for $\zeta = t a, 0 < t < \infty$} \\ 
                                     1- e^{i\theta_e} & \text{ for $\zeta = t a, -\infty < t < 0$} \end{array} \right.
\end{equation}
Thus, $\left. G(\zeta) \right|_{|a|=0}$ has two discontinuities at $0$ and $\infty$. If we denote by 
\[ \Delta_0 = \lim_{t \to 0^+} G(\zeta) - \lim_{t \to 0^-} G(\zeta), \qquad  \Delta_\infty = \lim_{t \to \infty^+} G(\zeta) - \lim_{t \to \infty^-} G(\zeta), \]
then, by \eqref{jumpd},
\begin{equation*}
\Delta_0 = -\Delta_\infty
\end{equation*}


Let $D^+$ be the region in $\cpone$ bounded by $\ell$ with the positive, counterclockwise orientation. Denote by $D^-$ the region where $\ell$ as a boundary has the negative orientation. We look for solutions of the homogeneous boundary problem
\begin{equation}\label{bcond}
X_m^+(\zeta) = G(\zeta) X_m^-(\zeta)
\end{equation}
with $G(\zeta)$ as in \eqref{jumpd}. This is Lemma 4.1 in \cite{rhprob}.

 The solutions $X_m^\pm$ obtained therein are related to $\mathcal{X}_m$ via $\mathcal{X}_m (\zeta) = \mathcal{X}^{\text{sf}}_m (\zeta) X_m (\zeta)$. Uniqueness of solutions of the homogeneous Riemann-Hilbert problem shows that these are the same functions (up to a constant factor) constructed in the previous section. Observe that the term $\zeta^{\frac{\theta_e - \pi}{2\pi}}$ appears naturally due to the nature of the discontinuity of the jump function at 0 and $\infty$. The analytic continuation around the point $a = 0$ and the gauge transformation $\theta_m \mapsto \theta'_m$ are still performed as before.

\subsection{Generalized Ooguri-Vafa coordinates}\label{genvafa}

We can generalize the previous extension to the case $Z_{\gamma_m} := \frac{1}{2\pi i}a \log a + f(a)$, where $f : \mathcal{B}' \to \cone$ is holomorphic and admits a holomorphic extension into $\mathcal{B}$. In particular,
\begin{equation}\label{newxmsf}
\mathcal{X}_m^\text{sf} = \exp \left( \frac{-iR}{2\zeta}a \log a + \frac{\pi R f(a)}{\zeta} + i \theta_m + \frac{i \zeta R}{2} \overline{a} \log \overline{a} + \pi R \zeta \overline{f(a)}\right)
\end{equation}
The value at the singular locus $f(0)$ does not have to be 0. All the other data remains the same. 

The first thing we observe is that $\mathcal{X}_e$ remains the same. Consequently, the corrections for the generalized $\mathcal{X}_m$ are as before. Using the change of coordinates as in \eqref{nicethm}, we can thus write
\begin{equation}\label{genov}
\left. \widetilde{\mathcal{X}}_m\right|_{a = 0} = \exp\left[ \frac{\pi R f(0)}{\zeta} + i\theta'_m + \pi R \zeta f(0) \right] \zeta^{\frac{\theta_e - \pi}{2\pi}} (1 - e^{-i\theta_e})^{\frac{1}{2}}
\end{equation}

\section{Extension of the Ooguri-Vafa metric}\label{gmetric}

\subsection{Classical Case}\label{clasmet}

\subsubsection{A $C^1$ extension of the coordinates}\label{c1ext}

In section \ref{clasov} we extended the fibered manifold $\mathcal{M}'$ to a manifold $\mathcal{M}$ with a degenerate fiber at $a = 0$ in $\mathcal{B}$. We also extended $\widetilde{\mathcal{X}}_m$ continuously to this bad fiber. Now we extend the metric by enlarging the holomorphic symplectic form $\varpi(\zeta)$. Recall that this is of the form
\[ \varpi(\zeta) = -\frac{1}{4\pi^2 R} \frac{d \mathcal{X}_e}{\mathcal{X}_e} \wedge  \frac{d\widetilde{\mathcal{X}}_m}{\widetilde{\mathcal{X}}_m} \]
Clearly there are no problems extending $d \log \mathcal{X}_e$, so it remains only to extend $d \log \widetilde{\mathcal{X}}_m$.

\begin{lemma}
Let $\widetilde{\mathcal{X}}_m$ denote the analytic continuation around $a = 0$ of the magnetic function, as in the last section. The 1-form
\begin{equation}\label{c1dlog}
d \log \widetilde{\mathcal{X}}_m = \frac{d \widetilde{\mathcal{X}}_m}{\widetilde{\mathcal{X}}_m},
\end{equation}
(where $d$ denotes the differential of a function on the torus fibration $\mathcal{M}'$ only) has an extension to $\mathcal{M}$
\end{lemma}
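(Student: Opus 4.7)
The plan is to compute $d\log\widetilde{\mathcal{X}}_m$ directly in the extended coordinate system $(\theta'_m, \theta_e, a, \bar a)$ introduced in Theorem~\ref{mprtom} and verify that its coefficients have continuous limits at $a = 0$ for $\theta_e \in (0, 2\pi)$. The target value is furnished by differentiating the explicit limit \eqref{reblow}:
\begin{equation*}
\left.d\log\widetilde{\mathcal{X}}_m\right|_{a=0} = i\,d\theta'_m + \frac{\log \zeta}{2\pi}\,d\theta_e + \frac{i e^{-i\theta_e}}{2(1-e^{-i\theta_e})}\,d\theta_e,
\end{equation*}
which notably has no $da$ or $d\bar a$ components, so the substantive task is to check that the $a$-dependent contributions on $\mathcal{M}'$ cancel in those directions as $a \to 0$.

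On $\mathcal{M}'$ I would write $\log \widetilde{\mathcal{X}}_m = \log \mathcal{X}_m^{\text{sf}} + \mathrm{corr}(a)$ plus the region-dependent summand $\log(1-\mathcal{X}_e^{\pm 1})$ from \eqref{regmod}. The region summands are smooth near $a=0$ for $\theta_e \in (0,2\pi)$, since $\mathcal{X}_e|_{a=0} = e^{i\theta_e} \notin \{0,1\}$. Differentiating the semi-flat exponent gives
\begin{equation*}
d\log \mathcal{X}_m^{\text{sf}} = -\frac{iR\log a}{2\zeta}\,da + i\,d\theta_m + \frac{iR\zeta \log \bar a}{2}\,d\bar a,
\end{equation*}
and substituting the derivative of \eqref{thetapr},
\begin{equation*}
i\,d\theta_m = i\,d\theta'_m + \tfrac{\theta_e - \pi}{4\pi}\bigl(\tfrac{da}{a} - \tfrac{d\bar a}{\bar a}\bigr) + \tfrac{i(\varphi - \arg\Lambda)}{2\pi}\,d\theta_e,
\end{equation*}
introduces singular $\tfrac{da}{a}$, $\tfrac{d\bar a}{\bar a}$, $\log a\,da$ and $\log\bar a\,d\bar a$ contributions.

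The key cancellations come from $d\,\mathrm{corr}(a)$. Because $\mathcal{X}_e(\zeta')$ decays exponentially at both endpoints of $\ell_\pm$, one can differentiate under the integral sign without boundary terms. At $a = 0$, the explicit limit \eqref{fextov} combined with the identity \eqref{logs}, $\log(1-e^{i\theta_e}) - \log(1-e^{-i\theta_e}) = i(\theta_e - \pi)$, shows that $d(\mathrm{corr}|_{a=0})$ contributes exactly $-\tfrac{\theta_e-\pi}{4\pi}\bigl(\tfrac{da}{a}-\tfrac{d\bar a}{\bar a}\bigr)$, cancelling the $\tfrac{da}{a}, \tfrac{d\bar a}{\bar a}$ singularities from the coordinate change. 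The subleading behavior of $\mathrm{corr}(a)$ as $a \to 0$, extracted by splitting the contours $\ell_\pm$ at the transition scales $|\zeta'| \sim |a|$ and $|\zeta'| \sim 1/|a|$ and rescaling $\zeta' = a\eta'$ inside each piece, produces additional $\log|a|$ contributions to the $da$ and $d\bar a$ coefficients whose magnitudes, after combining the $\ell_+$ and $\ell_-$ integrals, match exactly the semi-flat divergences $-\tfrac{iR\log a}{2\zeta}$ and $\tfrac{iR\zeta\log\bar a}{2}$.

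The main obstacle is this second cancellation: extracting the precise $\log|a|$ coefficient of $\partial_a\mathrm{corr}(a)$ through the transition regions on $\ell_\pm$, where $\log(1-\mathcal{X}_e(\zeta'))$ interpolates between its vanishing endpoint behavior and its interior value $\log(1-e^{i\theta_e})$. Once this asymptotic matching is in place, the surviving coefficients in the basis $(d\theta'_m, d\theta_e, da, d\bar a)$ are continuous across $a = 0$ and agree with the direct computation from \eqref{reblow}, giving the required extension of $d\log\widetilde{\mathcal{X}}_m$ to $\mathcal{M}$.
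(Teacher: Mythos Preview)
Your overall plan---compute $d\log\widetilde{\mathcal{X}}_m$ in the $(\theta'_m,\theta_e,a,\bar a)$ chart and show the singular coefficients cancel---is the same strategy the paper follows, and your identification of the first cancellation (the $\tfrac{\theta_e-\pi}{4\pi}\bigl(\tfrac{da}{a}-\tfrac{d\bar a}{\bar a}\bigr)$ from the coordinate change against the $\arg a$-dependence of \eqref{fextov}) is correct.

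The gap is exactly where you flag it: the $\log a\,da$ cancellation. Your proposed route (split $\ell_\pm$ at transition scales $|\zeta'|\sim |a|,\,|a|^{-1}$ and rescale) is a matched-asymptotics gesture, not an argument; you have not identified which piece of the rescaled integrand actually produces the coefficient $-\tfrac{R}{2\zeta}$ needed to kill the semiflat $\log a$, nor why the transition regions contribute nothing. The paper avoids this entirely by differentiating under the integral sign \emph{first}---so that the integrand becomes $\tfrac{\mathcal{X}_e}{1-\mathcal{X}_e}$ rather than $\log(1-\mathcal{X}_e)$---and then using the partial-fraction split
\[
\frac{\zeta'+\zeta}{(\zeta')^2(\zeta'-\zeta)}=\frac{2}{\zeta'(\zeta'-\zeta)}-\frac{1}{(\zeta')^2}.
\]
The $1/(\zeta')^2$ piece, after expanding $\tfrac{\mathcal{X}_e}{1-\mathcal{X}_e}$ as a geometric series, yields Bessel $K_1$ integrals whose $K_1(x)\sim 1/x$ asymptotics cancel the $-\tfrac{i(\theta_e-\pi)}{4\pi a}$ pole exactly. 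The $\tfrac{2}{\zeta'(\zeta'-\zeta)}$ piece is reduced, by dominated convergence, to the elementary integral $\int_0^1\tfrac{ds}{s+\pi Ra/\zeta}$ times $\tfrac{1}{1-e^{\mp i\theta_e}}$; the identity $\tfrac{1}{1-e^{-i\theta_e}}+\tfrac{1}{1-e^{i\theta_e}}=1$ then makes the three $\log a$ contributions sum to zero. This is concrete where your sketch is not, and it also handles the $d\theta_e$ coefficient cleanly (the $\arg a$-dependence drops out via \eqref{regions}, \eqref{breaklog}), which your outline does not address.
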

\begin{proof}
We proceed as in section \ref{clasov} and work in different regions in the $a$-plane (see Figure \ref{3reg}), starting with region I, where $\widetilde{\mathcal{X}}_m = \mathcal{X}_m$. Then observe that we can write the corrections on $\mathcal{X}_m$ as a complex number $\Upsilon_m(\zeta) \in (\mathcal{M}'_a)^{\cone}$ such that
\[ \mathcal{X}_m = \exp\left( \frac{-i R }{2\zeta}(a\log a - a) + i \Upsilon_m + \frac{i\zeta R}{2} (\overline{a} \log
\overline{a} - \overline{a} )\right). \]
Thus, by \eqref{c1dlog} and ignoring the $i$ factor, it suffices to obtain an extension of 
\begin{align}
& d\left[ \frac{- R }{2\zeta}(a\log a - a) +  \Upsilon_m + \frac{\zeta R}{2} (\overline{a} \log
\overline{a} - \overline{a} ) \right] \notag \\
& = \frac{ -R}{2\zeta} \log a \, da +  d \Upsilon_m + \frac{ \zeta R}{2} \log \overline{a} \, d\overline{a}. \label{3ext}
\end{align}
Using \eqref{defxm},
\begin{align*}
d \Upsilon_m  = d\theta_m & - \frac{1}{4\pi}\int_{\ell_+} \dfrac{d\zeta'}{\zeta'} \frac{\zeta'+\zeta}{\zeta'-\zeta} \frac{\mathcal{X}_e}{1-\mathcal{X}_e}\left( \frac{\pi R}{\zeta'} da +id\theta_e+ \pi R \zeta' d\overline{a}\right) \notag\\
& +\frac{1}{4\pi}\int_{\ell_-} \dfrac{d\zeta'}{\zeta'} \frac{\zeta'+\zeta}{\zeta'-\zeta} \frac{\mathcal{X}^{-1}_e}{1-\mathcal{X}^{-1}_e}\left( -\frac{\pi R}{\zeta'} da -id\theta_e - \pi R \zeta' d\overline{a}\right).
\end{align*}
We have to change our $\theta_m$ coordinate into $\theta'_m$ according to \eqref{nicethm} and differentiate to obtain:
\begin{align}
d \Upsilon_m  & =  
d\theta'_m - \frac{i(\theta_e - \pi)}{4\pi} \left( \frac{da}{a} - \frac{d\overline{a}}{\overline{a}}\right)+\frac{\arg a}{2\pi}d\theta_e \notag \\
& - \frac{1}{4\pi}\int_{\ell_+} \dfrac{d\zeta'}{\zeta'} \frac{\zeta'+\zeta}{\zeta'-\zeta} \frac{\mathcal{X}_e}{1-\mathcal{X}_e}\left( \frac{\pi R}{\zeta'} da +id\theta_e+ \pi R \zeta' d\overline{a}\right) \notag\\
& +\frac{1}{4\pi}\int_{\ell_-} \dfrac{d\zeta'}{\zeta'} \frac{\zeta'+\zeta}{\zeta'-\zeta} \frac{\mathcal{X}^{-1}_e}{1-\mathcal{X}^{-1}_e}\left( -\frac{\pi R}{\zeta'} da -id\theta_e - \pi R \zeta' d\overline{a}\right) \label{dups1}
\end{align}
Recall that, since we have introduced the change of coordinates $\theta_m \mapsto \theta'_m$, we are working on a patch on $\mathcal{M}$ that contains $a = 0$ with a degenerate fiber here. It then makes sense to ask if \eqref{3ext} extends to $a =0$. If this is true, then every independent 1-form extends individually. Let's consider the form involving $d\theta_e$ first. By \eqref{dups1}, this part consists of:
\begin{equation}\label{dthpart}
\frac{\arg a}{2\pi}d\theta_e  - \frac{i}{4\pi}\int_{\ell_+} \dfrac{d\zeta'}{\zeta'} \frac{\zeta'+\zeta}{\zeta'-\zeta} \frac{\mathcal{X}_e}{1-\mathcal{X}_e} d\theta_e - \frac{i}{4\pi}\int_{\ell_-} \dfrac{d\zeta'}{\zeta'} \frac{\zeta'+\zeta}{\zeta'-\zeta} \frac{\mathcal{X}^{-1}_e}{1-\mathcal{X}^{-1}_e} d\theta_e.
\end{equation}
We can use the exact same technique in section \ref{clasov} to find the limit of \eqref{dthpart} as $a \to 0$. Namely, split each integral into four parts, use the symmetry of $\dfrac{\mathcal{X}_e}{1- \mathcal{X}_e}$ between $0$ and $\infty$ to cancel two of these integrals and take the limit in the remaining ones. The result is:
\begin{align}
& \frac{\arg a}{2\pi} - \frac{ie^{i\theta_e}}{2\pi(1-e^{i\theta_e})}\log\left[ \frac{e^{i \arg a}}{\zeta} \right] - 
\frac{ie^{-i\theta_e}}{2\pi(1-e^{-i\theta_e})}\log\left[ \frac{-e^{i \arg a}}{\zeta} \right]  \notag \\
& = \frac{\arg a}{2\pi} - \frac{ie^{i\theta_e}}{2\pi(1-e^{i\theta_e})}\log\left[ \frac{e^{i \arg a}}{\zeta} \right] + 
\frac{i}{2\pi(1-e^{i\theta_e})}\log\left[ \frac{-e^{i \arg a}}{\zeta} \right] \label{dthe}
\end{align}
in region I (we omitted the $d\theta_e$ factor for simplicity). Making use of formulas \eqref{regions} and \eqref{breaklog}, we can simplify the above expression and get rid of the apparent dependence on $\arg a$ until finally getting:
\[ -\frac{i\log \zeta}{2\pi} - \frac{1}{2(1-e^{i\theta_e})}, \hspace{7 mm} \theta_e \neq 0. \]
In other regions of the $a$-plane we have to modify $\widetilde{\mathcal{X}}_m$ as in \eqref{regmod}. Nonetheless, by \eqref{regions} and \eqref{breaklog}, the result is the same and we conclude that at least the terms involving $d\theta_e$ have an extension to $a=0$ for $\theta_e \neq 0$.

Next we extend the terms involving $da$. By \eqref{3ext} and \eqref{dups1}, these are:
\begin{equation*}
\frac{ -R}{2\zeta} \log a \, da  - \frac{i(\theta_e - \pi)}{4\pi a} da - \frac{R}{4}\int_{\ell_+} \dfrac{d\zeta'}{(\zeta')^2} \frac{\zeta'+\zeta}{\zeta'-\zeta} \frac{\mathcal{X}_e}{1-\mathcal{X}_e} da - \frac{R}{4}\int_{\ell_-} \dfrac{d\zeta'}{(\zeta')^2} \frac{\zeta'+\zeta}{\zeta'-\zeta} \frac{\mathcal{X}^{-1}_e}{1-\mathcal{X}^{-1}_e} da 
\end{equation*}
In what follows, we ignore the $da$ part and focus on the coefficients for the extension. The partial fraction decomposition
\begin{equation}\label{parfrac}
\frac{\zeta'+\zeta}{(\zeta')^2(\zeta'-\zeta)} = \frac{2}{\zeta'(\zeta'-\zeta)} - \frac{1}{(\zeta')^2}
\end{equation}
splits each integral above into two parts. We will consider first the terms
\begin{equation}\label{dap1}
- \frac{i(\theta_e - \pi)}{4\pi a} + \frac{R}{4}\int_{\ell_+} \dfrac{d\zeta'}{(\zeta')^2}\frac{\mathcal{X}_e}{1-\mathcal{X}_e} + \frac{R}{4}\int_{\ell_-} \dfrac{d\zeta'}{(\zeta')^2}  \frac{\mathcal{X}^{-1}_e}{1-\mathcal{X}^{-1}_e}.
\end{equation}
Use the fact that $\mathcal{X}_e$ (resp. $\mathcal{X}^{-1}_e$) has norm less than 1 on $\ell_+$ (resp. $\ell_-$) and the uniform convergence of the geometric series on $\zeta'$ to write \eqref{dap1} as:
\begin{align*}
- \frac{i(\theta_e - \pi)}{4\pi a} + \frac{R}{4}\sum_{n=1}^\infty \left\{ \vphantom{\int_{\ell_+}} \right. & \int_{\ell_+}\frac{d\zeta'}{(\zeta')^2}\exp\left(
\frac{\pi R n a}{\zeta'} +i n \theta_e +\pi R n \zeta' \overline{a}\right) + \\
& \left.
\int_{\ell_-}\frac{d\zeta'}{(\zeta')^2}\exp\left(
\frac{-\pi R n a}{\zeta'} -i n \theta_e -\pi R n \zeta' \overline{a}\right)\right\},
\end{align*}
\begin{align*}
& = - \frac{i(\theta_e - \pi)}{4\pi a} + \left(\frac{R}{4}\right) \left( \frac{-2|a|}{a}\right)\sum_{n=1}^\infty \left( e^{in\theta_e} - e^{-in\theta_e}\right)K_1(2\pi R n |a|)\\
& = - \frac{i(\theta_e - \pi)}{4\pi a} - \frac{R|a|}{2a}\sum_{n=1}^\infty \left( e^{in\theta_e} - e^{-in\theta_e}\right)K_1(2\pi R n |a|).
\end{align*}
Since $K_1(x) \thicksim 1/x$, for $x$ real and $x \to 0$, we obtain, letting $a \to 0$:
\begin{align*}
& - \frac{i(\theta_e - \pi)}{4\pi a} - \frac{R|a|}{2a\cdot 2\pi R |a|} \sum_{n=1}^\infty \frac{\left( e^{in\theta_e} - e^{-in\theta_e}\right)}{n}\\
& =  - \frac{i(\theta_e - \pi)}{4\pi a} + \frac{1}{4\pi a}[\log(1-e^{i\theta_e})-\log(1-e^{-i\theta_e})]\\
\intertext{and by \eqref{logs},}
& =  - \frac{i(\theta_e - \pi)}{4\pi a} +\frac{i(\theta_e -\pi)}{4\pi a} = 0.
\end{align*}
Therefore this part of the $da$ terms extends trivially to 0 in the singular fiber.

It remains to extend the other terms involving $da$. Recall that by \eqref{parfrac}, these terms are (after getting rid of a factor of $-R/2$):
\begin{equation}\label{last3}
\frac{ \log a}{\zeta} + \int_{\ell_+} \frac{d\zeta'}{\zeta'(\zeta'-\zeta)} \frac{\mathcal{X}_e}{1-\mathcal{X}_e} +  \int_{\ell_-} \frac{d\zeta'}{\zeta'(\zeta'-\zeta)} \frac{\mathcal{X}^{-1}_e}{1-\mathcal{X}^{-1}_e}.
\end{equation}

We'll focus in the first integral in \eqref{last3}. As a starting point, we'll prove that as $a \to 0$, the limiting value of this integral is the same as the limit of
\begin{equation}\label{simpler}
 \int_{\ell_+}  \frac{d\zeta'}{\zeta'(\zeta'-\zeta)} \frac{\exp\left( \frac{\pi R a}{\zeta'} +i\theta_e \right)}{1-\exp\left( \frac{\pi R a}{\zeta'} +i\theta_e +\pi R \zeta' \overline{a}\right)}.
\end{equation}
It suffices to show that
\begin{equation}\label{lebes}
 \int_{\ell_+}  \frac{d\zeta'}{\zeta'(\zeta'-\zeta)} \frac{\exp\left( \frac{\pi R a}{\zeta'}  \right)}{1-\exp\left( \frac{\pi R a}{\zeta'} +i\theta_e +\pi R \zeta' \overline{a}\right)} [1-\exp(\pi R \zeta' \overline{a})] \to 0, \hspace{5 mm} \text{as $a \to 0$, $\theta_e \neq 0$}
\end{equation}
To see this, we can assume $|a| < 1$. Let $b = a/|a|$. Observe that in the $\ell_+$ ray, $|\exp(\pi Ra/\zeta')| < 1$, and since $\theta_e \neq 0$, we can bound \eqref{lebes} by
\[ \text{const} \int_{\ell_+} \frac{d\zeta'}{\zeta'(\zeta'-\zeta)} [1-\exp(\pi R \zeta' \overline{b})] < \infty. \]
Equation \eqref{lebes} now follows from Lebesgue Dominated Convergence and the fact that $1-\exp(\pi R \zeta' \overline{a}) \to 0$ as $a \to 0$. A similar application of Dominated Convergence allows us to reduce the problem to the extension of
\begin{equation}\label{secsimp}
\int_{\ell_+}  \frac{d\zeta'}{\zeta'(\zeta'-\zeta)} \frac{\exp\left( \frac{\pi R a}{\zeta'} +i\theta_e \right)}{1-\exp\left( \frac{\pi R a}{\zeta'} +i\theta_e \right)}.
\end{equation}
Introduce the real variable $s = -\pi R a / \zeta'$. We can write \eqref{secsimp} as:
\begin{align}
& e^{i\theta_e}\int_0^\infty \frac{ds}{s\left[ \frac{-\pi R a}{s} - \zeta \right]} \frac{e^{-s}}{1-e^{i\theta_e-s}} \notag\\
& = -\frac{1}{\zeta}\int_0^\infty \frac{ds}{s+\frac{\pi R a}{\zeta}} \cdot \frac{e^{-s}}{e^{-i\theta_e}-e^{-s}} \notag\\
& = \frac{1}{\zeta}\int_0^\infty \frac{ds}{s+\frac{\pi R a}{\zeta}} \cdot \frac{1}{1-e^{s-i\theta_e}} \label{doublez}
\end{align}
The integrand of \eqref{doublez} has a double zero at $\infty$, when $a \to 0$, so the only possible non-convergent part in the limit $a=0$ is the integral
\[ \frac{1}{\zeta}\int_0^1 \frac{ds}{s+\frac{\pi R a}{\zeta}} \cdot \frac{1}{1-e^{s-i\theta_e}}. \]
Since
\[ \int_0^1 \frac{ds}{s} \left[ \frac{1}{1-e^{s-i\theta_e}} - \frac{1}{1-e^{-i\theta_e}}\right] < \infty, \]
we can simplify this analysis even further and focus only on
\begin{align}
& \frac{1}{\zeta(1-e^{-i\theta_e})} \int_0^1 \frac{ds}{s+\frac{\pi R a}{\zeta}} \\
& = -\frac{\log (\pi R a /\zeta)}{\zeta(1-e^{-i\theta_e})}.
\end{align}
We can apply the same technique to obtain a limit for the second integral in \eqref{last3}. The result is
\[ -\frac{\log (-\pi R a /\zeta)}{\zeta(1-e^{i\theta_e})}, \]
which means that the possibly non-convergent terms in \eqref{last3} are:
\begin{equation}\label{cancel}
\frac{\log a}{\zeta} - \frac{\log a}{\zeta(1-e^{-i\theta_e})} - \frac{\log a}{\zeta(1-e^{i\theta_e})} = 0.
\end{equation}
Note that the corrections of $\mathcal{X}_m$ in other regions of the $a$-plane as in \eqref{regmod} depend only on $\mathcal{X}_e$, which clearly has a smooth extension to the singular fiber.

The extension of the $d\overline{a}$ part is performed in exactly the same way as with the $da$ forms. We conclude that the 1-form
\[ \frac{d\widetilde{\mathcal{X}}_m}{\widetilde{\mathcal{X}}_m} \]
has an extension to $\mathcal{M}$; more explicitly, to the fiber at $a=0$ in the classical Ooguri-Vafa case. This holds true also in the generalized Ooguri-Vafa case since here we simply add factors of the form $f'(a)da$ and it is assumed that $f(a)$ has a smooth extension to the singular fiber.
\end{proof}

In section \ref{sfiber}, we will reinterpret these extension of the derivatives of $\mathcal{X}_m$ if we regard the gauge transformation \eqref{nicethm} as a contour integral between symmetric contours. It will be then easier to see that the extension can be made smooth.

\subsubsection{Extension of the metric}\label{extmetric}

The results of the previous section already show the continuous extension of the holomorphic symplectic form
\[ \varpi(\zeta) = -\frac{1}{4\pi^2 R} \frac{d \mathcal{X}_e}{\mathcal{X}_e} \wedge  \frac{d\widetilde{\mathcal{X}}_m}{\widetilde{\mathcal{X}}_m} \]
to the limiting case $a = 0$, but we excluded the special case $\theta_e = 0$. Here we obtain $\varpi(\zeta)$ at the singular fiber with a different approach that will allow us to see that such an extension is smooth without testing the extension for each derivative. Although it was already known that $\mathcal{M}'$ extends to the hyperk\"{a}hler manifold $\mathcal{M}$ constructed here, this approach is new, as it gives an explicit construction of the metric as we will see. Furthermore, the Ooguri-Vafa model can be thought as an elementary model for which more complex integrable systems are modeled locally (see \S \ref{sfiber}).
\begin{theorem}
The holomorphic symplectic form $\varpi(\zeta)$ extends smoothly to $\mathcal{M}$. Near $a = 0$ and $\theta_e = 0$, the hyperk\"{a}hler metric $g$ looks like a constant multiple of the Taub-NUT metric $g_{\text{Taub-NUT}}$ plus some smooth corrections.
\end{theorem}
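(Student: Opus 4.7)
The plan is to bypass the piecewise extension of Section 3.1.1 by deriving a single closed-form expression for $\varpi(\zeta)$ via Bessel-function resummation, and then comparing that expression with the Gibbons-Hawking form of Taub-NUT. First, starting from \eqref{defxm}, I would expand $\log(1-\mathcal{X}_e)$ and $\log(1-\mathcal{X}_e^{-1})$ as geometric series in $\mathcal{X}_e^{\pm 1}$ and exchange sum and integral, justified by the exponential decay of $\mathcal{X}_e^{\pm 1}$ on $\ell_\pm$. Each term reduces, via the classical identity
\[
\int_0^\infty \frac{ds}{s}\exp(-\alpha s - \beta/s) = 2K_0(2\sqrt{\alpha\beta}),
\]
to a modified Bessel function of argument $2\pi R n |a|$. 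After carrying out the $\theta_m \mapsto \theta_m'$ gauge transformation \eqref{nicethm}, this yields a representation of $\log\widetilde{\mathcal{X}}_m$ as an explicit sum of Bessel series plus the semiflat piece, manifestly smooth on $\mathcal{M}'\times\mathbb{C}^\times$ and, by the branch analysis of Section 2, extending smoothly across $D\times S^1$ away from the single point $(a,\theta_e)=(0,0)$.

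Second, I would insert these expansions into
\[
\varpi(\zeta) = -\frac{1}{4\pi^2 R}\, d\log\mathcal{X}_e \wedge d\log\widetilde{\mathcal{X}}_m
\]
and read off the expansion $\varpi(\zeta) = -\tfrac{i}{2\zeta}\omega_+ + \omega_3 - \tfrac{i\zeta}{2}\omega_-$. A direct comparison shows that $(\omega_+,\omega_3,\omega_-)$ realizes the Gibbons-Hawking ansatz on $\mathbb{R}^3\times S^1$ with harmonic potential
\[
V(|a|,\theta_e) = \frac{R}{4\pi}\log\frac{|a|^2}{\Lambda^2} + \frac{R}{2\pi}\sum_{n\neq 0}e^{in\theta_e}\,K_0(2\pi R|n||a|),
\]
and connection $1$-form $A$ obeying $dA = *_3 dV$. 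This is exactly the potential identified in \cite{oovf,gross2}, so smoothness of $\varpi(\zeta)$ reduces to smoothness and positivity of $V$ together with the standard fact that the Gibbons-Hawking construction produces a smooth hyperkähler metric wherever its harmonic datum is smooth.

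Third, I would apply Poisson summation in $n$ to rewrite $V$ as a lattice sum
\[
V = \text{const} + \sum_{m\in\mathbb{Z}}\frac{1}{\sqrt{(2\pi R|a|)^2 + (\theta_e - 2\pi m)^2}},
\]
where the $\log(|a|/\Lambda)^2$ term is absorbed as the renormalization of the divergent $m$-sum. The $m=0$ summand is precisely the Taub-NUT potential $1/r$ in the radial coordinate $r^2=(2\pi R|a|)^2+\theta_e^2$, whose nut sits at $(a,\theta_e)=(0,0)$ and corresponds exactly to the circle collapsed by the gauge transformation \eqref{nicethm}; the remaining $m\neq 0$ summands are real-analytic in a neighborhood of the nut. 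Consequently, in that neighborhood,
\[
g = c\,g_{\text{Taub-NUT}} + g_{\text{smooth}},
\]
so the well-known smooth extension of Taub-NUT across its nut provides the desired smooth extension of $g$, and hence of $\varpi(\zeta)$, across the degenerate fiber of $\mathcal{M}$.

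The main obstacle is the justification of the Poisson resummation with its divergent zero-mode and the matching of the $\log(|a|^2/\Lambda^2)$ counterterm against the renormalization of the lattice sum; this is essentially a careful Epstein-style regularization argument, relying on the exponential decay of $K_0$ for large argument to control the tails and allow termwise differentiation. Once this identification is secured, everything else — smoothness of $\omega_3, \omega_\pm$, positivity of $V$ away from the nut, and the final identification with $g_{\text{Taub-NUT}}$ plus smooth corrections — is immediate from the explicit Gibbons-Hawking formulas, and the theorem follows.
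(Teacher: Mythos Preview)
Your proposal is correct and follows essentially the same route as the paper: Bessel-series resummation of the corrections (which the paper imports from \cite{gaiotto}), identification of $\varpi(\zeta)$ with a Gibbons--Hawking ansatz governed by $(V,A)$ with $dA=*dV$, Poisson resummation of $V$ into a lattice sum, and extraction of the $m=0$ term as the Taub--NUT potential $1/r$ with the remaining terms smooth at the nut. The only substantive addition in the paper is that it also Poisson-resums the connection $A'$ directly (obtaining $A'=\tfrac{1}{4\pi}(\cos\vartheta+D)\,d\phi$) and reconstructs $g$ explicitly via $J_3$ and $\omega_3$ rather than invoking general Gibbons--Hawking smoothness, but this is a matter of presentation rather than strategy.
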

\begin{proof}
 By \cite{gaiotto}, near $a = 0$,
\[ \varpi(\zeta) = -\frac{1}{4\pi^2 R} \frac{d \mathcal{X}_e}{\mathcal{X}_e} \wedge \left[ id\theta_m + 2\pi i A + \pi i V 
\left(\frac{1}{\zeta}da - \zeta d\bar{a}\right)\right], \]
where
\[ A = \frac{1}{8\pi^2}\left( \log \frac{a}{\Lambda} - \log\frac{\bar{a}}{\overline{\Lambda}} \right)d\theta_e - \frac{R}{4\pi} \left( \frac{da}{a} - \frac{d\bar{a}}{\bar{a}}\right)\sum_{n \neq 0} (\text{sgn} \,n) e^{in\theta_e} |a|
   K_1(2\pi R|na|) \]
should be understood as a $U(1)$ connection over the open subset of $\cone \times S^1$ parametrized by $(a,\theta_e)$ and $V$ is given by Poisson re-summation as
\begin{equation}\label{potent}
V = \frac {R}{4\pi}\left[ \frac{1}{\sqrt{R^2|a|^2 + \frac{\theta_e^2}{4\pi^2}}} + \sum_{\substack{n = -\infty \\ n \neq 0}}^\infty \left( \frac{1}{\sqrt{R^2 |a|^2 + (\frac{\theta_e}{2\pi} + n)^2}} - \kappa_n \right) \right].
\end{equation}
Here $\kappa_n$ is a regularization constant introduced to make the sum convergent, even at $a = 0, \theta_e \neq 0$. The curvature $F$ of the unitary connection satisfies
\begin{equation}\label{curv}
dA = *dV.
\end{equation}
Consider now a gauge transformation $\theta_m \mapsto \theta_m + \alpha$ and its induced change in the connection $A \mapsto A' = A - d\alpha/2\pi$ (see \cite{gaiotto}). We have $id\theta'_m + 2\pi i A' = id\theta_m + id\alpha + 2\pi i A - id\alpha = id\theta_m + 2\pi i A$. Furthermore, for the particular gauge transformation in (\ref{thetapr}), at $a = 0$ and for $\theta_e \neq 0$:
\begin{align*}
A' & = A - \frac{d\alpha}{2\pi}\\
& = \frac{1}{8\pi^2}\left( \log \frac{a}{\Lambda} - \log\frac{\bar{a}}{\overline{\Lambda}} \right)d\theta_e - \frac{1}{8\pi^2} \left( \frac{da}{a} - \frac{d\bar{a}}{\bar{a}}\right) \left[ \sum_{n = 1}^\infty \frac{e^{in\theta_e}}{n} - \sum_{n = 1}^\infty \frac{e^{-in\theta_e}}{n} \right]\\
& - \frac{1}{8\pi^2}\left( \log \frac{a}{\Lambda} - \log\frac{\bar{a}}{\overline{\Lambda}} \right)d\theta_e - \frac{i(\theta_e - \pi)}{8\pi^2}\left( \frac{da}{a} - \frac{d\bar{a}}{\bar{a}}\right),\\
\intertext{(here we're using the fact that $K_1(x) \to 1/x$ as $x \to 0$)}
& = \frac{i(\theta_e - \pi)}{8\pi^2}\left( \frac{da}{a} - \frac{d\bar{a}}{\bar{a}}\right) - \frac{i(\theta_e - \pi)}{8\pi^2}\left( \frac{da}{a} - \frac{d\bar{a}}{\bar{a}}\right) = 0.\\
\intertext{since the above sums converge to $-\log(1 - e^{i\theta_e}) + \log(1 - e^{-i\theta_e}) = -i(\theta_e - \pi)$ for $\theta_e \neq 0$.}
\end{align*}

Writing $V_0$ (observe that this only depends on $\theta_e$) for the limit of $V$ as $a \to 0$, we get at $a = 0$
\begin{align*}
\varpi(\zeta) & = -\frac{1}{4\pi^2 R} \left( \frac{\pi R}{\zeta}da + id\theta_e + \pi R \zeta d\bar{a} \right) \wedge \left(
  id\theta'_m + \pi i V_0 \left( \frac{da}{\zeta} - \zeta d\bar{a}\right) \right)  \\
  & = \frac{1}{4\pi^2 R} d\theta_e \wedge d\theta'_m + \frac{iV_0}{2}da \wedge d\bar{a} -\frac{i}{4\pi \zeta}da \wedge d\theta'_m - \frac{V_0}{4\pi R\zeta}da \wedge d\theta_e  \\
  & - \frac{i\zeta}{4\pi} d\bar{a} \wedge d\theta'_m + \frac{V_0 \zeta}{4\pi R} d\bar{a} \wedge d\theta_e.
\end{align*}

This yields that, at the singular fiber,
\begin{align}
\omega_3 & = \frac{1}{4\pi^2 R} d\theta_e \wedge d\theta'_m + \frac{iV_0}{2}da \wedge d\bar{a} \label{symp3}\\
\omega_+ & = \frac{1}{2\pi} da \wedge \left( d\theta'_m - \frac{iV_0}{R}d\theta_e \right)\label{symp+}\\
\omega_- & = \frac{1}{2\pi} d\bar{a} \wedge \left( d\theta'_m + \frac{iV_0}{R}d\theta_e \right)\label{symp-}
\end{align}

From the last two equations we obtain that $d\theta'_m - iV_0/R d\theta_e$ and $d\theta'_m + iV_0/R d\theta_e$ are respectively (1,0) and (0,1) forms under the complex structure $J_3$. A $(1,0)$ vector field dual to the $(1,0)$ form above is then $\dfrac{1}{2}\left(\partial_{\theta'_m} + iR/V_0 \partial_{\theta_e}\right)$. In particular,
\[ J_3(\partial_{\theta'_m}) = -\frac{R}{V_0} \partial_{\theta_e}, \hspace{5 mm} J_3 \left(-\frac{R}{V_0}\partial_{\theta_e} \right) = -\partial_{\theta'_m}. \]
With this and (\ref{symp3}) we can reconstruct the metric at $a = 0$. Observe that
\begin{align*}
g(\partial_{\theta_e}, \partial_{\theta_e}) & = \omega_3(\partial_{\theta_e}, J_3(\partial_{\theta_e})) = \omega_3\left(\partial_{\theta_e}, \frac{V_0}{R}\partial_{\theta'_m}\right) = \frac{V_0}{4\pi^2 R^2} \\
g(\partial_{\theta'_m}, \partial_{\theta'_m}) & = \omega_3(\partial_{\theta'_m}, J_3(\partial_{\theta'_m})) = \omega_3\left(\partial_{\theta'_m}, -\frac{R}{V_0}\partial_{\theta_e}\right) = \frac{1}{4\pi^2 V_0}
\end{align*}

Consequently,
\[ g = \frac{1}{V_0} \left( \frac{d\theta'_m}{2\pi}\right)^2 + V_0 d\vec{x}^2, \]
where $a = x^1 + ix^2, \theta_e = 2\pi R x^3$. Since $V_0(\theta_e)$ is undefined for $\theta_e = 0$, we have to check that $g$ extends to this point. Let $(r,\vartheta, \phi)$ denote spherical coordinates for $\vec{x}$. The formula above is the natural extension of the metric given in \cite{gaiotto} for nonzero $a$:
\[ g = \frac{1}{V(\vec{x})} \left( \frac{d\theta'_m}{2\pi} + A'(\vec{x})\right)^2 + V(\vec{x}) d\vec{x}^2 \]
To see that this extends to $r =0$, we rewrite
\begin{align}
V & = \frac{R}{4\pi}\left[ \frac{1}{\sqrt{R^2 |a|^2 + \frac{\theta_e^2}{4\pi^2}}} + \sum_{n \neq 0} \left(
 \frac{1}{\sqrt{R^2 |a|^2 + (\frac{\theta_e}{2\pi} + n)^2}} - \kappa_n \right)\right] \notag\\
 & = \frac{1}{4\pi}\left[ \frac{1}{\sqrt{ |a|^2 + \frac{\theta_e^2}{4R^2 \pi^2}}} + R\sum_{n \neq 0} \left(
 \frac{1}{\sqrt{R^2 |a|^2 + (\frac{\theta_e}{2\pi} + n)^2}} - \kappa_n \right) \right] \notag\\
 & = \frac{1}{4\pi} \left( \frac{1}{r} + C(\vec{x}) \right), \label{vtaub}
\end{align}
where $C(\vec{x})$ is smooth and bounded in a neighborhood of the origin.

Similarly, we do Poisson re-summation for the unitary connection

\[ A' = - \frac{1}{4\pi} \left( \frac{da}{a} - \frac{d\bar{a}}{\bar{a}}\right) \left[ \frac{i(\theta_e - \pi)}{2\pi} + R \sum_{n \neq 0} (\text{sgn} \,n) e^{in\theta_e} |a| K_1(2\pi R|na|) \right]. \]

Using the fact that the inverse Fourier transform of $(\text{sgn }\xi)e^{i\theta_e \xi}|a|K_1(2\pi R|a\xi|)$ is
\[ \frac{i(\frac{\theta_e}{2\pi} + t)}{2R\sqrt{R^2|a|^2 + ( \frac{\theta_e}{2\pi} + t)^2}}, \]
we obtain
\begin{align}
A' & = - \frac{i}{8\pi} \left( \frac{da}{a} - \frac{d\bar{a}}{\bar{a}}\right)\sum_{n = -\infty}^\infty \left( \dfrac{ \frac{\theta_e}{2\pi} + n}{\sqrt{R^2 |a|^2 + (\frac{\theta_e}{2\pi} + n)^2}} - \kappa_n  \right) \notag\\
& = \frac{1}{4\pi}\left( \frac{da}{a} - \frac{d\bar{a}}{\bar{a}}\right)\left[-\frac{i\theta_e}{4\pi \sqrt{R^2 |a|^2 + \left(  
 \frac{\theta_e}{2\pi}\right)^2}} - \frac{i}{2}\sum_{n \neq 0} \left( \dfrac{ \frac{\theta_e}{2\pi} + n}{\sqrt{R^2 |a|^2 + (\frac{\theta_e}{2\pi} + n)^2}} - \kappa_n  \right)\right] \notag\\
 \intertext{since $d\phi = d\arg a = -id\log \dfrac{a}{|a|} = -\dfrac{i}{2}\left(\dfrac{da}{a} - \dfrac{d\bar{a}}{\bar{a}} \right)$ and $\cos \vartheta = \dfrac{x^3}{r}$, this simplifies to:}
 & = \frac{1}{4\pi}(\cos \vartheta + D(\vec{x}))d\phi. \label{ataub}
\end{align}
Here $\kappa_n$ is a regularization constant that makes the sum converge, and $D(\vec{x})$ is smooth and bounded in a neighborhood of $r = 0$. By (\ref{vtaub}) and (\ref{ataub}), it follows that near $r = 0$
\begin{align*}
g & = V^{-1}\left( \frac{d\theta'_m}{2\pi} + A' \right)^2 + Vd\vec{x}^2\\
& = 4\pi \left( \frac{1}{r} + C \right)^{-1} \left( \frac{d\theta'_m}{2\pi} + \frac{1}{4\pi}\cos \vartheta d\phi + D d\phi \right)^2 + \frac{1}{4\pi} \left( \frac{1}{r} + C \right) d\vec{x}^2\\
& = \frac{1}{4\pi}\left[ \left( \frac{1}{r} + C \right)^{-1} \left( 2d\theta'_m + \cos \vartheta d\phi + \tilde{D} d\phi \right)^2
 + \left( \frac{1}{r} + C \right) d\vec{x}^2 \right]\\
& = \frac{1}{4\pi} g_{\text{Taub-NUT}} + \text{smooth corrections}.
\end{align*}
This shows that our metric extends to $r = 0$ and finishes the construction of the singular fiber.
\end{proof}

\subsection{General case}\label{genextmtr}

Here we work with the assumption in subsection \ref{genvafa}. To distinguish this case to the previous one, we will denote by $\varpi_\text{old}, g_\text{old}$, etc. the forms obtained in the classical case.

Let $C := -i/2 + \pi f'(0)$ and let
\[ B_0 = V_0 + \frac{R \, \text{Im }C}{\pi}. \]
We will see that, to extend the holomorphic symplectic form $\varpi(\zeta)$  and consequently the hyperk\"{a}hler metric $g$ to $\mathcal{M}$, it is necessary to impose a restriction on the class of functions $f(a)$ on $\mathcal{B}$ for the generalized Ooguri-Vafa case.

\begin{theorem}
In the General Ooguri-Vafa case, the holomorphic symplectic form $\varpi(\zeta)$ and the hyperk\"{a}hler metric $g$ extend to $\mathcal{M}$, at least for the set of functions $f(a)$ as in \S \ref{genvafa} with $f'(0) > B_0$.
\end{theorem}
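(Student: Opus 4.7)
The plan is to reduce the generalized case to the classical Ooguri-Vafa case treated in \S\ref{clasmet}, viewing the $f$-dependence as a smooth perturbation of the semiflat sector. Since $\mathcal{X}_e = \mathcal{X}_e^{\text{sf}}$ is independent of $f$ and the integral corrections to $\mathcal{X}_m$ depend only on $\mathcal{X}_e$ and $\mathcal{X}_e^{-1}$, the only modification of the magnetic Darboux coordinate is a multiplicative semiflat factor coming from \eqref{newxmsf}:
\[ \widetilde{\mathcal{X}}_m^{\text{gen}} = \widetilde{\mathcal{X}}_m^{\text{clas}}\cdot\exp\!\left(\frac{\pi R f(a)}{\zeta} + \pi R \zeta\,\overline{f(a)}\right). \]
Because $f$ extends holomorphically to all of $\mathcal{B}$ (in particular at $a=0$), the exponential factor is $C^\infty$ on $\mathcal{B}$. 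Combined with Theorem \ref{mprtom} and the $C^1$ extension argument of \S\ref{c1ext}, this immediately yields a continuous extension of $\widetilde{\mathcal{X}}_m^{\text{gen}}$ to $\mathcal{M}$, consistent with the limiting formula \eqref{genov}.

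Next I would compute
\[ \varpi^{\text{gen}}(\zeta) = \varpi^{\text{clas}}(\zeta) + \Delta\varpi(\zeta), \qquad \Delta\varpi(\zeta) = -\frac{1}{4\pi}\,\frac{d\mathcal{X}_e}{\mathcal{X}_e}\wedge\left[\frac{f'(a)}{\zeta}\,da + \zeta\,\overline{f'(a)}\,d\bar{a}\right], \]
and match powers of $\zeta^{-1},\zeta^0,\zeta$ against the twistor decomposition $\varpi = -\tfrac{i}{2\zeta}\omega_+ + \omega_3 - \tfrac{i\zeta}{2}\omega_-$ to read off the modifications $\Delta\omega_\pm,\Delta\omega_3$. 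All the coefficients that appear are smooth functions of $(a,\bar{a},\theta_e)$ near $a=0$ by holomorphy of $f$, so $\varpi^{\text{gen}}(\zeta)$ admits a smooth extension to the singular fiber. This disposes of the first half of the theorem.

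For the hyperk\"{a}hler metric I would then repeat the Gibbons-Hawking reconstruction from the classical proof, but with $\omega_+^{\text{gen}},\omega_3^{\text{gen}},\omega_-^{\text{gen}}$ in place of their classical counterparts. Because $\Delta\omega_\pm$ and $\Delta\omega_3$ are structurally of the same shape as the old symplectic forms \eqref{symp+}--\eqref{symp-}, \eqref{symp3}, the net effect is to replace the potential $V$ and connection $A'$ of the classical case by a smoothly perturbed pair $(B,A_{\text{new}})$, where at the origin
\[ B_0 = V_0 + \frac{R\,\text{Im}\,C}{\pi}, \qquad C = -\frac{i}{2} + \pi f'(0), \]
and $dA_{\text{new}} = {*}\,dB$ still holds (by an identical Poisson resummation of the $f'$-corrected integrals). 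The generalized metric then takes the same Gibbons-Hawking form
\[ g = B^{-1}\!\left(\frac{d\theta'_m}{2\pi} + A_{\text{new}}\right)^{\!2} + B\,d\vec{x}^{\,2}, \]
whose singular Taub-NUT part is inherited unchanged from the classical case (it comes from the $Z_{\gamma_e}=a$ piece, which is unaffected by $f$), plus smooth corrections.

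The main obstacle, and the reason the hypothesis $f'(0) > B_0$ enters, is verifying that $g$ is genuinely Riemannian rather than merely smooth: this requires $B > 0$ on a whole neighborhood of the singular fiber, not only at $\vec{x}=0$. The explicit formula for $B_0$ together with the restriction $f'(0) > B_0$ will force $B_0 > 0$ and, by continuity of the smooth perturbation, propagate positivity of $B$ to a neighborhood of $a=0$; checking that this precise inequality is exactly what is needed to dominate the $f$-induced correction to $V$ in \eqref{potent} is the delicate bookkeeping step, but it should follow from a direct comparison with the explicit sum defining $V$ in the classical case.
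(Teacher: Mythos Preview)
Your overall strategy---reduce to the classical case by isolating the $f$-dependent semiflat factor, then decompose $\varpi^{\text{gen}}(\zeta)$ and reconstruct the metric---matches the paper's. The extension of $\varpi(\zeta)$ is fine for the reasons you give.

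However, there is a genuine gap in the metric reconstruction. You assert that the generalized metric is again of Gibbons--Hawking form
\[
g = B^{-1}\!\left(\frac{d\theta'_m}{2\pi} + A_{\text{new}}\right)^{\!2} + B\,d\vec{x}^{\,2},
\]
with $B$ a real perturbation of $V$. This is not what happens. When you add $\Delta\omega_\pm$ to \eqref{symp+}--\eqref{symp-}, the $(1,0)$ form with respect to $J_3$ becomes
\[
d\theta'_m - \frac{i}{R}\Bigl(V_0 - \frac{iRC}{\pi}\Bigr)d\theta_e,
\]
and since $C=-\tfrac{i}{2}+\pi f'(0)$ is genuinely complex, the coefficient of $d\theta_e$ is no longer purely imaginary times a real potential. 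When one works out $J_3$ on $\partial_{\theta'_m},\partial_{\theta_e}$ and then $g(\partial_{\theta_e},\partial_{\theta_e})$ from $\omega_3$, the result is
\[
g = \frac{1}{B_0}\left(\frac{d\theta'_m}{2\pi}\right)^{\!2} + B_0\,d\vec{x}^{\,2} + \left(\frac{R\,\mathrm{Re}\,C}{\pi}\right)^{\!2}\frac{dx_3^2}{B_0},
\]
with an \emph{extra anisotropic} $dx_3^2/B_0$ term driven by $\mathrm{Re}\,C$. So there is no modified connection $A_{\text{new}}$ with $dA_{\text{new}}={*}\,dB$ reproducing this metric, and your Poisson-resummation claim for $A_{\text{new}}$ does not apply. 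The extension to $\theta_e=0$ then relies on $\lim_{\theta_e\to 0}B_0=\infty$ (inherited from $V_0$), which kills the extra term at the nodal point and leaves Taub--NUT plus smooth corrections; positivity of $B_0$ elsewhere is what makes $g$ Riemannian. Your account of the role of the hypothesis on $f'(0)$ is accordingly too coarse: the obstruction is not just positivity of a Gibbons--Hawking potential but control of this additional term as well.
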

\begin{proof}
 By formula \eqref{newxmsf},
\begin{equation}\label{dlogxm}
d \log \mathcal{X}_m^\text{sf} = d \log \mathcal{X}_{m, \text{old}}^\text{sf} + \frac{R}{\zeta}\left( -\frac{i}{2} + \pi f'(a) \right)da + R\zeta \left( \frac{i}{2} + \pi \overline{f'(a)} \right)d\overline{a}
\end{equation}

Recall that the corrections of $\mathcal{X}_m$ are the same as the classical Ooguri-Vafa case. Thus, using \eqref{dlogxm}, at $a = 0$
\begin{equation*}
\varpi(\zeta) = \varpi_\text{old}(\zeta) + \frac{iR }{2\pi} \text{Im } C da \wedge d\overline{a} + \frac{i C}{4\pi^2 \zeta}
   da \wedge d\theta_e + \frac{i \zeta \overline{C}}{4\pi^2} d\overline{a} \wedge d\theta_e.
\end{equation*}

Decomposing $\varpi(\zeta) = -i/2\zeta \omega_+ + \omega_3 -i\zeta /2 \omega_-$, we obtain:
\begin{align}
\omega_3 & = \omega_{3, \text{old}} + \frac{i R}{2\pi} \text{Im } C da \wedge d\overline{a}, \label{omeg3}\\
\omega_+ & = \omega_{+, \text{old}} - \frac{C}{2\pi^2} da \wedge d\theta_e \label{newomp}\\
\omega_- & = \omega_{-, \text{old}} - \frac{\overline{C}}{2\pi^2} d\overline{a} \wedge d\theta_e \label{newomm}
\end{align}

By \eqref{newomp} and \eqref{newomm},
\[ d\theta'_m - \frac{i}{R}\left( V_0 - \frac{iRC}{\pi} \right)d\theta_e \hspace{6 mm} \text{and} \hspace{6 mm} d\theta'_m + \frac{i}{R}\left( V_0 + \frac{iR\overline{C}}{\pi} \right)d\theta_e \]
are, respectively, (1,0) and (0,1) forms. It's not hard to see that
\begin{align*}
\frac{-V_0 \pi -iR \overline{C}}{R\pi}\partial_{\theta'_m} &  - i\partial_{\theta_e} \\
\intertext{or, rearranging real parts,}
\left( -\frac{V_0}{R} -\frac{\text{Im }C}{\pi} \right) \partial_{\theta'_m} & -i \left( \frac{\text{Re }C}{\pi} \partial_{\theta'_m} + \partial_{\theta_e}\right)
\end{align*}
is a $(1,0)$ vector field. This allow us to obtain
\begin{align*}
J_3\left[ \left( -\frac{V_0}{R} -\frac{\text{Im }C}{\pi} \right) \partial_{\theta'_m} \right] & = \frac{\text{Re }C}{\pi} \partial_{\theta'_m} + \partial_{\theta_e}\\
J_3\left[\frac{\text{Re }C}{\pi} \partial_{\theta'_m} + \partial_{\theta_e} \right] & = \left( \frac{V_0}{R} +\frac{\text{Im }C}{\pi} \right) \partial_{\theta'_m}.
\end{align*}
By linearity,
\begin{align*}
J_3(\partial_{\theta'_m}) & = \text{const} \cdot \partial_{\theta'_m} - \frac{R\pi}{V_0 \pi + R\text{Im }C} \partial_{\theta_e}\\
J_3(\partial_{\theta_e}) & = \left( \frac{V_0 \pi + R\text{Im }C }{\pi R} + \frac{(\text{Re }C)^2 R}{\pi(V_0 \pi 
  + R\text{Im }C)} \right)\partial_{\theta'_m} + \text{const} \cdot \partial_{\theta_e}.
\end{align*}
With this we can compute
\begin{align*}
g(\partial_{\theta'_m}, \partial_{\theta'_m}) & = \omega_3(\partial_{\theta'_m}, J_3(\partial_{\theta'_m}))\\
& = \frac{1}{4\pi(V_0 \pi + R\text{Im }C)}\\
g(\partial_{\theta_e}, \partial_{\theta_e}) & = \omega_3(\partial_{\theta_e}, J_3(\partial_{\theta_e}))\\
& = \frac{V_0 \pi + R\text{Im }C}{4\pi^3 R^2} + \frac{(\text{Re }C)^2}{4\pi^3(V_0 \pi + R\text{Im }C)}\\
& = \frac{B_0}{4\pi^3 R^2} + \frac{(\text{Re }C)^2}{4\pi^3 B_0}
\end{align*}

We can see that, if $B_0 > 0$, the metric at $a = 0$ is
\begin{equation}
g = \frac{1}{B_0} \left( \frac{d\theta'_m}{2\pi}\right)^2 + B_0 d\vec{x}^2 + \left(\frac{R\cdot\text{Re }C}{\pi}\right)^2 \frac{dx_3^2}{B_0}.
\end{equation}
This metric can be extended to the point $\theta_e = 0$ ($r = 0$ in \S \ref{clasmet}) exactly as before, by writing $g$ as the Taub-NUT metric plus smooth corrections and observing that, since $\lim_{\theta_e \to 0} B_0 = \infty$,
\[ \lim_{\theta_e \to 0} \left(\frac{R\cdot\text{Re }C}{\pi}\right)^2 \frac{dx_3^2}{B_0} = 0. \]
\end{proof}

\section{The Pentagon case}\label{pent}

\subsection{Monodromy Data}\label{solut}

Now we will extend the results of the Ooguri-Vafa case to the general problem. We will start with the Pentagon example. This  example is presented in detail in \cite{notes}. By \cite{wkb}, this example represents the moduli space of Higgs bundles with gauge group $\text{SU}(2)$ over $\cpone$ with 1 irregular singularity at $z = \infty$.

Here $\mathcal{B} = \cone$ with discriminant locus a 2-point set, which we can assume is $\{-2,2\}$  in the complex plane. Thus $\mathcal{B}'$ is the twice-punctured plane. $\mathcal{B}$ is divided into two domains $\mathcal{B}_{\text{in}}$ and $\mathcal{B}_{\text{out}}$ by the locus
\[ W = \{u : Z(\Gamma_u) \text{ is contained in a line in } \cone \} \subset \mathcal{B} \]
See Figure \ref{walls}. Since $\mathcal{B}_{\text{in}}$ is simply connected $\Gamma$ can be trivialized over $\mathcal{B}_{\text{in}}$ by primitive cycles $\gamma_1, \gamma_2$, with $Z_{\gamma_1} = 0$ at $u = -2$, $Z_{\gamma_2} = 0$ at $u = 2$. We can choose them also so that $\left\langle \gamma_1, \gamma_2 \right\rangle = 1$. 

\begin{figure}[htbp]
	\centering
		\includegraphics[width=0.50\textwidth]{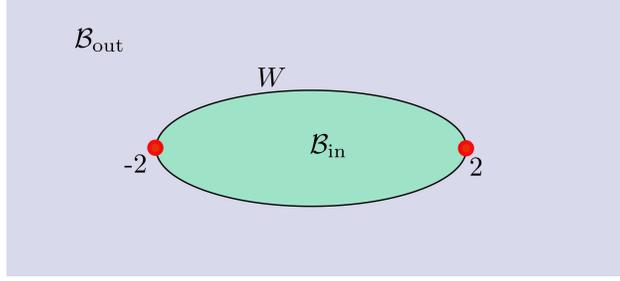}
	\caption{The wall $W$ in $\mathcal{B}$ for the Pentagon case}
	\label{walls}
\end{figure}

Take the set $\{\gamma_1, \gamma_2\}$. To compute its monodromy around infinity, take cuts at each point of $D = \{-2,2\}$ (see Figure \ref{moninf}) and move counterclockwise. By \eqref{piclf}, the jump of $\gamma_2$ when you cross the cut at $-2$ is of the form $\gamma_2 \mapsto \gamma_1 + \gamma_2$. As you return to the original place and cross the cut at $2$, the jump of $\gamma_1$ is of the type $\gamma_1 \mapsto \gamma_1 - \gamma_2$.

\begin{figure}[htbp]
	\centering
		\includegraphics[width=0.50\textwidth]{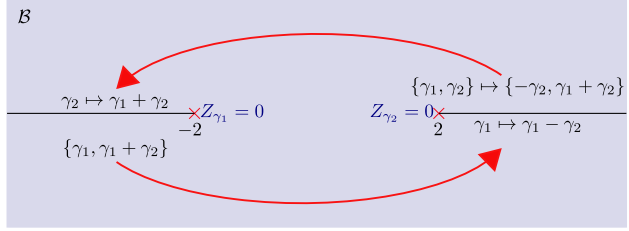}
	\caption{The monodromy around infinity of $\Gamma$}
	\label{moninf}
\end{figure}

Thus, around infinity, $\{\gamma_1, \gamma_2\}$ transforms into $\{-\gamma_2, \gamma_1 + \gamma_2\}$. The set $\{\gamma_1, \gamma_2, -\gamma_1, -\gamma_2, \gamma_1 + \gamma_2, -\gamma_1 - \gamma_2\}$ is therefore invariant under monodromy at infinity and it makes global sense to define

  \begin{align}
  \text{For $u \in \mathcal{B}_\text{in}$}, \hspace{5 mm} \Omega(\gamma; u) = & \left\{ \begin{array}{ll}
    1 & \text{for } \gamma \in \{ \gamma_1, \gamma_2, -\gamma_1, -\gamma_2\}\\
    0 & \text{otherwise}
  \end{array} \right. \notag\\
 \text{For $u \in \mathcal{B}_\text{out}$} , \hspace{5 mm} \Omega(\gamma; u) = & \left\{ \begin{array}{ll}
    1 & \text{for } \gamma \in \{ \gamma_1, \gamma_2, -\gamma_1, -\gamma_2, \gamma_1 + \gamma_2, -\gamma_1 - \gamma_2\}\\
    0 & \text{otherwise}
  \end{array}  \right. \label{omgpar}
  \end{align}

Let $\mathcal{M}'$ denote the torus fibration over $\mathcal{B}'$ constructed in \cite{notes}. Near $u=2$, we'll denote $\gamma_1$ by $\gamma_m$ and $\gamma_2$ by $\gamma_e$ (the labels will change for $u = - 2$). To shorten notation, we'll write $\ell_{e}, Z_e$, etc. instead of $\ell_{\gamma_e}, Z_{\gamma_e}$, etc. Let $\theta$ denote the vector of torus coordinates $(\theta_e, \theta_m)$. With the change of variables $a := Z_e(u)$ we can assume, without loss of generality, that the bad fiber is at $a = 0$ and
\begin{equation}\label{zmnotz}
\lim_{a \to 0} Z_m(a) = c \neq 0.
\end{equation}
Let $T$ denote the complex torus fibration over $\mathcal{M}'$ constructed in \cite{gaiotto}. By the definition of $\Omega(\gamma; a)$, the functions $(\mathcal{X}_e, \mathcal{X}_m)$ both receive corrections. Recall that by \eqref{recurs}, for each $\nu \in \mathbb{N}$, we get a function $\mathcal{X}_\gamma^{(\nu)}$, which is the $\nu$-th iteration of the function $\mathcal{X}_\gamma$. We can write
\[ \mathcal{X}_\gamma^{(\nu)}(a, \zeta, \theta) = \mathcal{X}_\gamma^{\text{sf}}(a, \zeta, \theta)C_{\gamma}^{(\nu)}(a, \zeta, \theta). \]
It will be convenient to rewrite the above equation as in \cite[C.17]{gaiotto}. For that, let $\upsnu$ be the map from $\mathcal{M}_a$ to its complexification $\mathcal{M}_a^{\cone}$ such that
\begin{equation}\label{upsi}
 \mathcal{X}_\gamma^{(\nu)}(a, \zeta, \theta) = \mathcal{X}_\gamma^{\text{sf}}(a, \zeta, \upsnu).
\end{equation}

We'll do a modification in the construction of \cite{gaiotto} as follows: We'll use the term ``BPS ray'' for each ray $\{\ell_\gamma : \Omega(\gamma,a) \neq 0 \}$ as in \cite{gaiotto}. This terminology comes from Physics. In the language of Riemann-Hilbert problems, these are known as ``anti-Stokes'' rays. That is, they represent the contour $\Sigma$ where a function has prescribed discontinuities.

The problem is local on $\mathcal{B}$, so instead of defining a Riemann-Hilbert problem using the BPS rays $\ell_\gamma$, we will cover $\mathcal{B}'$ with open sets $\{U_\alpha : \alpha \in \Delta \}$ such that for each $\alpha$, $\overline{U_\alpha}$ is compact, $\overline{U_\alpha} \subset V_\alpha$, with $V_\alpha$ open and $\left. \mathcal{M}' \right|_{V_\alpha}$ a trivial fibration. For any ray $r$ in the $\zeta$-plane, define $\mathbb{H}_r$ as the half-plane of vectors making an acute angle with $r$. Assume that there is a pair of rays $r, -r$ such that for all $a \in U_\alpha$, half of the rays  lie inside $\mathbb{H}_r$ and the other half lie in $\mathbb{H}_{-r}$. We call such rays \textit{admissible rays}. If $U_\alpha$ is small enough, there exists admissible rays for such a neighborhood. We are allowing the case that $r$ is a BPS ray $\ell_\gamma$, as long as it satisfies the above condition. As $a$ varies in $U_\alpha$, some BPS rays (or anti-Stokes rays, in RH terminology) converge into a single ray (wall-crossing phenomenon) (see Figures \ref{3rays} and \ref{2rays}). 

\begin{figure}[htbp]
	\centering
		\includegraphics[width=0.40\textwidth]{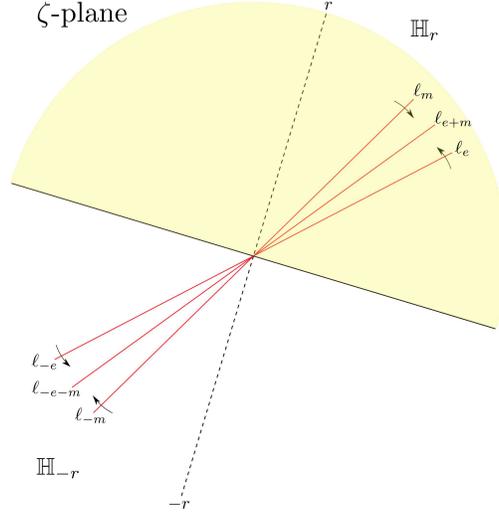}
	\caption{3 anti-Stokes rays before hitting the wall}
	\label{3rays}
\end{figure}

\begin{figure}[htbp]
	\centering
		\includegraphics[width=0.40\textwidth]{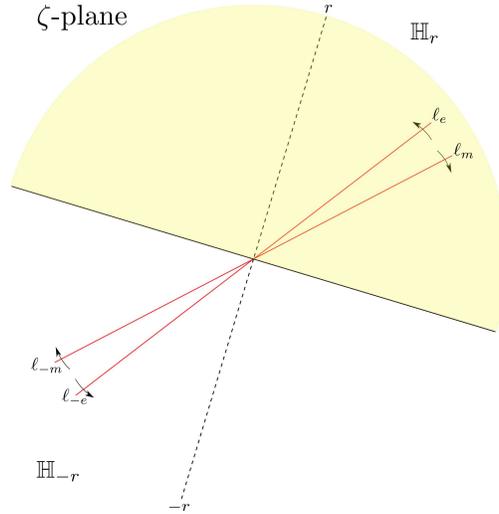}
	\caption{At the other side of the wall there are only 2 anti-Stokes rays}
	\label{2rays}
\end{figure}

 For $\gamma \in \Gamma$, we define $\gamma > 0$ (resp. $\gamma < 0$) as $\ell_\gamma \in \mathbb{H}_r$ (resp. $\ell_\gamma \in \mathbb{H}_{-r}$). Our Riemann-Hilbert problem will have only two anti-Stokes rays, namely $r$ and $-r$. The specific discontinuities at the anti-Stokes rays for the function we're trying to obtain are called \textit{Stokes factors} (see \cite{boalch}). In \eqref{invjmp}, the Stokes factor was given by $S^{-1}_\ell$.
 
 In this case, the Stokes factors are the concatenation of all the Stokes factors $S^{-1}_\ell$ in \eqref{stkfac} in the counterclockwise direction:
\begin{align*}
S_+ & = \prod^\ccwarrow_{\gamma > 0}{\mathcal{K}^{\Omega(\gamma; a)}_\gamma}\\
S_- & = \prod^\ccwarrow_{\gamma < 0}{\mathcal{K}^{\Omega(\gamma; a)}_\gamma}
\end{align*}

We will denote the solutions of this Riemann-Hilbert problem by $\mathcal{Y}$. As in \eqref{upsi}, we can write $\mathcal{Y}$ as
\begin{equation}\label{thet}
 \mathcal{Y}_\gamma(a, \zeta, \theta) = \mathcal{X}_\gamma^{\text{sf}}(a, \zeta, \Theta),
\end{equation}
for $\Theta : \mathcal{M}_a \to \mathcal{M}_a^\cone$.

A different choice of admissible pairs $r', -r'$ gives an equivalent Riemann-Hilbert problem, where the two solutions $\mathcal{Y}, \mathcal{Y}'$ differ only for $\zeta$ in the sector defined by the rays $r,r'$, and one can be obtained from the other by analytic continuation. 

In the case of the Pentagon, we have two types of wall-crossing phenomenon. Namely, as $a$ varies, $\ell_e$ moves in the $\zeta$-plane until it coincides with the $\ell_m$ ray for some value of $a$ in the wall of marginal stability (Fig. \ref{3rays} and \ref{2rays}). We'll call this type I of wall-crossing. In this case we have the Pentagon identity
\begin{equation}\label{pentid}
 \mathcal{K}_e \mathcal{K}_m = \mathcal{K}_m \mathcal{K}_{e+m} \mathcal{K}_e,
\end{equation}
As $a$ goes around 0, the $\ell_e$ ray will then intersect with the $\ell_{-m}$ ray now. Because of the monodromy $\gamma_m \mapsto \gamma_{-e+m}$ around 0, $\ell_m$ becomes $\ell_{-e+m}$. This second type (type II) of wall-crossing is illustrated in Fig. \ref{2rays2} and \ref{3rays2}.

\begin{figure}[htbp]
	\centering
		\includegraphics[width=0.40\textwidth]{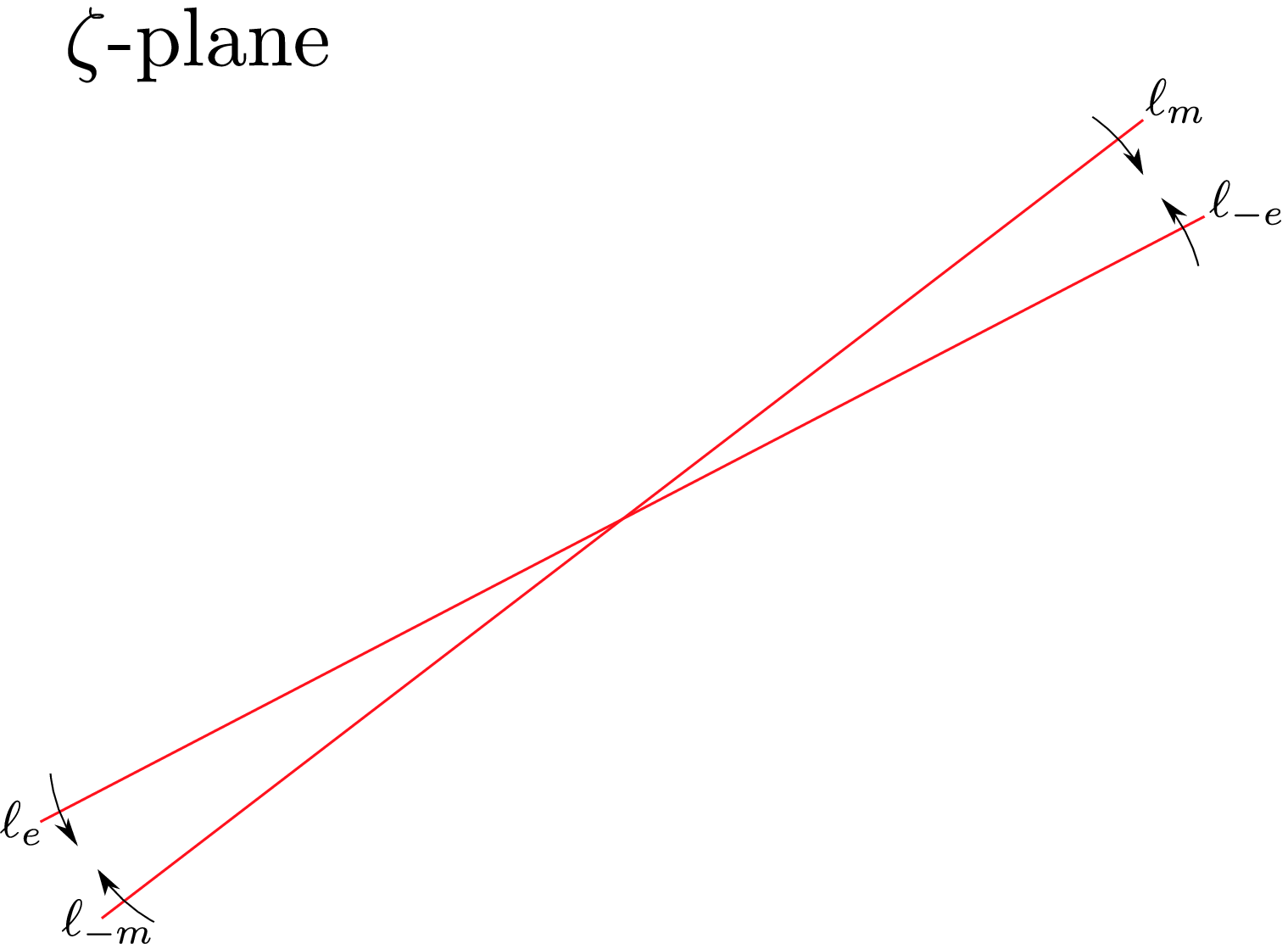}
	\caption{2 anti-Stokes rays before hitting the wall}
	\label{2rays2}
\end{figure}

\begin{figure}[htbp]
	\centering
		\includegraphics[width=0.40\textwidth]{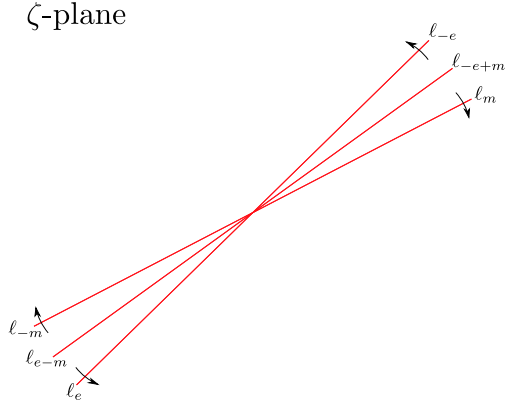}
	\caption{At the other side of the wall there are now 3 anti-Stokes rays}
	\label{3rays2}
\end{figure}

This gives a second Pentagon identity
\[ \mathcal{K}_{-e} \mathcal{K}_m = \mathcal{K}_m \mathcal{K}_{-e+m} \mathcal{K}_{-e} \]

In any case, the Stokes factors above remain the same even if $a$ is in the wall of marginal stability. The way we defined $S_+, S_-$ makes this true for the general case also.

Specifically, in the Pentagon the two Stokes factors for the first type of wall-crossing are given by the maps:
\begin{align}
\left. \begin{array}{ll}
          \mathcal{Y}_m & \mapsto \mathcal{Y}_m(1-\mathcal{Y}_e(1-\mathcal{Y}_m))^{-1} \\
          \mathcal{Y}_e & \mapsto \mathcal{Y}_e(1-\mathcal{Y}_m)
          \end{array}  \right\} & S_+ \label{newj1}\\
\intertext{and, similarly}
\left. \begin{array}{ll}
          \mathcal{Y}_m & \mapsto \mathcal{Y}_m(1-\mathcal{Y}^{-1}_e(1-\mathcal{Y}^{-1}_m))\\
          \mathcal{Y}_e & \mapsto \mathcal{Y}_e(1-\mathcal{Y}^{-1}_m)^{-1}
          \end{array} \right\} & S_- \label{newj2}
\end{align}

\noindent For the second type:

\begin{align}
\left. \begin{array}{ll}
          \mathcal{Y}_m & \mapsto \mathcal{Y}_m(1-\mathcal{Y}^{-1}_e) \\
          \mathcal{Y}_e & \mapsto \mathcal{Y}_e(1-\mathcal{Y}_m(1-\mathcal{Y}^{-1}_e))
          \end{array}  \right\} & S_+ \label{2newj1}\\
\left. \begin{array}{ll}
          \mathcal{Y}_m & \mapsto \mathcal{Y}_m(1-\mathcal{Y}_e)^{-1}\\
          \mathcal{Y}_e & \mapsto \mathcal{Y}_e(1-\mathcal{Y}^{-1}_m(1-\mathcal{Y}_e))^{-1}
          \end{array} \right\} & S_- \label{2newj2}
\end{align}

\subsection{Solutions}

In \cite{rhprob} we prove the following theorem (in fact, a more general version is proven).

\begin{theorem}\label{yfunctions}
There exist functions $ \mathcal{Y}_m(a, \zeta, \theta_e, \theta_m),  \mathcal{Y}_e(a, \zeta, \theta_e, \theta_m)$ defined for $a \neq 0$,  smooth on $a$, $\theta_e$ and $\theta_m$. The functions are sectionally analytic on $\zeta$ and obey the jump condition
\[ \begin{array}{rll}
	\mathcal{Y}^+ & = S_+ \mathcal{Y}^-, & \qquad \text{along $r$} \\
	\mathcal{Y}^+ & = S_{-}  \mathcal{Y}^-, & \qquad \text{along $-r$}
	\end{array} \]
Moreover, $\mathcal{Y}_m,  \mathcal{Y}_e$ obey the reality condition \eqref{realcond} and the asymptotic condition \ref{asymptotic}.
\end{theorem}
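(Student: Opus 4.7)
\textbf{Proof plan for Theorem \ref{yfunctions}.}

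The plan is to convert the Riemann-Hilbert problem with jumps $S_{\pm}$ across the pair of admissible rays $\pm r$ into a nonlinear integral equation for the ratio $\log(\mathcal{Y}_\gamma/\mathcal{X}^{\text{sf}}_\gamma)$, and then solve it by Banach contraction for $R$ large, as in Theorem \ref{smooth}. First I would expand the explicit jumps \eqref{newj1}--\eqref{2newj2}; since each Stokes factor $S_\pm$ is a finite composition of the birational maps $\mathcal{K}_{\gamma'}$ with $\pm\gamma'>0$, the logarithm $\log(S_{\pm}\mathcal{Y})_\gamma - \log\mathcal{Y}_\gamma$ is a finite $\mathbb{Z}$-linear combination of terms of the form $\log(1-\mathcal{Y}_{\gamma'})$ with $\Omega(\gamma';a)\neq 0$. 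Applying the Plemelj formula with the twistor-compatible kernel $\tfrac{d\zeta'}{\zeta'}\tfrac{\zeta'+\zeta}{\zeta'-\zeta}$ (which vanishes at $\zeta=0,\infty$, consistent with the asymptotic condition \ref{asymptotic} and the reality condition \eqref{realcond}) turns the RH problem into a closed integral equation
\[
\mathcal{Y}_\gamma(\zeta)=\mathcal{X}^{\text{sf}}_\gamma(\zeta)\exp\Bigl[-\tfrac{1}{4\pi i}\!\int_{r}\tfrac{d\zeta'}{\zeta'}\tfrac{\zeta'+\zeta}{\zeta'-\zeta}\bigl(\log S_+\mathcal{Y}-\log\mathcal{Y}\bigr)_\gamma -\tfrac{1}{4\pi i}\!\int_{-r}\tfrac{d\zeta'}{\zeta'}\tfrac{\zeta'+\zeta}{\zeta'-\zeta}\bigl(\log S_-\mathcal{Y}-\log\mathcal{Y}\bigr)_\gamma\Bigr],
\]
parallel to \eqref{inteq} but now with only two contour components.

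Next I would set up the Picard iteration $\mathcal{Y}^{(\nu+1)}=T(\mathcal{Y}^{(\nu)})$, with $\mathcal{Y}^{(0)}=\mathcal{X}^{\text{sf}}$, on the Banach space $\xstar$ of sectionally holomorphic maps $\mathcal{Y}:\cone^{\times}\to T_a$ whose ratio $\mathcal{Y}_\gamma/\mathcal{X}^{\text{sf}}_\gamma$ is continuous and bounded, with norm $\|\mathcal{Y}\|=\sup_{\gamma\in J,\zeta}|\log(\mathcal{Y}_\gamma/\mathcal{X}^{\text{sf}}_\gamma)|$, $J=\{\gamma:\Omega(\gamma;a)\neq 0\}$. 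Expanding $\log(1-\mathcal{Y}_{\gamma'})=-\sum_{n\ge 1}\mathcal{Y}_{\gamma'}^{n}/n$ and using the exponential decay $|\mathcal{X}^{\text{sf}}_{\gamma'}(\zeta')|\le\exp\!\bigl(-\pi R\,|Z_{\gamma'}|(|\zeta'|^{-1}+|\zeta'|)\cos\delta\bigr)$ along the rays (with $\delta$ the minimum angle between $\ell_{\gamma'}$ and $\pm r$), the integrals defining $T$ are bounded by a constant times $R^{-1}$. Hence for $R>R_0(|Z_\gamma(a)|,\gamma\in J)$, $T$ is a strict contraction on a small ball of $\xstar$, producing a unique fixed point $\mathcal{Y}$. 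Piecewise analyticity in $\zeta$ and the jump condition across $\pm r$ follow from the standard Plemelj-Sokhotski jump, while the asymptotic condition \ref{asymptotic} follows from the vanishing of the kernel at $0$ and $\infty$.

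To get the reality condition \eqref{realcond}, I would note that the kernel and the rays $\pm r$ are both invariant under $\zeta\mapsto-1/\bar\zeta$ once composed with $\gamma\mapsto-\gamma$, and that $\mathcal{X}^{\text{sf}}$ satisfies the analogous identity. Then $\overline{\mathcal{Y}_{-\gamma}(-1/\bar\zeta)}$ solves the same integral equation, so uniqueness of the fixed point gives \eqref{realcond}. For smoothness in $(a,\theta_e,\theta_m)$, I would differentiate the integral equation formally, obtaining a linear integral equation for each partial derivative; the operator on the right is the differential of $T$ at $\mathcal{Y}$, which is again a contraction by the same estimate, so the derivatives exist, are bounded, and depend continuously on all parameters. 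Arzela-Ascoli on compact subsets of $U_\alpha$ then patches these derivatives into honest smooth functions, iterating to obtain all higher derivatives.

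The main obstacle will be controlling the contraction uniformly in $a$ as $a$ crosses the wall of marginal stability $W$. The two-ray formulation of the jumps was introduced precisely to address this: the wall-crossing identities \eqref{pentid} ensure that the full Stokes factors $S_{\pm}$ vary continuously in $a$ even when the individual BPS rays $\ell_\gamma$ coalesce. Still, the nonlinear expansions of $\log(S_\pm\mathcal{Y}/\mathcal{Y})_\gamma$ from \eqref{newj1}--\eqref{2newj2} involve terms like $\log(1-\mathcal{Y}_e(1-\mathcal{Y}_m))$ whose convergence radius is delicate at the wall; the support property \eqref{support}, together with the exponential decay of $\mathcal{X}^{\text{sf}}_\gamma$ for $R$ large, is what keeps these terms uniformly small. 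The details, left to \cite{rhprob}, amount to choosing the admissible pair $\pm r$ locally in $\alpha$ and verifying that the resulting contraction constant has a uniform bound over $\overline{U_\alpha}$.
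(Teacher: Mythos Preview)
Your proposal is essentially the approach the paper indicates, though the paper itself does not give a proof here: it simply states that Theorem \ref{yfunctions} is proved (in greater generality) in the companion paper \cite{rhprob}, having already announced in \S\ref{results} that the method is ``classical Banach contraction methods and Arzela-Ascoli results on uniform convergence in compact sets.'' Your plan---recast the two-ray jump as an integral equation with the twistor kernel, run Picard iteration seeded at $\mathcal{X}^{\text{sf}}$, use exponential decay along $\pm r$ to get a contraction for large $R$, deduce the reality condition from uniqueness, and bootstrap smoothness by differentiating the fixed-point equation---matches this exactly.

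One point worth flagging against the Remark that follows the theorem in the paper: when you expand $\log(S_\pm\mathcal{Y})_\gamma-\log\mathcal{Y}_\gamma$ as a sum of $\log(1-\mathcal{Y}_{\gamma'})$'s, the resulting power-series coefficients $f^{\gamma'}$ are \emph{not} of the simple form $c_{\gamma'}\gamma'$ that one has when jumps sit on their own BPS rays (the paper gives the explicit Pentagon example $f^{\gamma_{ie+jm}}=(-1)^j\binom{i}{j}i^{-2}\gamma_{ie}$). This means the Cauchy--Schwarz step used in \cite{gaiotto} is unavailable, and one must instead rely on the finiteness of the $\Omega(\gamma';a)$ (tameness) to control the contraction. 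Your sketch implicitly assumes this by working with the finite set $J$, which is correct for the Pentagon, but it is worth making explicit that this is where the argument diverges from the original \cite{gaiotto} estimates.
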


\begin{remark}
Our construction used integrals along a fixed admissible pair $r,-r$ and our Stokes factors are concatenation of the Stokes factors in \cite{gaiotto}. Thus, the coefficients $f^{\gamma'}$ are different here, but they are still obtained by power series expansion of the explicit Stokes factor. In particular, it may not be possible to express
\[ f^{\gamma'} = c_{\gamma'} \gamma' \]
for some constant $c_{\gamma'}$. For instance, in the pentagon, wall-crossing type I, we have, for $0\leq j\leq i$ and $\gamma' = \gamma_{ie +jm}$:
\[ f^{\gamma'} = \frac{(-1)^{j}\binom{i}{j}}{i^2} \gamma_{ie}. \]
Because of this, we didn't use the Cauchy-Schwarz property of the norm in $\Gamma$ in the estimates above as in \cite{gaiotto}. Nevertheless, the tameness condition on the $\Omega(\gamma',a)$ invariants still give us the desired contraction.
\end{remark}

Observe that, since we used admissible rays, the Stokes matrices don't change at the walls of marginal stability and we were able to treat both sides of the wall indistinctly. Thus, the functions $\mathcal{Y}$ in Theorem \ref{yfunctions} are smooth across the wall.

Let's reintroduce the solutions in \cite{gaiotto}. Denote by $\mathcal{X}_e, \mathcal{X}_m$ the solutions to the Riemann-Hilbert problem with jumps of the form $S_\ell^{-1}$ at each BPS ray $\ell$ with the same asymptotics and reality condition as $\mathcal{Y}_e, \mathcal{Y}_m$. In fact, we can see that the functions $\mathcal{Y}$ are the analytic continuation of $\mathcal{X}$ up until the admissible rays $r, -r$. 

In a patch $U_\alpha \subset \mathcal{B}'$ containing the wall of marginal stability, define the admissible ray $r$ as the ray where $\ell_e, \ell_m$ (or $\ell_e, \ell_{-m}$) collide. Since one is the analytic continuation of the other, $\mathcal{X}$ and $\mathcal{Y}$ differ only in a small sector in the $\zeta$-plane bounded by the $\ell_e, \ell_m$ ($\ell_e, \ell_{-m}$) rays, for $a$ not in the wall. As $a$ approaches the wall, such a sector converges to the single admissible ray $r$. Thus, away from the ray where the two BPS rays collide, the solutions $\mathcal{X}$ in \cite{gaiotto} are continuous in $a$.

\section{Extension to the singular fibers}\label{sfiber}

In this paper we will only consider the Pentagon example and in this section we will extend the Darboux coordinates $\mathcal{X}_e, \mathcal{X}_m$ obtained above to the singular locus $D \subset \mathcal{B}$ where one of the charges $Z_\gamma$ approaches zero.

Let $u$ be a coordinate for $\mathcal{B} = \cone$. We can assume that the two bad fibers of $\mathcal{M}$ are at $-2,2$ in the complex $u$-plane. For almost all $\zeta \in \cpone$, the BPS rays converge in a point of the wall of marginal stability away from any bad fiber:

\begin{figure}[htbp]
	\centering
		\includegraphics[width=0.50\textwidth]{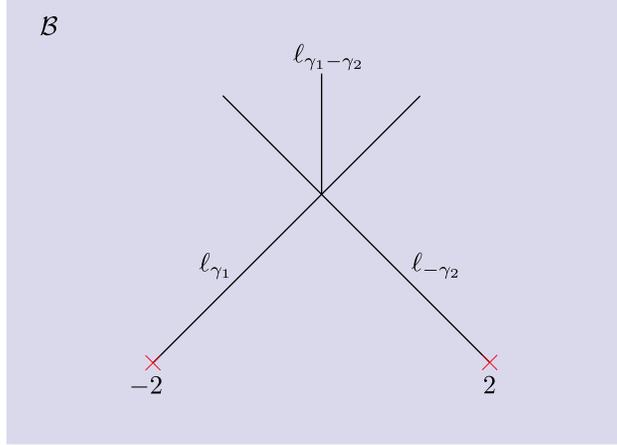}
	\caption{For general $\zeta$, there is only 1 pair of rays at each fiber}
	\label{raysinb}
\end{figure}

It is assumed that $\lim_{u \to 2} Z_{\gamma_1}$ exists and it is nonzero. If we denote this limit by $c = |c|e^{i\phi}$, then for $\zeta$ such that $\arg \zeta \to \phi + \pi$, the ray $\ell_{\gamma_1}$ emerging from -2 approaches the other singular point $u = 2$ (see Figure \ref{oneside}).

\begin{figure}[htbp]
	\centering
		\includegraphics[width=0.60\textwidth]{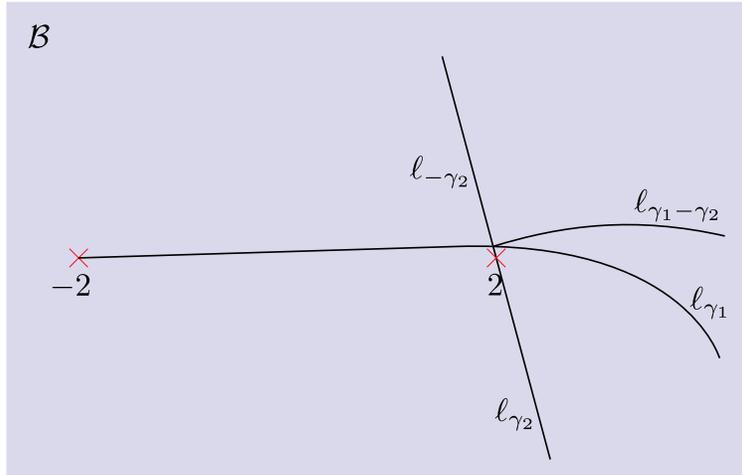}
	\caption{The BPS rays in $\mathcal{B}$ nearly coalesce at the singular locus}
	\label{oneside}
\end{figure}

When $\arg \zeta = \phi + \pi$, the locus $\{ u : Z_{\gamma}(u)/\zeta \in \rone_-\}$, for some $\gamma$ such that $\Omega(\gamma;u) \neq 0$ crosses $u = 2$. See Figure \ref{otherside}.

\begin{figure}[htbp]
	\centering
		\includegraphics[width=0.60\textwidth]{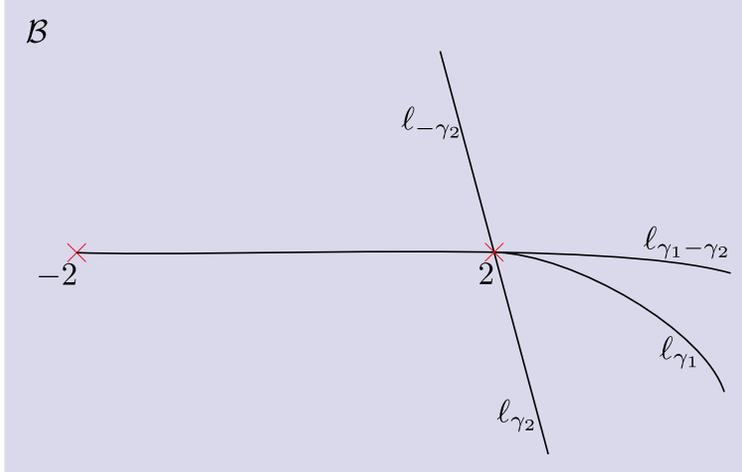}
	\caption{For $\zeta$ in a special ray, the rays intersect $u = 2$}
	\label{otherside}
\end{figure}

As $\zeta$ keeps changing, the rays leave the singular locus, but near $u = 2$, the tags change due to the monodromy of $\gamma_1$ around $u=2$. Despite this change of labels, near $u = 2$ only the rays $\ell_{\gamma_2}, \ell_{-\gamma_2}$ pass through this singular point. See Figure \ref{finalside}

\begin{figure}[htbp]
	\centering
		\includegraphics[width=0.60\textwidth]{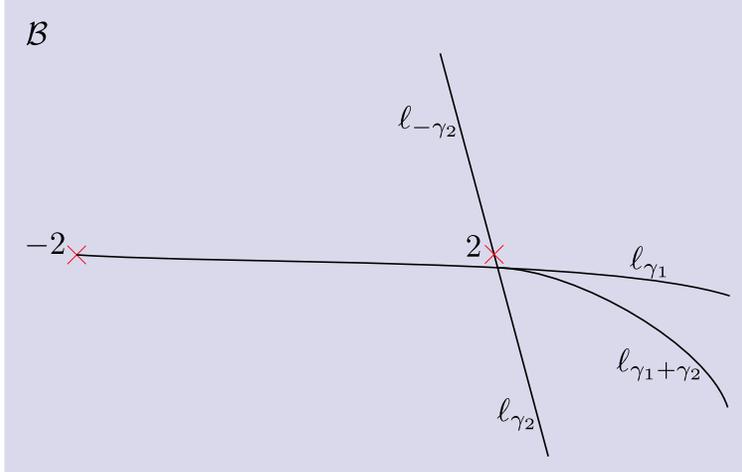}
	\caption{After the critical value of $\zeta$, the rays leave $u = 2$ and their tags change}
	\label{finalside}
\end{figure}

In the general case of Figures \ref{raysinb}, \ref{oneside} or \ref{finalside}, the picture near $u = 2$ is like in the Ooguri-Vafa case, Figure \ref{3reg}.

In any case, because of the specific values of the invariants $\Omega$, it is possible to analytically extend the function $\mathcal{X}_{\gamma_1}$ around $u = 2$. The global jump coming from the rays $\ell_{\gamma_2}, \ell_{-\gamma_2}$ is the opposite of the global monodromy coming from the Picard-Lefschetz monodromy of $\gamma_1 \mapsto \gamma_1 - \gamma_2$ (see \eqref{piclf}). Thus, it is possible to obtain a function $\widetilde{\mathcal{X}}_{\gamma_1}$ analytic on a punctured disk on $\mathcal{B}'$ near $u = 2$ extending $\mathcal{X}_{\gamma_1}$.

From this point on, we use the original formulation of the Riemann-Hilbert problem using BPS rays as in \cite{gaiotto}. We also use $a = Z_{\gamma_2}(u)$ to coordinatize a disk near $u = 2$, and we label $\{\gamma_1, \gamma_2\}$ as $\{\gamma_m, \gamma_e\}$ as in the Ooguri-Vafa case. Recall that, to shorten notation, we write $\ell_e, \mathcal{X}_e$, etc. instead of $\ell_{\gamma_e}, \mathcal{X}_{\gamma_e}$, etc.

By our work in the previous section, solutions $\mathcal{X}_\gamma$ (or, taking logs, $\Upsilon_\gamma$) to the Riemann-Hilbert problem are continuous at the wall of marginal stability for all $\zeta$ except those in the ray $\ell_m = Z_{m}/\zeta \in \rone_- = \ell_e$ (to be expected by the definition of the RH problem). We want to extend our solutions to the bad fiber located at $a=0$. We'll see that to achieve this, it is necessary to introduce new $\theta$ coordinates.


For convenience, we rewrite the integral formulas for the Pentagon in terms of $\Upsilon$ as in \cite{notes}. We will only write the part in $\mathcal{B}_\text{in}$, the $\bout$ part is similar.
\begin{align}
\Upsilon_e(a,\zeta) & = \theta_e -
\frac{1}{4\pi}\left\{ \int_{\ell_m} \frac{d\zeta'}{\zeta'} \frac{\zeta' + \zeta}{\zeta' -\zeta}\log\left[ 1 - \mathcal{X}_m^{\text{sf}}(a,\zeta', \Upsilon_m) \right] - \int_{\ell_{-m}} \frac{d\zeta'}{\zeta'} \frac{\zeta' + \zeta}{\zeta' -\zeta}\log\left[ 1 - \mathcal{X}_{-m}^{\text{sf}}(a,\zeta', \Upsilon_{-m})\right] \right\}, \label{upsefor}\\
\Upsilon_m(a,\zeta) & = \theta_m +
\frac{1}{4\pi}\left\{ \int_{\ell_e} \frac{d\zeta'}{\zeta'} \frac{\zeta' + \zeta}{\zeta' -\zeta}\log\left[ 1 - \mathcal{X}_e^{\text{sf}}(a,\zeta', \Upsilon_e) \right]  - \int_{\ell_{-e}} \frac{d\zeta'}{\zeta'} \frac{\zeta' + \zeta}{\zeta' -\zeta}\log\left[ 1 - \mathcal{X}_{-e}^{\text{sf}}(a,\zeta', \Upsilon_{-e}) \right] \right\} \label{upsmfor}
\end{align}


We can focus only on the integrals above, so write $\Upsilon_\gamma(a,\zeta) = \theta_\gamma + \dfrac{1}{4\pi} \Phi_\gamma(a,\zeta)$, for $\gamma \in \{\gamma_m, \gamma_e\}$. To obtain the right gauge transformation of the torus coordinates $\theta$, we'll split the integrals above into four parts and then we'll show that two of them define the right change of coordinates (in $\bein$, and a similar transformation for $\bout$) that simplify the integrals and allow an extension to the singular fiber.

By Theorem \ref{yfunctions}, both $\Upsilon_m, \Upsilon_e$ satisfy the ``reality condition'', which expresses a symmetry in the behavior of the complexified coordinates $\Upsilon$:
 \begin{equation}\label{corresp}
 \overline{\Upsilon_\gamma(a, \zeta)} = \Upsilon_\gamma \left(a, -1/\overline{\zeta}\right), \qquad a \neq 0
 \end{equation} 
 If we write as $\Upsilon_0$ (resp. $\Upsilon_\infty$) the asymptotic of this function as $\zeta \to 0$ (resp. $\zeta \to \infty$) so that
 \[ \Upsilon_0 = \theta + \frac{1}{4\pi} \Phi_0,  \]
 for a suitable correction $\Phi_0$. A similar equation holds for the asymptotic as $\zeta \to \infty$. By the asymptotic condition \ref{asymptotic}, $\Phi_0$ is imaginary. 

Condition \eqref{corresp} also shows that $\Phi_0 = - \Phi_\infty$. This and the fact that $\Phi_0$ is imaginary give the reality condition
 \begin{equation}\label{reali}
 \Upsilon_0 = \overline{\Upsilon_\infty}
 \end{equation}
 
Split the integrals in \eqref{upsmfor} into four parts as in \eqref{integs}. For example, if we denote by $\zeta_e := -a/|a|$, the intersection of the unit circle with the $\ell_e$ ray, then
\begin{align}
& \int_{\ell_e} \frac{d\zeta'}{\zeta'} \frac{\zeta' + \zeta}{\zeta' -\zeta}\log\left( 1 - \mathcal{X}_e^{\text{sf}}(a,\zeta', \Upsilon_e) \right) = \notag\\
& -\int_0^{\zeta_e} \frac{d\zeta'}{\zeta'} \log\left( 1 - \mathcal{X}_e^{\text{sf}}(a,\zeta', \Upsilon_e) \right) + \int_{\zeta_e}^{\zeta_e \infty} \frac{d\zeta'}{\zeta'} \log\left( 1 - \mathcal{X}_e^{\text{sf}}(a,\zeta', \Upsilon_e) \right) \notag\\
& + \int_0^{\zeta_e} \frac{2 d\zeta'}{\zeta'-\zeta} \log\left( 1 - \mathcal{X}_e^{\text{sf}}(a,\zeta', \Upsilon_e) \right)  + \int_{\zeta_e}^{\zeta_e \infty} 2d\zeta' \left\{\frac{1}{\zeta'-\zeta} -\frac{1}{\zeta'}\right\} \log\left( 1 - \mathcal{X}_e^{\text{sf}}(a,\zeta', \Upsilon_e) \right) \label{4ints}
\end{align}

We consider the first two integrals apart from the rest. If we take the limit $a \to 0$ the exponential decay in $\mathcal{X}_e^{\text{sf}}$:
\[ \exp\left( \frac{\pi R a}{\zeta'} + \pi R \zeta' \overline{a} \right)\]
vanishes and the integrals are no longer convergent. 
 
 By combining the two integrals with their analogues in the $\ell_{-e}$ ray we obtain:
\begin{align}
 & -\int_0^{\zeta_e} \frac{d\zeta'}{\zeta'} \log\left( 1 - \mathcal{X}_e^{\text{sf}}(a,\zeta', \Upsilon_e) \right) + \int_{\zeta_e}^{\zeta_e \infty} \frac{d\zeta'}{\zeta'} \log\left( 1 - \mathcal{X}_e^{\text{sf}}(a,\zeta', \Upsilon_e) \right) \notag\\
 &  \int_0^{-\zeta_e} \frac{d\zeta'}{\zeta'} \log\left( 1 - {\mathcal{X}_e^{\text{sf}}}^{-1}(a,\zeta', -\Upsilon_e) \right) - \int_{-\zeta_e}^{-\zeta_e \infty} \frac{d\zeta'}{\zeta'} \log\left( 1 - {\mathcal{X}_e^{\text{sf}}}^{-1}(a,\zeta', -\Upsilon_e) \right)  \label{gen4int}
 \end{align}
The parametrization in the first pair of integrals is of the form $\zeta' = t\zeta_e$, and in the second pair $\zeta' = -t\zeta_e$. Making the change of variables $\zeta' \mapsto 1/\zeta'$, we can pair up these integrals in a more explicit way as:
\begin{align}
 & -\int_0^1 \frac{dt}{t} \left\{ \log\left[ 1 - \exp\left( -\pi R |a| \left(\frac{1}{t} + t \right) +i\Upsilon_e(a,-te^{i\arg a})  \right) \right] \right. \notag\\
 & \left. + \log\left[ 1 - \exp\left( -\pi R |a| \left(\frac{1}{t} + t \right) - i\Upsilon_e(a,\frac{1}{t} e^{i\arg a})  \right) \right] \right\} \notag \\
 &  + \int_0^1 \frac{dt}{t} \left\{ \log\left[ 1 - \exp\left( -\pi R |a| \left(\frac{1}{t} + t \right) +i\Upsilon_e(a,-\frac{1}{t} e^{i\arg a}) \right) \right] \right. \notag\\
 & \left. + \log\left[ 1 - \exp\left( -\pi R |a| \left(\frac{1}{t} + t \right) -i\Upsilon_e(a,te^{i\arg a}) \right) \right]  \right\} \label{4intreal}
 \end{align}
By \eqref{corresp}, the integrands come in conjugate pairs. Therefore, we can rewrite \eqref{4intreal} as:
 \begin{align*}
 -2\int_0^1 \frac{dt}{t} \text{Re } \left\{\vphantom{\int_0^1}\right. & \log\left[ 1 - \exp\left( -\pi R |a| \left(\frac{1}{t} + t \right) +i\Upsilon_e(a,-te^{i\arg a}) \right) \right] -  \\
 &  \left. \log\left[ 1 - \exp\left( -\pi R |a| \left(\frac{1}{t} + t \right) -i\Upsilon_e(a,te^{i\arg a})  \right) \right] \right\}
 \end{align*}
 \begin{equation}\label{simpl4}
 = -2\int_0^1 \frac{dt}{t} \log \left| \frac{1 - \exp\left( -\pi R |a| \left(t^{-1} + t \right) +i\Upsilon_e(a,-te^{i\arg a})  \right)}{1 - \exp\left( -\pi R |a| \left(t^{-1} + t \right) -i\Upsilon_e(a,te^{i\arg a}) \right)} \right|
 \end{equation}

Observe that \eqref{simpl4} itself suggest the correct transformation of the $\theta$ coordinates that fixes this. Indeed, for a fixed $a \neq 0$ and $\theta_e$, let $Q$ be the map
\begin{equation*}
Q(\theta_m) =  \theta_m + \psi(a,\theta),
\end{equation*}
where
 \begin{align}
 \psi_{\text{in}}(a,\theta)& = \frac{1}{2\pi}\int_0^1 \frac{dt}{t} \log\left| \frac{1 - \exp\left( -\pi R |a| \left(t^{-1} + t \right) +i\Upsilon_e(a,-te^{i\arg a})  \right)}{1 - \exp\left( -\pi R |a| \left(t^{-1} + t \right) -i\Upsilon_e(a,te^{i\arg a})  \right)} \right| \notag\\
 & = \frac{1}{2\pi}\int_0^1 \frac{dt}{t} \log\left| \frac{1 - \left[\mathcal{X}_e\right](-te^{i\arg a})}{1 - \left[\mathcal{X}_{-e}\right](te^{i\arg a})} \right| \label{newmp}
 \end{align}
 for $a \in \bein$. For $a \in \bout$ where the wall-crossing is of type I, let $\varphi = \arg (Z_{\gamma_e + \gamma_m}(a))$, with $\zeta'= -t e^{i\varphi}$ parametrizing the $\ell_{e + m}$ ray:
 \begin{align}
 \psi_{\text{out}}(a,\theta) & = \frac{1}{2\pi}\int_0^1 \frac{dt}{t} \left\{ \log\left| \frac{1 - \exp\left( -\pi R |a| \left(t^{-1} + t \right) +i\Upsilon_e(a,-te^{i\arg a})  \right)}{1 - \exp\left( -\pi R |a| \left(t^{-1} + t \right) -i\Upsilon_e(a,te^{i\arg a})  \right)} \right| \right. \notag\\
 & + \left. \log\left| \frac{1 - \exp\left( -\pi R |Z_{\gamma_e + \gamma_m}| \left(t^{-1} + t \right) +i\Upsilon_{e +m}(a,-te^{i\arg \varphi})  \right)}{1 - \exp\left( -\pi R |Z_{\gamma_e + \gamma_m}| \left(t^{-1} + t \right) -i\Upsilon_{e+m}(a,te^{i\arg \varphi})  \right)} \right|  \right\} \notag\\
 & = \frac{1}{2\pi}\int_0^1 \frac{dt}{t} \left\{ \log\left| \frac{1 - \left[\mathcal{X}_e\right](-te^{i\arg a})}{1 - \left[\mathcal{X}_{-e}\right](te^{i\arg a})} \right| + \log\left| \frac{1 - \left[\mathcal{X}_{e+m}\right](-te^{i\varphi})}{1 - \left[\mathcal{X}_{-e-m}\right](te^{i\varphi})} \right| \right\} \label{outmp}
 \end{align}
 
 Similarly, for wall-crossing of type II, $\varphi = \arg (Z_{\gamma_{-e} + \gamma_m}(a))$, with $\zeta'= -t e^{i\varphi}$ for the $\ell_{-e + m}$ ray:
  \begin{align}
 \psi_{\text{out}}(a,\theta) & = \frac{1}{2\pi}\int_0^1 \frac{dt}{t} \left\{ \log\left| \frac{1 - \exp\left( -\pi R |a| \left(t^{-1} + t \right) +i\Upsilon_e(a,-te^{i\arg a})  \right)}{1 - \exp\left( -\pi R |a| \left(t^{-1} + t \right) -i\Upsilon_e(a,te^{i\arg a})  \right)} \right| \right. \notag\\
 & + \left. \log\left| \frac{1 - \exp\left( -\pi R |Z_{\gamma_{-e} + \gamma_m}| \left(t^{-1} + t \right) +i\Upsilon_{-e +m}(a,-te^{i\arg \varphi})  \right)}{1 - \exp\left( -\pi R |Z_{\gamma_{-e} + \gamma_m}| \left(t^{-1} + t \right) -i\Upsilon_{-e+m}(a,te^{i\arg \varphi})  \right)} \right|  \right\} \notag\\
 & = \frac{1}{2\pi}\int_0^1 \frac{dt}{t} \left\{ \log\left| \frac{1 - \left[\mathcal{X}_e\right](-te^{i\arg a})}{1 - \left[\mathcal{X}_{-e}\right](te^{i\arg a})} \right| + \log\left| \frac{1 - \left[\mathcal{X}_{-e+m}\right](-te^{i\varphi})}{1 - \left[\mathcal{X}_{e-m}\right](te^{i\varphi})} \right| \right\} \label{outmp2}
 \end{align}
 
 As $a$ approaches the wall of marginal stability $W$, $\arg a \to \varphi$. We need to show the following
\begin{lemma}
The two definitions $\psi_{\text{in}}$ and $\psi_{\text{out}}$ coincide at the wall of marginal stability.
\end{lemma}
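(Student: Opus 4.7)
The plan is to combine the structural identity $\psi_{\text{in}} = \psi_{\text{out}}$ with the smoothness of $\Upsilon_m$ across $W$ established in Theorem \ref{yfunctions}, reducing the claim to a single symmetry statement and then invoking the Pentagon identity \eqref{pentid}.

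First, at any point $a \in W$ the phase condition defining the wall forces $\arg Z_e(a) = \arg Z_m(a) = \arg Z_{e+m}(a)$, so $\varphi = \arg a$ and all rays $\ell_{\pm e}, \ell_{\pm(e+m)}$ align in the $\zeta$-plane. The $\mathcal{X}_{\pm e}$ contributions to \eqref{outmp} then coincide identically with \eqref{newmp}, and the difference $\psi_{\text{out}} - \psi_{\text{in}}$ reduces to
\[
\Delta\psi(a,\theta) = \frac{1}{2\pi}\int_0^1 \frac{dt}{t}\log\left| \frac{1 - [\mathcal{X}_{e+m}](-te^{i\varphi})}{1 - [\mathcal{X}_{-e-m}](te^{i\varphi})} \right|.
\]
The claim is that this vanishes.

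Next I would apply the reality condition \eqref{realcond} at $\zeta = te^{i\varphi}$ with $\gamma = e+m$, which yields $\overline{\mathcal{X}_{e+m}(-e^{i\varphi}/t)} = \mathcal{X}_{-e-m}(te^{i\varphi})$; combined with $|1-\bar w| = |1-w|$ this gives $|1 - \mathcal{X}_{-e-m}(te^{i\varphi})| = |1 - \mathcal{X}_{e+m}(-e^{i\varphi}/t)|$. Changing variables $t \mapsto 1/t$ in the denominator term rewrites the difference in the manifestly antisymmetric form
\[
\Delta\psi = \frac{1}{2\pi}\int_0^\infty \frac{dt}{t}\,\mathrm{sgn}(1-t)\,\log|1 - \mathcal{X}_{e+m}(-te^{i\varphi})|,
\]
which vanishes precisely when $\log|1 - \mathcal{X}_{e+m}(-te^{i\varphi})|$ is invariant under $t \mapsto 1/t$ on the coalesced ray.

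The final step is to verify this $t \mapsto 1/t$ symmetry, for which I plan to use the Pentagon identity \eqref{pentid} together with the smoothness of $\Upsilon_m$ at $W$ coming from the KS wall-crossing formula. The semiflat part $|\mathcal{X}_{e+m}^{\text{sf}}(-te^{i\varphi})| = \exp(-\pi R |Z_{e+m}|(t+1/t))$ is manifestly symmetric; the corrections are controlled by the integral representation \eqref{upsmfor}, in which the pairing of $\ell_\gamma$ with $\ell_{-\gamma}$ (together with the reality condition) produces an expression for $\mathrm{Re}\,\Phi_{e+m}$ that is also symmetric in $t \leftrightarrow 1/t$ along the coalesced ray. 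Equivalently, I can deduce it from the twisted character relation $\mathcal{X}_e \mathcal{X}_m = -\mathcal{X}_{e+m}$ valid on either side of the coalesced ray together with the symmetry of $|\mathcal{X}_e|, |\mathcal{X}_m|$ just established for the $\mathcal{X}_{\pm e}, \mathcal{X}_{\pm m}$ cases. The main obstacle will be keeping track of which piecewise-analytic branch of $\mathcal{X}_{e+m}$ one is using when approaching the coalesced ray; I will handle this by approaching from a single fixed side so that the character identity and the reality computation refer to the same branch, at which point the Pentagon identity guarantees the combined scalar identity needed. If this operator-level route proves unwieldy, I will fall back on an abstract argument: by Theorem \ref{yfunctions}, $\Upsilon_m$ itself is continuous at $W$, and both $\psi_{\text{in}}$ and $\psi_{\text{out}}$ are the $(t \mapsto 1/t)$-antisymmetric part of the divergent-as-$a\to 0$ component of $\Phi_m = 4\pi(\Upsilon_m - \theta_m)$, so they must agree there.
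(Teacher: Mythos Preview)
Your first reduction is where the argument fails. You write that ``the $\mathcal{X}_{\pm e}$ contributions to \eqref{outmp} then coincide identically with \eqref{newmp}'' once the rays align at the wall, and then isolate the $\mathcal{X}_{e+m}$ term as the only remaining discrepancy. But the value of $\mathcal{X}_e$ along the coalesced ray is \emph{not} the same when computed as a limit from $\mathcal{B}_{\text{in}}$ versus $\mathcal{B}_{\text{out}}$: on the ``in'' side, $\mathcal{X}_e$ has already passed through the jump at $\ell_m$ before reaching $\ell_e$, so along $\ell_e$ one has $\mathcal{X}_e = \widetilde{\mathcal{X}}_e(1-\mathcal{X}_m)$, whereas on the ``out'' side $\mathcal{X}_e$ is the unjumped $\widetilde{\mathcal{X}}_e$. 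Likewise the $\mathcal{X}_{e+m}$ appearing in $\psi_{\text{out}}$ carries its own jump factor $(1-\mathcal{X}_e)^{-1}$. Your $\Delta\psi$ therefore misses precisely the terms that make the identity work.

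Consequently the residual integral you try to kill is not the true difference, and the $t\mapsto 1/t$ symmetry of $\log|1-\mathcal{X}_{e+m}(-te^{i\varphi})|$ you need does not hold: the correction $\Upsilon_{e+m}(\zeta)$ has no such symmetry along the ray. The paper's proof proceeds instead by tracking the jump factors on both sides and then combining the logarithms into a single ratio; the twisted-character relation $\mathcal{X}_{e+m}=-\mathcal{X}_e\mathcal{X}_m$ gives the scalar identity
\[
(1-\mathcal{X}_e)\bigl(1-\mathcal{X}_{e+m}(1-\mathcal{X}_e)^{-1}\bigr)=1-\mathcal{X}_e(1-\mathcal{X}_m),
\]
which matches the ``out'' combination to the ``in'' one directly. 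Your fallback abstract argument via continuity of $\Upsilon_m$ is also circular here, since the point of the lemma is exactly to verify that the two explicit formulae defining the gauge transformation patch together; continuity of $\Upsilon_m$ alone does not tell you which branch of $\mathcal{X}_e$ enters each formula at the coalesced ray.
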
 
\begin{proof}
  First let $a$ approach $W$ from the ``in'' region, so we're using definition \eqref{newmp}. Start with the pair of functions $(\mathcal{X}_e, \mathcal{X}_m)$ in the $\zeta$-plane and let $\widetilde{\mathcal{X}}_e$ denote the analytic continuation of $\mathcal{X}_e$. See Figure \ref{jumpxe}. When they reach the $\ell_e$ ray, $\mathcal{X}_e$ jumped to $\mathcal{X}_e(1-\mathcal{X}_m)$ by \eqref{kjump} and \eqref{invjmp}. Thus $\mathcal{X}_e = \widetilde{\mathcal{X}}_e(1-\mathcal{X}_m)$ along the $\ell_e$ ray.

\begin{figure}[htbp]
	\centering
		\includegraphics[width=0.45\textwidth]{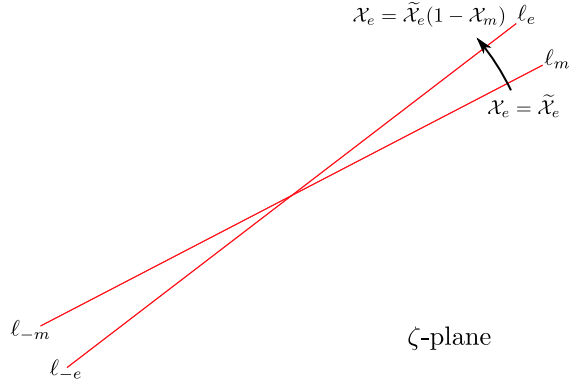}
	\caption{Jump of $\mathcal{X}_e$}
	\label{jumpxe}
\end{figure}

Therefore, 
\begin{equation*}
\psi_{\text{in}}(a,\theta) = \frac{1}{2\pi}\int_0^1 \frac{dt}{t} \log\left| \frac{1 - \left[\mathcal{X}_e(1-\mathcal{X}_m)\right](-te^{i\arg a})}{1 - \left[\mathcal{X}_{-e}(1-\mathcal{X}_m)^{-1}\right](te^{i\arg a})} \right|
\end{equation*}

Now starting from the ``out'' region, and focusing on the wall-crossing of type I for the moment, we start with the pair $(\mathcal{X}_e, \mathcal{X}_m)$ as before. This time, $\mathcal{X}_e$ at the $\ell_e$ ray has not gone to any jump yet. See Figure \ref{jumpxem}. Only $\mathcal{X}_{e+m}$ undergoes a jump at the $\ell_{e+m}$ ray and it is of the form $\mathcal{X}_{e+m} \mapsto \mathcal{X}_{e+m}(1-\mathcal{X}_{e})^{-1}$.

\begin{figure}[htbp]
	\centering
		\includegraphics[width=0.60\textwidth]{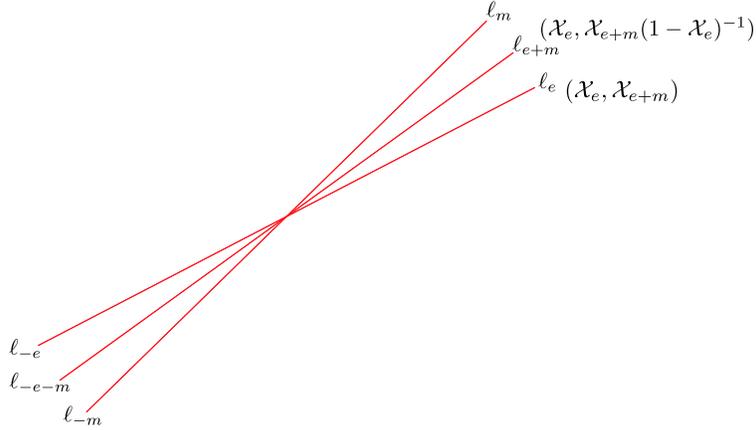}
	\caption{Only $\mathcal{X}_{e+m}$ has a jump}
	\label{jumpxem}
\end{figure}

When $a$ hits the wall $W$, $\varphi = \arg a$ and the integrals are taken over the same ray. Thus, we can combine the logs and obtain:

 \begin{align}
 \psi_{\text{out}}(a,\theta) &  = \frac{1}{2\pi}\int_0^1 \frac{dt}{t} \left\{ \log\left| \frac{1 - \left[\mathcal{X}_e\right](-te^{i\arg a})}{1 - \left[\mathcal{X}_{-e}\right](te^{i\arg a})} \right| + \log\left| \frac{1 - \left[\mathcal{X}_{e+m}(1-\mathcal{X}_{e})^{-1}\right](-te^{i\arg a})}{1 - \left[\mathcal{X}_{-e-m}(1-\mathcal{X}_{e})\right](te^{i\arg a})} \right| \right\} \notag\\
 & = \frac{1}{2\pi}\int_0^1 \frac{dt}{t} \log\left| \frac{1 - \left[\mathcal{X}_e(1-\mathcal{X}_m)\right](-te^{i\arg a})}{1 - \left[\mathcal{X}_{-e}(1-\mathcal{X}_m)^{-1}\right](te^{i\arg a})} \right|
 \end{align}
 
\noindent and the two definitions coincide. For the wall-crossing of type II the proof is entirely analogous.

 

\end{proof}
 
 \begin{theorem}\label{homeps}
 $Q$ is a reparametrization in $\theta_m$; that is, a diffeomorphism of $\mathbb{R}/2\pi \mathbb{Z}$.
 \end{theorem}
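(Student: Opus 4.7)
The plan is to verify three properties of $Q$: (i) $\psi(a,\theta)$ is $2\pi$-periodic in $\theta_m$, so $Q$ descends to a map $\mathbb{R}/2\pi\mathbb{Z} \to \mathbb{R}/2\pi\mathbb{Z}$; (ii) $Q$ is smooth; (iii) $\partial Q/\partial \theta_m > 0$ pointwise for $R$ large enough. Together these imply $Q$ is an orientation-preserving diffeomorphism.

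Periodicity is immediate: the $\theta_m$-dependence of $\psi$ enters only through $\Upsilon_e$, i.e.\ through $\mathcal{X}_{\pm e}$, and the solutions of the Riemann--Hilbert problem are single-valued on the torus fiber $\mathcal{M}'_a$, hence $2\pi$-periodic in $\theta_m$. Smoothness follows from Theorem \ref{smooth} combined with the fact that the integrands of \eqref{newmp}--\eqref{outmp2} decay super-exponentially at $t=0$ (from the factor $e^{-\pi R|a|/t}$) and are bounded at $t=1$, so differentiation under the integral sign is legitimate.

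The heart of the argument is the monotonicity bound. Differentiating \eqref{newmp} and applying the chain rule writes $\partial \psi_{\text{in}}/\partial \theta_m$ as the real part of an integral whose integrand contains the factors $\mathcal{X}_{\pm e}/(1-\mathcal{X}_{\pm e})$ times $\partial \Upsilon_e/\partial \theta_m$; the task therefore reduces to controlling $\partial \Upsilon_e/\partial\theta_m$. Differentiating the integral equation \eqref{upsefor} with respect to $\theta_m$ produces a linear fixed-point equation for $\partial\Upsilon_e/\partial\theta_m$ whose driving terms are integrals of $\mathcal{X}_m^{\text{sf}}/(1-\mathcal{X}_m^{\text{sf}})$ along $\ell_{\pm m}$. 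By \eqref{zmnotz}, $|Z_m(a)|$ is bounded below by a positive constant on a punctured disk around $a=0$, so $\mathcal{X}_m^{\text{sf}}$ enjoys uniform exponential decay of order $\exp(-\pi R|Z_m|(|\zeta'| + |\zeta'|^{-1}))$ along $\ell_{\pm m}$. The same Banach contraction argument underpinning Theorem \ref{smooth} then yields $\|\partial \Upsilon_e/\partial\theta_m\|_{\infty} = O(e^{-\alpha R})$ with constants uniform in $a$. Substituting back gives $|\partial \psi/\partial \theta_m| = O(e^{-\alpha R})$, so for $R$ large enough $\partial Q/\partial \theta_m = 1 + \partial \psi/\partial \theta_m > 0$, making $Q$ an orientation-preserving diffeomorphism. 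The argument for $\psi_{\text{out}}$ in \eqref{outmp}--\eqref{outmp2} is identical once one notes that $Z_{\pm\gamma_e \pm \gamma_m}$ is likewise bounded away from zero near $a = 0$.

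The main obstacle I anticipate is uniformity of the bound in $\theta_e$, particularly as $\theta_e \to 0 \pmod{2\pi}$, where the factors $1/(1-\mathcal{X}_{\pm e})$ can become singular on the degenerate fiber. The structural feature that should save the argument is the symmetry used in deriving \eqref{simpl4}: by the reality condition \eqref{corresp}, the numerator and denominator of the ratio in \eqref{newmp} are related by complex conjugation together with the involution $t \mapsto 1/t$, so the singularities of $\log|1-\mathcal{X}_e|$ and $\log|1-\mathcal{X}_{-e}|$ are tied together and cancel inside the integrated expression. A careful splitting of the integration range near $t = 1$ exploiting this symmetry, combined with the analogous reality of $\partial \Upsilon_e/\partial \theta_m$, should give a bound independent of $\theta_e$ and complete the proof.
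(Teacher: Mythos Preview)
Your proposal is correct and follows essentially the same route as the paper: show $|\partial\psi/\partial\theta_m|<1$ so that $\partial Q/\partial\theta_m>0$, and use the $2\pi$-periodicity of $\psi$ in $\theta_m$ to conclude $Q$ is a diffeomorphism of the circle. The only noteworthy difference is where you locate the smallness: you invest effort in proving $\partial\Upsilon_e/\partial\theta_m=O(e^{-\alpha R})$, whereas the paper is content with the cruder bound $|\partial\Upsilon_e/\partial\theta_m|<1$ (quoted from \cite{rhprob}) and instead extracts the smallness directly from the exponential factor $C=\exp\bigl(-\pi R|a|(t+t^{-1})\bigr)$ sitting in the integrand of \eqref{newmp}; since $\int_0^1\frac{dt}{t}\,C$ is finite and goes to zero as $R\to\infty$ for fixed $a\neq0$, this already forces $|\partial\psi/\partial\theta_m|<1$ for $R$ large. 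Your sharper estimate on $\partial\Upsilon_e/\partial\theta_m$ is true but not needed here, and your final worry about uniformity as $\theta_e\to0$ is unnecessary for the theorem as stated: at fixed $a\neq0$ the factor $C$ keeps $|\mathcal{X}_{\pm e}|$ strictly below $1$ along the ray (indeed $|e^{i\Upsilon_e}|\le 2$ by \cite[Lemma~3.2]{rhprob}), so $|1-\mathcal{X}_{\pm e}|$ is uniformly bounded below regardless of $\theta_e$.
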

 \begin{proof}
 To show that $Q$ is injective, it suffices to show that $\left|\frac{\partial \psi}{\partial \theta_m}\right| < 1$. We will show this in the $\bein$ region. The proof for the $\bout$ region is similar.
 
 To simplify the calculations, write
  \begin{equation}\label{psii}
  \psi(a,\theta) = 2\int_0^1 \frac{dt}{t} \log\left| \frac{1-Cf(\theta_m)}{1-Cg(\theta_m)}
     \right|
   \end{equation}
  for functions $f, g$ of the form $e^{i\Upsilon_\gamma}$ for different choices of $\gamma$ (they both depend on other parameters, but they're fixed here) and a factor $C$ of the form
  \[ C = \exp\left( -\pi R |a| (t^{-1} + t)\right) \]
  Now take partials in both sides of \eqref{psii} and bring the derivative inside the integral. After an application of the chain rule we get the estimate
  \[ \left| \frac{\partial \psi}{\partial \theta_m} \right| \leq 2\int_0^1 \frac{dt}{t} |C| \left\{ \frac{|f||\frac{\partial \Upsilon_e(t)}{\partial \theta_m}|}{|1-Cf|} + \frac{|g||\frac{\partial \Theta_e(-t)}{\partial \theta_m}|}{|1-Cg|} \right\} \] 
 By the estimates in \cite[\S 3.2]{rhprob}, $\left|\frac{\partial \Upsilon_e}{\partial \theta_m}\right| < 1$. In \cite[Lemma 3.2]{rhprob}, we show that $|f|, |g|$ can be bounded by 2. The part $C$ has exponential decay so if $R$ is big enough we can bound the above by 1 and injectivity is proved. For surjectivity, just observe that $\psi(\theta_m + 2\pi) = \psi(\theta_m)$, so $Q(\theta_m + 2\pi) = \theta_m + 2\pi$. 
 \end{proof}
 
With respect to the new coordinate $\theta'_m$, the functions $\Upsilon_e, \Upsilon_m$ satisfy the equation:

\begin{align}
\Upsilon_e(a,\zeta)  = \theta_e +
\frac{1}{4\pi}\sum_{\gamma'} \Omega(\gamma';a) \left\langle \gamma_e, \gamma' \right\rangle   & \int_{\gamma'} \frac{d\zeta'}{\zeta'} \frac{\zeta' + \zeta}{\zeta' -\zeta} \log\left[ 1 - \mathcal{X}_{\gamma'}^{\text{sf}}(a,\zeta', \Upsilon_{\gamma'}) \right]  \label{inteq1} \\
\Upsilon_m(a,\zeta)  = \theta'_m +
\frac{1}{2\pi}\sum_{\gamma'} \Omega(\gamma';a) \left\langle \gamma_m, \gamma' \right\rangle  \left\{ \vphantom{\int_0^b} \right.
& \int_{0}^{b'} \frac{d\zeta'}{\zeta' - \zeta} \log\left[ 1 - \mathcal{X}_{\gamma'}^{\text{sf}}(a,\zeta', \Upsilon_{\gamma'}) \right] + \notag \\
& \left.  \int_{b'}^{b' \infty} \frac{\zeta d\zeta'}{\zeta'(\zeta' - \zeta)} \log\left[ 1 - \mathcal{X}_{\gamma'}^{\text{sf}}(a,\zeta', \Upsilon_{\gamma'}) \right] \right\} \label{inteq2} ,
\end{align}
for $b'$ the intersection of the unit circle with the $\ell_{\gamma'}$ ray. The $\Omega(\gamma';a)$ jump at the wall, but in the Pentagon case, the sum is finite.

In order to show that $\Upsilon$ converges to some function, even at $a = 0$, observe that the integral equations in \eqref{inteq1} and \eqref{inteq2} still make sense at the singular fiber, since in the case of \eqref{inteq1}, $\lim_{a \to 0} Z_m = c \neq 0$ and the exponential decay is still present, making the integrals convergent. In the case of \eqref{inteq2}, the exponential decay is gone, but the different kernel makes the integral convergent, at least for $\zeta \in \cone^\times$. The limit function $\lim_{a \to 0} \Upsilon$ should be then a solution to the integral equations obtained by recursive iteration, as in \cite[\S 3]{rhprob}.

We have to be specially careful with the Cauchy integral in \eqref{inteq2}. It will be better to obtain each iteration $\Upsilon^{(\nu)}_m$ when $|a| \to 0$ by combining the pair of rays $\ell_{\gamma'}, \ell_{-\gamma'}$ into a single line $L_{\gamma'}$, where in the case of the Pentagon, $\gamma'$ can be either $\gamma_e$ or $\gamma_{e+m}$, depending on the side of the wall we're at. We formulate a boundary problem over each infinite curve $L_{\gamma'}$ as in \S \ref{altrh}. As in the Ooguri-Vafa case, the jump function\footnote{Since we do iterations of boundary problems, we abuse notation and use simply $G(\zeta)$ where it should be $G^{(\nu)}(\zeta)$. This shouldn't cause any confusion, as our main focus in this section is how to obtain \emph{any} iteration of $\mathcal{X}_m$} $G(\zeta)$ has discontinuities of the first kind at 0 and $\infty$, but we also have a new difficulty: For $\theta_e$ close to 0, the jump function $G(\zeta) = 1-e^{i\Upsilon^{(\nu - 1)}_{\gamma'}}(\zeta)$ may be 0 for some values of $\zeta$.

Since the asymptotics of $\Upsilon^{(\nu)}_{e}$ as $\zeta \to 0$ or $\zeta \to \infty$ are $\theta_e \pm i\phi_e \neq 0$, the jump function $G(\zeta)$ can only attain the 0 value inside a compact interval away from 0 or $\infty$, hence these points are isolated in $L_{\gamma'}$. By the symmetry relation expressed in Lemma \ref{corresp}, the zeroes of $G(\zeta)$ come in pairs in $L_{\gamma'}$ and are of the form $\zeta_k, -1/\overline{\zeta_k}$. By our choice of orientation for $L_{\gamma'}$, one of the jumps is inverted so that $G(\zeta)$ has only zeroes along $L_{\gamma'}$ and no poles.

Thus, as in \S \ref{altrh}, we have a Riemann-Hilbert problem of the form\footnote{To simplify notation, we omit the iteration index $\nu$ in the Riemann-Hilbert problem expressed. By definition, $\mathcal{X}_m = \mathcal{X}^{\text{sf}}_m X_m$, for \emph{any} iteration $\nu$}
\begin{equation}\label{rhpent}
X_m^+(\zeta) = G(\zeta) X_m^-(\zeta)
\end{equation}

In \cite[Lemma 4.2]{rhprob}, we show that the solutions of \eqref{rhpent} exist and are unique, given our choice of kernel in \eqref{inteq2}. We thus obtain each iteration $\Upsilon_m^{(\nu)}$ of \eqref{inteq2}. Moreover, since by \cite{rhprob}, $\mathcal{X}_m^+ = 0$ at points $\zeta$ in the $L_e$ ray where $G(\zeta) = 0$, $\Upsilon_m^{(\nu)+}$ has a logarithmic singularity at such points.


\subsection{Estimates and a new gauge transformation}

As we've seen in the Ooguri-Vafa case, we expect our solutions $\lim_{a \to 0} \Upsilon$ to be unbounded in the $\zeta$ variable. 
Define a Banach space $\mathrsfs{X}$ as the completion under the sup norm of the space of functions $\Phi: \cone^\times \times \mathbb{T} \times U \to \cone^{2n}$  that are piecewise holomorphic on $\cone^\times$, smooth on $\mathbb{T} \times U$, for $U$ an open subset of $\mathcal{B}$ containing $0$ and such that \eqref{inteq1}, \eqref{inteq2} hold. 

Like in the Ooguri-Vafa case, let $a \to 0$ fixing $\arg a$. We will later get rid of this dependence on $\arg a$ with another gauge transformation of $\theta_m$. The following estimates on $\Upsilon^{(\nu)}$ will clearly give us that the sequence converges to some limit $\Upsilon^{(\nu)}$.

\begin{lemma}\label{estbadfib}
In the Pentagon case, at the bad fiber $a = 0$:
\begin{align}
\Upsilon_e^{(\nu + 1)} & = \Upsilon_e^{(\nu)} + O\left( e^{-2\pi \nu R |Z_m|} \right), \hspace{5 mm} \nu \geq 2 \label{elstm}\\
\Upsilon_m^{(\nu + 1)} & = \Upsilon_m^{(\nu)} + O\left( e^{-2\pi \nu R |Z_m|} \right), \hspace{5 mm} \nu \geq 1 \label{mgstm}
\end{align}
\end{lemma}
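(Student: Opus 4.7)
The plan is a joint induction on $\nu$, using the recursion \eqref{recurs} together with a sharp exponential bound on the semiflat coordinate along the magnetic BPS ray. Parametrizing $\ell_m$ at $a = 0$ by $\zeta' = -Z_m/s$ for $s \in (0,\infty)$, one has
\begin{equation*}
|\mathcal{X}_m^{\text{sf}}(\zeta')| = \exp\left(-\pi R\left(s + \frac{|Z_m|^2}{s}\right)\right) \leq e^{-2\pi R|Z_m|},
\end{equation*}
the extremum being attained at $s = |Z_m|$ by the elementary inequality $s + |Z_m|^2/s \geq 2|Z_m|$; an analogous bound holds on $\ell_{-m}$. Steepest descent at $s = |Z_m|$ shows that integrating this bound against the Cauchy kernel in \eqref{inteq1} preserves the $e^{-2\pi R|Z_m|}$ rate up to polynomial prefactors absorbable into the $O(\cdot)$. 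This is the unit of exponential smallness gained at each $\Upsilon_e$-iteration.

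For the base cases, I would first bound $\Upsilon_e^{(1)} - \Upsilon_e^{(0)}$ directly: since $|\log(1 - \mathcal{X}_m^{\text{sf}})| = O(|\mathcal{X}_m^{\text{sf}}|)$ on $\ell_m$, the integral in \eqref{inteq1} is $O(e^{-2\pi R|Z_m|})$. For the $\nu = 1$ case of \eqref{mgstm} I would apply the mean value theorem to $\log(1 - \mathcal{X}_e^{(1)}) - \log(1 - \mathcal{X}_e^{(0)})$ inside \eqref{inteq2}. At $a = 0$ one has $\mathcal{X}_e = e^{i\Upsilon_e}$, so the MVT prefactor $\mathcal{X}_e^*/(1 - \mathcal{X}_e^*)$ is $O(1)$ (assuming $\theta_e$ bounded away from multiples of $2\pi$); the smallness comes entirely from the factor $\Upsilon_e^{(1)} - \theta_e = O(e^{-2\pi R|Z_m|})$ just established.

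For the inductive step proving \eqref{elstm}, expand
\begin{equation*}
\log(1 - \mathcal{X}_m^{(\nu)}) - \log(1 - \mathcal{X}_m^{(\nu-1)}) = \frac{-i\mathcal{X}_m^*}{1 - \mathcal{X}_m^*}\bigl(\Upsilon_m^{(\nu)} - \Upsilon_m^{(\nu-1)}\bigr) + O(|\mathcal{X}_m^*|^2)
\end{equation*}
by mean value theorem. On $\ell_m$, $|\mathcal{X}_m^*| \leq C e^{-2\pi R|Z_m|}$ (the twist $e^{i(\Upsilon_m - \theta_m)}$ contributes only an $O(1)$ multiplier, uniform in $\nu$). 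Combined with the inductive hypothesis for \eqref{mgstm} at step $\nu - 1$, integration yields
\begin{equation*}
\Upsilon_e^{(\nu+1)} - \Upsilon_e^{(\nu)} = O\bigl(e^{-2\pi R|Z_m|}\bigr)\cdot O\bigl(e^{-2\pi(\nu-1)R|Z_m|}\bigr) = O\bigl(e^{-2\pi\nu R|Z_m|}\bigr).
\end{equation*}
The inductive step for \eqref{mgstm} at $\nu \geq 2$ proceeds analogously, with the decay inherited from the $\Upsilon_e$-estimate \eqref{elstm} via the MVT on the $\mathcal{X}_e$-side of \eqref{inteq2}.

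The hard part is the $\Upsilon_m$ estimate, since at $a = 0$ there is no explicit $e^{-2\pi R|Z_m|}$ factor from $\mathcal{X}_e^{\text{sf}}$ in the integrand (as $Z_e = 0$), so the decay must propagate entirely from the $\Upsilon_e$-side. The subtle point is to verify that the bounds survive uniformly in $\zeta$ on a suitable Banach space, despite the logarithmic singularities of $\Upsilon_m^{(\nu)}$ at $\zeta = 0, \infty$ and the possible zeros of the jump function $G(\zeta)$ along the integration contour (where $\mathcal{X}_m^+$ vanishes, as noted preceding the lemma). The reformulation of \S\ref{altrh} as a homogeneous Riemann-Hilbert problem, together with the gauge-transformed equation \eqref{inteq2}, is precisely what renders such uniform estimates tractable.
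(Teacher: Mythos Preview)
Your approach is essentially the same as the paper's: joint induction on $\nu$, with one unit of $e^{-2\pi R|Z_m|}$ gained each time the recursion passes through the $\ell_{\pm m}$ integrals, and the smallness then propagated to the $\Upsilon_m$-side via a first-order (mean-value) expansion of $\log(1-\mathcal{X}_e)$. The paper packages the $\ell_m$-integral estimate as an explicit saddle-point formula (citing a companion lemma) that evaluates $\Upsilon_m^{(\nu)}$ only at the saddle $\zeta_m = -Z_m/|Z_m|$, whereas you phrase it as a uniform bound $|\mathcal{X}_m^{\text{sf}}| \le e^{-2\pi R|Z_m|}$ plus MVT; but the mechanism and the arithmetic of the exponents are identical.

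One small point worth tightening: your claim that the twist $e^{i(\Upsilon_m^{(\nu)}-\theta_m)}$ is $O(1)$ uniformly along all of $\ell_m$ needs the observation that the logarithmic blow-up of $\Upsilon_m^{(\nu)}$ near $\zeta'=0,\infty$ is dominated by the linear-in-$1/\zeta'$ (resp.\ $\zeta'$) exponential decay of $\mathcal{X}_m^{\text{sf}}$. The paper sidesteps this by localizing to the saddle point, where $\Upsilon_m^{(\nu)}(\zeta_m)$ is manifestly bounded; your last paragraph correctly flags this as the delicate step.
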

\begin{proof}
As before, we prove this by induction. Note that $\Upsilon^{(1)}_m = \Upsilon^{\text{OV}}$, the extension of the Ooguri-Vafa case obtained in \eqref{xmnice}, and $\Upsilon^{(1)}_m$ differs considerably from $\theta_m$ because of the $\log \zeta$ term. Hence the estimates cannot start at $\nu = 0$. Because of this reason, $\Upsilon^{(2)}_e$ differs considerably from $\Upsilon^{(1)}_e$ since this is the first iteration where $\Upsilon^{(1)}_m$ is considered.

Let $\nu = 1$. The integral equations for $\Upsilon_e$ didn't change in this special case. By Lemma 3.3 in \cite{rhprob}, we have for the general case:

\begin{equation}\label{frappx}
\Upsilon^{(1)}_e = \theta_e + \sum_{\gamma'} \Omega(\gamma',a) \left\langle \gamma_e, \gamma' \right\rangle \frac{e^{-2\pi R |Z_{\gamma'}|}}{4\pi i \sqrt{R |Z_{\gamma'}|}}\frac{\zeta_{\gamma'} + \zeta}{\zeta_{\gamma'} - \zeta} e^{i\theta_{\gamma'}} + O\left( \frac{e^{-2\pi R |Z_{\gamma'}|}}{R}\right)
\end{equation}
where $\zeta_{\gamma'} = -\frac{Z_{\gamma'}}{|Z_{\gamma'}|}$ is the saddle point for the integrals in \eqref{inteq1}, and $\zeta$ is not $\zeta_{\gamma'}$. Note that there is no divergence if $\zeta \to 0$ or $\zeta \to \infty$. If $\zeta = \zeta_{\gamma'}$, again by Lemma 3.3 in \cite{rhprob}, we obtain estimates as in \eqref{frappx} except for the $\sqrt{R}$ terms in the denominator.

In any case, for the Pentagon, the $\gamma'$ in \eqref{frappx} are only $ \gamma_{\pm m},  \gamma_{\pm (e+m)}$, depending on the side of the wall of marginal stability. At $a = 0$, $Z_{e+m} = Z_m$, so \eqref{frappx}  gives that $\log[1 - e^{i \Upsilon^{(1)}_e}] = \log[1 - e^{i\theta_e}] + O(e^{-2\pi R |Z_m|})$ along the $\ell_e$ ray, and a similar estimate holds for $\log[1 - e^{-i \Upsilon^{(1)}_e}]$ along the $\ell_{-e}$ ray. Plugging in this in \eqref{inteq2}, we get \eqref{mgstm} for $\nu = 1$.

For general $\nu$, a saddle point analysis on $\Upsilon^{(\nu)}_e$ can still be performed and obtain as in \eqref{frappx}:
\begin{equation}\label{appgnu}
\Upsilon^{(\nu+1)}_e = \theta_e + \frac{e^{-2\pi R |Z_m|}}{4\pi i \sqrt{R |Z_m|}} \left\{ \frac{\zeta_m + \zeta}{\zeta_m - \zeta} e^{i\Upsilon^{(\nu)}_m(\zeta_m)} -  \frac{\zeta_m - \zeta}{\zeta_m + \zeta} e^{-i\Upsilon^{(\nu)}_m(-\zeta_m)} \right\} + O\left( \frac{e^{-2\pi R |Z_{\gamma'}|}}{R}\right),
\end{equation}
from one side of the wall. On the other side (for type I) it will contain the extra terms
\begin{equation}
\frac{e^{-2\pi R |Z_m|}}{4\pi i \sqrt{R |Z_m|}} \left\{ \frac{\zeta_m + \zeta}{\zeta_m - \zeta} e^{i(\Upsilon^{(\nu)}_m(\zeta_m) + \Upsilon^{(\nu)}_e(\zeta_m))} -  \frac{\zeta_m - \zeta}{\zeta_m + \zeta} e^{-i(\Upsilon^{(\nu)}_m(-\zeta_m) - \Upsilon^{(\nu)}_e(-\zeta_m))} \right\}.
\end{equation}
\noindent Observe that for this approximation we only need $\Upsilon^{(\nu)}$ at the point $\zeta_m$. By the previous part, for $\nu = 2$,
\[  e^{i\Upsilon^{(2)}_m(\zeta_m)} = e^{i\Upsilon^{(1)}_m(\zeta_m)} \left(1 + O\left( e^{-2\pi R |Z_m|}\right) \right) \]
Thus, for $\nu = 2$,
\begin{align}
\Upsilon^{(3)}_e & = \theta_e + \frac{e^{-2\pi R |Z_m|}}{4\pi i \sqrt{R |Z_m|}} \left\{ \frac{\zeta_m + \zeta}{\zeta_m - \zeta} e^{i\Upsilon^{(1)}_m(\zeta_m)} \left(1 + O\left( e^{-2\pi R |Z_m|}\right) \right) \right. \notag\\
& -  \left. \frac{\zeta_m - \zeta}{\zeta_m + \zeta} e^{-i\Upsilon^{(1)}_m(-\zeta_m)} \left(1 + O\left( e^{-2\pi R |Z_m|}\right) \right) \right\} + O\left( R^{1/2}\right)\notag\\
& = \Upsilon^{(2)}_e + O\left( e^{-4\pi R|Z_m|}\right) \label{appgnu2}
\end{align}
and similarly in the other side of the wall. For general $\nu$, the same arguments show that \eqref{elstm}, \eqref{mgstm} hold after the appropriate $\nu$.
\end{proof}


There is still one problem: the limit of $\widetilde{\mathcal{X}}_m$ we obtained as $a \to 0$ for the analytic continuation of $\mathcal{X}_m$ was only along a fixed ray $\arg a = $ constant. To get rid of this dependence, it is necessary to perform another gauge transformation on the torus coordinates $\theta$. Recall that we are restricted to the Pentagon case. Let $a \to 0$ fixing $\arg a$. Let $\zeta_{\gamma}$ denote $Z_{\gamma}/|Z_{\gamma}|$. In particular, $\zeta_e = a/|a|$ and this remains constant since we're fixing $\arg a$. Also, $\zeta_m = Z_m/|Z_m|$ and this is independent of $\arg a$ since $Z_m$ has a limit as $a \to 0$. The following lemma will allow us to obtain the correct gauge transformation.
 
\begin{lemma}
For the limit $\left. \widetilde{\mathcal{X}}_m\right|_{a=0}$ obtained above, its imaginary part is independent of the chosen ray $\arg a = c$ along which $a \to 0$.
\end{lemma}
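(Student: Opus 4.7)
The plan is to decompose $\Upsilon_m|_{a=0}$ into a gauge-absorbed part and a remainder whose imaginary part is manifestly $\arg a$-independent. Since $\theta'_m$ and $\theta_e$ are both real, any $\arg a$-dependence of $\text{Im}\,\Upsilon_m|_{a=0}$ must come from the integrals in \eqref{inteq2}, so I need to show that the imaginary parts of those integrals depend only on $\theta_e$, $\zeta$, and the nonzero limit $c = \lim_{a \to 0} Z_m$.

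First I would decompose each pair of integrals over $\ell_\gamma, \ell_{-\gamma}$ using the four-piece splitting \eqref{4ints} employed in the Ooguri-Vafa case. Two of the four pieces (those with kernel $1/\zeta'$) combine, after pairing $\ell_\gamma$ with $\ell_{-\gamma}$ by the substitution $\zeta' \mapsto 1/\zeta'$, to produce precisely the real-valued expression \eqref{simpl4} that defines $\psi$ in \eqref{newmp}, \eqref{outmp}, and \eqref{outmp2}. By construction the shift $\theta_m \mapsto \theta'_m = \theta_m + \psi$ absorbs this contribution exactly, so these pieces drop out once $\Upsilon_m$ is written in the new coordinates. The surviving pieces carry the Cauchy-type kernels $2/(\zeta'-\zeta)$ on $(0,\zeta_e)$ and $2/(\zeta'-\zeta) - 2/\zeta'$ on $(\zeta_e, \zeta_e\infty)$.

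Second, I would pair the surviving $\ell_\gamma$- and $\ell_{-\gamma}$-contributions using the reality condition $\overline{\Upsilon_\gamma(a, \zeta')} = \Upsilon_\gamma(a, -1/\bar{\zeta'})$ from \eqref{corresp} together with $\mathcal{X}_{-\gamma} = \mathcal{X}_\gamma^{-1}$ from \eqref{xprop}. Parametrizing $\zeta' = \pm t e^{i\arg a}$ on the two rays and applying $t \mapsto 1/t$ to the infinite piece, the sum of the surviving integrals should reorganize into a single integral in which $\arg a$ enters only through the combination $\arg(\zeta/a)$. The imaginary part of this integral then becomes a real integral whose integrand is, by the reality condition, the real part of an expression that changes under a rotation $\arg a \mapsto \arg a + \delta$ only by terms contributing to $\text{Re}\,\Upsilon_m$. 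For the Pentagon-specific pair $\ell_{e+m}, \ell_{-e-m}$ in $\bout$, the same argument applies verbatim, noting that $\arg Z_{e+m}(a) \to \arg c$ as $a \to 0$ along any ray since $Z_{e+m} = Z_m + a \to c \neq 0$.

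The main obstacle will be the careful bookkeeping in the second step: making sure that the reality condition really does force the $\arg a$-dependent part of the surviving integrals into the real part of $\Upsilon_m$ rather than the imaginary part, and that no subtle branch-cut issue (of the sort that required care in \eqref{logfusion}--\eqref{breaklog}) creeps in as $|a| \to 0$. I expect this to be routine once the symmetry $(\gamma, \zeta) \leftrightarrow (-\gamma, -1/\bar\zeta)$ of the data is made explicit, but it does require tracking how the contour orientations and the principal branches of the logarithm behave under the rotation $\arg a \mapsto \arg a + \delta$, especially across the wall of marginal stability where the $\ell_{e+m}$ and $\ell_{-e-m}$ rays enter or exit the picture.
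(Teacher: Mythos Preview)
Your decomposition via the four-piece splitting and the absorption of the $1/\zeta'$ pieces by the first gauge transformation $\theta_m \mapsto \theta'_m$ is fine and matches the paper. The gap is in your second step. You assert that after pairing the surviving Cauchy-kernel integrals over $\ell_{\pm e}$ via the reality condition, ``$\arg a$ enters only through the combination $\arg(\zeta/a)$'' and that a rotation in $\arg a$ therefore only affects $\text{Re}\,\Upsilon_m$. Neither claim is justified: the kernels $1/(\zeta'-\zeta)$ and $\zeta/(\zeta'(\zeta'-\zeta))$ do not transform cleanly under $\zeta' \mapsto -1/\bar\zeta'$, so the pairing does not produce an expression in which the $\arg a$-dependence is visibly confined to the real part. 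You acknowledge this as ``the main obstacle'' but offer no mechanism to overcome it.

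The paper's argument bypasses this difficulty entirely by comparing two values $\arg a = \rho_0$ and $\arg a = \rho$ directly. Since $\Upsilon_e$ at $a=0$ is independent of $\arg a$, the integrands along $\ell_{\pm e}$ are holomorphic in $\zeta'$ in the sector between the two ray positions (for $\zeta$ outside that sector). Cauchy's theorem then collapses the difference $\Upsilon_m|_{\rho} - \Upsilon_m|_{\rho_0}$ to integrals over short arcs $C_{\pm e}$ joining the two unit-circle points $\zeta_e, \widetilde\zeta_e$, and the two kernels combine to $d\zeta'/\zeta'$ on these arcs. With $C_{-e} = -1/\overline{C_e}$, the reality condition \eqref{corresp} makes the $C_{-e}$ integral the conjugate of the $C_e$ integral, so the difference is purely real. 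The case where $\zeta$ lies inside the sector is handled by the residue jump, which is exactly what the analytic continuation $\widetilde{\mathcal X}_m$ cancels. Your observation about the $\ell_{\pm(e+m)}$ rays (that $\arg Z_{e+m} \to \arg c$ independently of $\arg a$) is correct and is also how the paper disposes of those terms.
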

\begin{proof}
Let $\widetilde{\Upsilon}_m$ denote the analytic continuation of $\Upsilon_m$ yielding $\widetilde{\mathcal{X}}_m$. Start with a fixed value $\arg a \equiv \rho_0$, for $\rho_0$ different from $\arg Z_m(0), \arg (-Z_m(0))$. For another ray $\arg a \equiv \rho$, we compute $\left. \Upsilon_m\right|_{\substack{a=0\\ \arg a = \rho}} - \left. \Upsilon_m\right|_{\substack{a=0\\ \arg a = \rho_0}}$ (without analytic continuation for the moment).

The integrals in \eqref{inteq2} are of two types. One type is of the form
\begin{equation}\label{1sttype}
\int_0^{\zeta_{\pm e}} \frac{d\zeta'}{\zeta' - \zeta} \log\left[ 1 - e^{i\Upsilon_{\pm e}(\zeta')} \right] +  \int_{\zeta_{\pm e}}^{\zeta_{\pm e}\infty} \frac{\zeta d\zeta'}{\zeta'(\zeta' - \zeta)} \log\left[ 1 - e^{i\Upsilon_{\pm e}(\zeta')} \right]
\end{equation}
The other type appears only in the outside part of the wall of marginal stability. Since $Z : \Gamma \to \cone$ is a homomorphism, $Z_{\gamma_e + \gamma_m} = Z_{\gamma_e} + Z_{\gamma_m}$. At $a = 0$, $Z_e = a = 0$, so $Z_{e+m} = Z_m$. Hence, $\ell_m = \ell_{e+m}$ at the singular fiber. This second type of integral is thus of the form
\begin{equation}\label{2ndtype}
\int_0^{\zeta_{\pm m}} \frac{d\zeta'}{\zeta' - \zeta} \log\left[ 1 - e^{i\Upsilon_{\pm (e+m)}(\zeta')} \right] +  \int_{\zeta_{\pm m}}^{\zeta_{\pm m}\infty} \frac{\zeta d\zeta'}{\zeta'(\zeta' - \zeta)} \log\left[ 1 - e^{i\Upsilon_{\pm (e+m)}(\zeta')} \right]
\end{equation}
Since the $\ell_m$ stays fixed at $a = 0$ independently of $\arg a$, \eqref{2ndtype} does not depend on $\arg a$, so this has a well-defined limit as $a \to 0$. We should focus then only on integrals of the type \eqref{1sttype}. For a different $\arg a$, $\zeta_e$ changes to another point $\widetilde{\zeta}_e$ in the unit circle. See Figure \ref{zetilde}. The paths of integration change accordingly. We have two possible outcomes: either $\zeta$ lies outside the sector determined by the two paths, or $\zeta$ lies inside the region.

\begin{figure}[htbp]
	\centering
		\includegraphics[width=0.45\textwidth]{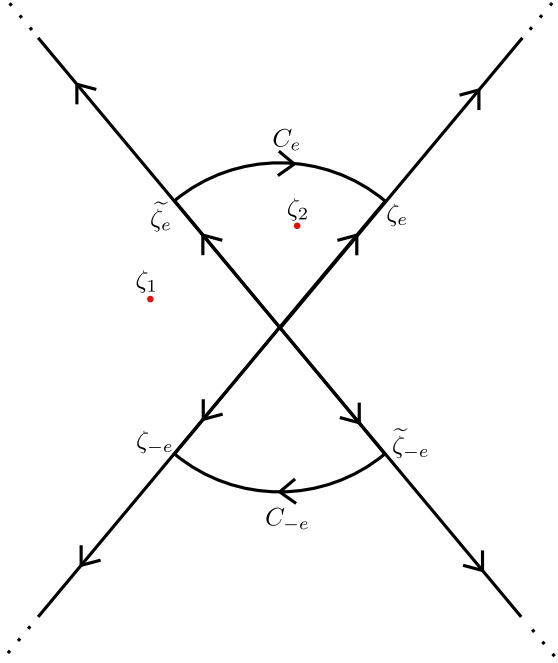}
	\caption{As $\arg a$ changes, the paths of integration change}
	\label{zetilde}
\end{figure}

In the first case ($\zeta_1$ on Figure \ref{zetilde}), the integrands
\begin{equation}\label{2kernl}
 \frac{\log[1-e^{i\Upsilon_{\pm e}(\zeta')}]}{\zeta'-\zeta}, \hspace{5 mm} \frac{\zeta\log[1-e^{i\Upsilon_{\pm e}(\zeta')}]}{\zeta'(\zeta'-\zeta)}
\end{equation}
are holomorphic on $\zeta'$ in the sector between the two paths. By Cauchy's formula, the difference between the two integrals is just the integration along a path $C_{\pm e}$ between the two endpoints $\zeta_{\pm e}, \widetilde{\zeta}_{\pm e}$. If $f(s)$ parametrizes the path $C_e$, let $C_{-e} = -1/\overline{f(s)}$. The orientation of $C_{e}$ in the contour containing $\infty$ is opposite to the contour containing 0. Similarly for $C_{-e}$. Thus, the difference of $\Upsilon_m$ for these two values of $\arg a$ is the integral along $C_{e}, C_{-e}$ of the difference of kernels \eqref{2kernl}, namely:
\begin{equation}\label{realdiff}
\int_{C_e} \frac{d\zeta'}{\zeta'} \log[1-e^{i\Upsilon_e(\zeta')}] - \int_{C_{-e}} \frac{d\zeta'}{\zeta'} \log[1-e^{-i\Upsilon_e(\zeta')}]
\end{equation} 

Even if $e^{i\Upsilon_e(\zeta')} = 1$ for $\zeta'$ in the contour, the integrals in \eqref{realdiff} are convergent, so this is well-defined for any values of $\theta_e \neq 0$. By symmetry of $C_e, C_{-e}$ and the reality condition \eqref{corresp}, the second integral is the conjugate of the first one. Thus \eqref{realdiff} is only real.

When $\zeta$ hits one of the contours, $\zeta$ coincides with one of the $\ell_e$ or $\ell_{-e}$ rays, for some value of $\arg a$. The contour integrals jump since $\zeta$ lies now inside the contour ($\zeta_2$ in Figure \ref{zetilde}). The jump is by the residue of the integrands \eqref{2kernl}. This gives the jump of $\mathcal{X}_m$ that the analytic continuation around $a = 0$ cancels. Therefore, only the real part of $\Upsilon_m$ depends on $\arg a$.
\end{proof}

By the previous lemma, $\left. \widetilde{\Upsilon}_m\right|_{\substack{a=0\\ \arg a = \rho}} - \left. \widetilde{\Upsilon}_m\right|_{\substack{a=0\\ \arg a = \rho_0}}$ is real and is given by \eqref{realdiff}. Define then a new gauge transformation:
\begin{equation}\label{fingauge}
\widetilde{\theta}_m = \theta'_m - \frac{1}{2\pi} \left\{\int_{C_e} \frac{d\zeta'}{\zeta'} \log[1-e^{i\Upsilon_e(\zeta')}] + \int_{C_{-e}} \frac{d\zeta'}{\zeta'} \log[1-e^{-i\Upsilon_e(\zeta')}] \right\}
\end{equation}

This eliminates the dependence on $\arg a$ for the limit $\left. \widetilde{\mathcal{X}}_m \right|_{a=0}$. As we did in \S \ref{clasov} in Theorem \ref{mprtom}, we can extend the torus fibration $\mathcal{M}'$ by gluing a $S^1$-fiber bundle of the form $D \times (0, 2\pi) \times S^1$ for $D$ a disk around $a = 0$, $\theta_e \in (0,2\pi)$ and $\widetilde{\theta}_m$ the new coordinate of the $S^1$ fibers. Using Taub-NUT space as a local model for this patch, the trivial $S^1$ bundle can be extended to $\theta_e = 0$ where the fiber degenerates into a point (nevertheless, in Taub-NUT coordinates the space is still locally isomorphic to $\cone^2$). Since $\widetilde{\mathcal{X}}_m \equiv 0$ if $\theta_e = 0$ as in \S \ref{clasov}, in this new manifold $\mathcal{M}$ we  thus obtain a well defined function $\widetilde{\mathcal{X}}_m$.

\subsection{Extension of the derivatives}\label{exderv}
\index{Extension of the derivatives@\emph{Extension of the derivatives}}%

So far we were able to extend the functions $\mathcal{X}_e, \widetilde{\mathcal{X}}_m$ to $\mathcal{M}$. Unfortunately, we can no longer bound uniformly on $\nu$ the derivatives of $\widetilde{\mathcal{X}}_m$ near $a = 0$, so the Arzela-Ascoli arguments no longer work here. Since there's no difference on the definition of $\mathcal{X}_e$ at $a = 0$ from that of the regular fibers, this function extends smoothly to $a = 0$.

We have to obtain the extension of all derivatives of $\widetilde{\mathcal{X}}_m$ directly from its definition. It suffices to extend the derivatives of $\mathcal{X}_m$ only, as the analytic continuation doesn't affect the symplectic form $\varpi(\zeta)$ (see below).

\begin{lemma}
$\log \mathcal{X}_m$ extends smoothly to $\mathcal{M}$, for $\theta_e \neq 0$.
\end{lemma}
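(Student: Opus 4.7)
The plan is to mimic the extension argument of Section \ref{c1ext}, treating the Pentagon case as an exponentially small perturbation of the Ooguri-Vafa model near $a=0$. First I write
\[
\log \mathcal{X}_m = \log \mathcal{X}_m^{\mathrm{sf}} + \tfrac{1}{4\pi} \Phi_m(a,\zeta,\theta),
\]
where $\Phi_m$ is the sum of the correction integrals along $\ell_{\pm e}$ (in $\bein$) and additionally along $\ell_{\pm (e+m)}$ (in $\bout$), as in \eqref{inteq2}. Because $\lim_{a\to 0} Z_m = c \neq 0$ and the initial data defining $\mathcal{X}_m^{\mathrm{sf}}$ are holomorphic at $a=0$, the semiflat factor $\log \mathcal{X}_m^{\mathrm{sf}}$ extends smoothly in $(a,\bar a,\theta_e,\theta_m,\zeta)$ across $a=0$ for $\zeta \in \cone^{\times}$; no logarithmic divergence of the Ooguri-Vafa type enters here. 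It remains to analyze $\Phi_m$.

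Next I split $\Phi_m$ into the pieces coming from $\ell_{\pm(e+m)}$ and from $\ell_{\pm e}$. The $\ell_{\pm(e+m)}$ integrals are unproblematic: at $a=0$ the direction of these rays is $\arg(Z_m)$, which stays bounded away from $0$ and $\infty$ in the $\zeta$-plane, so by Lemma \ref{estbadfib} (and the saddle point estimate \eqref{frappx}) the integrands are smooth in all parameters and their $a$-derivatives are integrable, giving a smooth limit at $a=0$. The genuinely delicate pieces are the $\ell_{\pm e}$ integrals, whose contours collapse onto the common line $L_e$ through the origin as $a\to 0$. For these I apply the same four-part decomposition used in \eqref{4ints}--\eqref{cancel}: write
\[
\frac{\zeta'+\zeta}{\zeta'(\zeta'-\zeta)} = \frac{-1}{\zeta'} + \frac{2}{\zeta'-\zeta},
\]
pair the $\ell_e$ and $\ell_{-e}$ contributions via the reality relation \eqref{corresp}, and observe that the $-1/\zeta'$ pieces (including the $da$-partial that produced the $\log a$ term in \S\ref{clasmet}) cancel identically here, since $Z_m \not\to 0$ eliminates the $\log a$ obstruction altogether. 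What remains after pairing are precisely the integrals whose differentiated forms were treated in Section \ref{c1ext}, with the sole modification that the jump datum is now $\Upsilon_e$ (not $\theta_e$) along $\ell_{\pm e}$.

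Here the gauge transformation $\theta_m \mapsto \widetilde{\theta}_m$ defined in \eqref{newmp}, \eqref{outmp}, \eqref{outmp2} and \eqref{fingauge} plays its role: by construction it absorbs exactly the $\arg a$-dependent real contribution \eqref{realdiff} that would otherwise obstruct continuity, so in the $\widetilde{\theta}_m$-coordinate the combined expression $\log \widetilde{\mathcal{X}}_m$ has a well-defined limit and each first derivative $\partial_a, \partial_{\bar a}, \partial_{\theta_e}, \partial_{\widetilde\theta_m}$ reduces, after dominated convergence, to an absolutely convergent integral over $L_e$ whose integrand is smooth in $(a, \bar a, \theta_e, \widetilde\theta_m)$ and piecewise holomorphic in $\zeta$. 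Higher derivatives are controlled inductively by the same Lebesgue dominated convergence argument, since differentiating under the integral sign produces only additional bounded rational factors in $\zeta'$ and additional bounded powers of $\partial\Upsilon$ which, by the estimates of \cite{rhprob} used in Theorem \ref{homeps}, stay uniformly bounded for $R$ large.

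Finally, to pass from the first iteration to the actual solution $\Upsilon_m$, I invoke Lemma \ref{estbadfib}: the iterates satisfy $\Upsilon^{(\nu+1)}_m - \Upsilon^{(\nu)}_m = O(e^{-2\pi \nu R |Z_m|})$, and differentiating the defining integral relations shows the same exponential bound propagates to every derivative (each differentiation costs only a polynomial factor in $R$, defeated by the exponential for $R$ large). Hence the sequence $\Upsilon^{(\nu)}_m$ converges to $\Upsilon_m$ in every $C^k$ norm on compact subsets of $\{\theta_e \neq 0\} \cap \mathcal{M}$, and the smooth extension of $\Upsilon^{(1)}_m$ established in the previous paragraph transfers to $\Upsilon_m$, proving the lemma. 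The main obstacle is the simultaneous collapse of the contour $\ell_{\pm e}$ and the need to absorb the resulting real, $\arg a$-dependent shift in $\mathrm{Im}\,\log \mathcal{X}_m$; this is precisely what the gauge transformation \eqref{fingauge} was designed for, and once it is in place the problem reduces to uniform estimates that carry over verbatim from the Ooguri-Vafa analysis.
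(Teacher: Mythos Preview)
Your argument has a genuine gap at the very first step. You claim that because $\lim_{a\to 0} Z_m = c \neq 0$, the semiflat factor $\log \mathcal{X}_m^{\mathrm{sf}}$ extends smoothly across $a=0$, with ``no logarithmic divergence of the Ooguri-Vafa type.'' This is false: the Picard-Lefschetz monodromy $\gamma_m \mapsto \gamma_m + \gamma_e$ around $a=0$ forces $Z_m$ to have a logarithmic branch point there, and the paper makes this explicit in \eqref{xmcomplet}, taking $Z_m(a) = \frac{1}{2\pi i}(a\log a - a) + f(a)$ with $f$ holomorphic and $f(0)\neq 0$. The value $Z_m(0)=f(0)$ is finite and nonzero, but $\partial_a Z_m \sim \frac{1}{2\pi i}\log a$ diverges. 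Hence $\partial_a \log \mathcal{X}_m^{\mathrm{sf}}$ carries exactly the same $\frac{\log a}{\zeta}$ singularity that appeared in \eqref{3ext} for Ooguri-Vafa; you have not escaped it.

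The paper's proof is organized around cancelling this divergence. After isolating the $\ell_{\pm e}$ contributions, one differentiates in $a$ and obtains the analogue of \eqref{last3}: $\frac{\log a}{\zeta}$ plus two integrals, now with $\Upsilon_e$ in place of $\theta_e$. An asymptotic expansion of $\frac{e^{i\Upsilon_e(\zeta')}}{1-e^{i\Upsilon_e(\zeta')}}$ at $\zeta'=0$ reduces the divergent part to its constant term, and the computation of \eqref{simpler}--\eqref{cancel} then yields the cancellation via the identity $\frac{1}{1-e^{i\Upsilon_e(0)}} + \frac{1}{1-e^{-i\Upsilon_e(0)}} = 1$; the same mechanism is invoked order by order for higher $a$-derivatives, comparing against the asymptotics of derivatives of $a\log a - a$. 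Your sentence ``the $-1/\zeta'$ pieces \ldots\ cancel identically here, since $Z_m \not\to 0$ eliminates the $\log a$ obstruction'' conflates the finiteness of $Z_m(0)$ with the regularity of its derivative and skips this entire step. Once you reinstate the $a\log a$ term in $Z_m$ and carry out the cancellation, the remainder of your outline (treatment of the $\ell_{\pm(e+m)}$ pieces, role of the gauge transformation \eqref{fingauge}, and the use of Lemma \ref{estbadfib}) is broadly compatible with the paper's approach.
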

\begin{proof}
For convenience, we rewrite $\Upsilon_m$ with the final magnetic coordinate $\widetilde{\theta_m}$:
\begin{align*}
\Upsilon_m & = \widetilde{\theta_m} + \frac{1}{2\pi} \left\{\int_{C_e} \frac{d\zeta'}{\zeta'} \log[1-e^{i \Upsilon_e(\zeta')}] -
\int_{C_{-e}} \frac{d\zeta'}{\zeta'} \log[1-e^{-i \Upsilon_e(\zeta')}] \right\}\\
& + \frac{1}{2\pi}\sum_{\gamma'} \Omega(\gamma';a) \left\langle \gamma_m, \gamma' \right\rangle  \left\{ \vphantom{\int_0^b}  \int_{0}^{\zeta_{\gamma'}} \frac{d\zeta'}{\zeta' - \zeta} \log\left[ 1 - \mathcal{X}_{\gamma'}^{\text{sf}}(a,\zeta', \Upsilon_{\gamma'}) \right] \right. +  \\
& \left.  \int_{\zeta_{\gamma'}}^{\zeta_{\gamma'} \infty} \frac{\zeta d\zeta'}{\zeta'(\zeta' - \zeta)} \log\left[ 1 - \mathcal{X}_{\gamma'}^{\text{sf}}(a,\zeta', \Upsilon_{\gamma'}) \right] \right\}
\end{align*}
where $e^{i \Upsilon_e(\zeta')}$ is evaluated only at $a = 0$. For $\gamma'$ of the type $\pm \gamma_e \pm \gamma_m$, $\mathcal{X}_{\gamma'}$ and its derivatives still have exponential decay along the $\ell_{\gamma'}$ ray, so these parts in $\Upsilon_m$ extend to $a =0$ smoothly. It thus suffices to extend only

\begin{align}
\Upsilon_m & = \widetilde{\theta_m} + \frac{1}{2\pi} \left\{\int_{C_e} \frac{d\zeta'}{\zeta'} \log[1-e^{i \Upsilon_e(\zeta')}] -
\int_{C_{-e}} \frac{d\zeta'}{\zeta'} \log[1-e^{-i \Upsilon_e(\zeta')}] \right. \notag\\
& +  \int_{0}^{\zeta_{e}} \frac{d\zeta'}{\zeta' - \zeta} \log\left[ 1 - \mathcal{X}_{e}^{\text{sf}}(a,\zeta', \Upsilon_{e}) \right] + \int_{\zeta_{e}}^{\zeta_{e} \infty} \frac{\zeta d\zeta'}{\zeta'(\zeta' - \zeta)} \log\left[ 1 - \mathcal{X}_{e}^{\text{sf}}(a,\zeta', \Upsilon_{e}) \right] \notag\\
& -\left. \int_{0}^{-\zeta_{e}} \frac{d\zeta'}{\zeta' - \zeta} \log\left[ 1 - {\mathcal{X}_{e}^{\text{sf}}}^{-1}(a,\zeta', -\Upsilon_{e}) \right] -  \int_{-\zeta_{e}}^{-\zeta_{e} \infty} \frac{\zeta d\zeta'}{\zeta'(\zeta' - \zeta)} \log\left[ 1 - {\mathcal{X}_{e}^{\text{sf}}}^{-1}(a,\zeta', -\Upsilon_{e}) \right]\right\}\label{upsmcompl}
\end{align}
together with the semiflat part $\pi R \frac{Z_m}{\zeta} + \pi R \zeta \overline{Z_m}$, which we assume is as in the Generalized Ooguri-Vafa case, namely:
\begin{equation}\label{xmcomplet}
 \mathcal{X}_m =  \exp\left( \frac{-i R }{2\zeta}(a\log a - a + f(a)) + i \Upsilon_m + \frac{i\zeta R}{2} (\overline{a} \log
\overline{a} - \overline{a} + \overline{f(a)} )\right)
\end{equation}
for a holomorphic function $f$ near $a = 0$ and such that $f(0) \neq 0$. The derivatives of the terms involving $f(a)$ clearly extend to $a = 0$, so we focus on the rest, as in \S \ref{c1ext}.

We show first that $\dfrac{\partial \log \mathcal{X}_m}{\partial_{\theta_e}}, \dfrac{\partial \log \mathcal{X}_m}{\partial_{\theta_m}}$ extend to $a = 0$. Since there is no difference in the proof between electric or magnetic coordinates, we'll denote by $\partial_\theta$ a derivative with respect to any of these two variables.

We have:
\begin{align*}
\frac{\partial}{\partial \theta}\log \mathcal{X}_m & = \frac{-i}{2\pi} \left\{ \int_{C_e} \frac{d\zeta'}{\zeta'} \frac{e^{i \Upsilon_e(\zeta')}}{1-e^{i \Upsilon_e(\zeta')}} \frac{\partial \Upsilon_e(\zeta')}{\partial \theta} - \int_{C_{-e}} \frac{d\zeta'}{\zeta'} \frac{e^{-i \Upsilon_e(\zeta')}}{1-e^{-i \Upsilon_e(\zeta')}} \frac{\partial \Upsilon_e(\zeta')}{\partial \theta} \right.\\
& +  \int_{0}^{\zeta_{e}} \frac{d\zeta'}{\zeta' - \zeta} \frac{\mathcal{X}_e(\zeta')}{1-\mathcal{X}_e(\zeta')}\frac{\partial \Upsilon_e(\zeta')}{\partial \theta} + \int_{\zeta_{e}}^{\zeta_{e} \infty} \frac{\zeta d\zeta'}{\zeta'(\zeta' - \zeta)} \frac{\mathcal{X}_e(\zeta')}{1-\mathcal{X}_e(\zeta')}\frac{\partial \Upsilon_e(\zeta')}{\partial \theta}\\
& \left. +  \int_{0}^{-\zeta_{e}} \frac{d\zeta'}{\zeta' - \zeta} \frac{\mathcal{X}^{-1}_e(\zeta')}{1-\mathcal{X}^{-1}_e(\zeta')}\frac{\partial \Upsilon_e(\zeta')}{\partial \theta} + \int_{-\zeta_{e}}^{\zeta_{e} \infty} \frac{\zeta d\zeta'}{\zeta'(\zeta' - \zeta)} \frac{\mathcal{X}^{-1}_e(\zeta')}{1-\mathcal{X}^{-1}_e(\zeta')}\frac{\partial \Upsilon_e(\zeta')}{\partial \theta} \right\}
\end{align*}

when $a \to 0$, $ \dfrac{\mathcal{X}_e(\zeta')}{1-\mathcal{X}_e(\zeta')} \to \dfrac{e^{i \Upsilon_e(\zeta')}}{1-e^{i \Upsilon_e(\zeta')}}$. The integrals along $C_e$ and $C_{-e}$ represent a difference of integrals along the contour in the last integrals and a fixed contour, as in Figure \ref{zetilde}. Thus, when $a = 0$,

\begin{align}
\left. 2\pi i \frac{\partial}{\partial \theta}\log \Upsilon_m\right|_{a =0} & =  \int_{0}^{b} \frac{d\zeta'}{\zeta' - \zeta} \frac{\mathcal{X}_e(\zeta')}{1-\mathcal{X}_e(\zeta')}\frac{\partial \Upsilon_e(\zeta')}{\partial \theta} + \int_{b}^{b \infty} \frac{\zeta d\zeta'}{\zeta'(\zeta' - \zeta)} \frac{\mathcal{X}_e(\zeta')}{1-\mathcal{X}_e(\zeta')}\frac{\partial \Upsilon_e(\zeta')}{\partial \theta} \notag\\
& \left. +  \int_{0}^{-b} \frac{d\zeta'}{\zeta' - \zeta} \frac{\mathcal{X}^{-1}_e(\zeta')}{1-\mathcal{X}^{-1}_e(\zeta')}\frac{\partial \Upsilon_e(\zeta')}{\partial \theta} + \int_{-b}^{-b \infty} \frac{\zeta d\zeta'}{\zeta'(\zeta' - \zeta)} \frac{\mathcal{X}^{-1}_e(\zeta')}{1-\mathcal{X}^{-1}_e(\zeta')}\frac{\partial \Upsilon_e(\zeta')}{\partial \theta} \right\} \label{polec}
\end{align}
for a fixed point $b$ in the unit circle, independent of $a$. If $\Upsilon_e(\zeta') = 1$ for a point $c$ in the line $L$ passing through the origin and $b$, then as seen in \cite[Lemma 4.2]{rhprob}, the function $\mathcal{X}_m$ develops a zero on the right side of such line. Nevertheless, the analytic continuation $\widetilde{\mathcal{X}}_m$ around $a = 0$ introduces a factor of the form $(1 - \mathcal{X}_e)^{-1}$ when $a$ changes from region III to region I in Figure \ref{3reg}, so the pole at $c$ on the right side of $L$ for the derivative $\dfrac{\partial}{\partial \theta}\log \Upsilon_m$ coming from the integrand in \eqref{polec} is canceled by analytic continuation. Hence, the integrals are well defined and thus the left side has an extension to $a = 0$.

Now, for the partials with respect to $a, \overline{a}$, there are two different types of dependence: one is the dependence of the contours, the other is the dependence of the integrands. The former dependence is only present in \eqref{upsmcompl}, as the contours in Figure \ref{zetilde} change with $\arg a$. A simple application of the Fundamental Theorem of Calculus in each integral in \eqref{upsmcompl} gives that this change is:

\begin{align*}
\left. -2\pi i  \frac{\partial}{\partial \arg a}\log \Upsilon_m\right|_{a =0}  & = \log[1-e^{-i \Upsilon_e(\zeta_e)}]  - \log[1-e^{-i \Upsilon_e(\zeta_e)}]\\
& - \log[1-e^{-i \Upsilon_e(\zeta_e)}] + \log[1-e^{-i \Upsilon_e(\zeta_e)}] = 0,
\end{align*}
where we again used the fact that the integrals along $C_e$ and $C_{-e}$ represent the difference between the integrals in the other pairs with respect to two different rays, one fixed. By continuity on parameters, the terms are still 0 if $\Upsilon_e(\zeta_e) = 0$. Compare this with \eqref{dap1}, where we obtained this explicitly.

Then there is the dependence on $a, \overline{a}$ on the integrands and the semiflat part. Focusing on $a$ only, we take partials on $\log \mathcal{X}_m$ in \eqref{xmcomplet} (ignoring constants and parts that clearly extend to $a = 0$). This is:

\begin{equation}\label{logadiv}
\frac{\log a}{\zeta} + \int_0^{\zeta_e} \frac{d\zeta'}{\zeta'(\zeta' - \zeta)} \frac{\mathcal{X}_e}{1-\mathcal{X}_e} + \int_0^{-\zeta_e} \frac{d\zeta'}{\zeta'(\zeta' - \zeta)} \frac{\mathcal{X}^{-1}_e}{1-\mathcal{X}^{-1}_e}
\end{equation}

This is the equivalent of \eqref{last3} in the general case. In the limit $a \to 0$, we can do an asymptotic expansion of $\dfrac{e^{i \Upsilon_e(\zeta')}}{1-e^{i \Upsilon_e(\zeta')}} = \dfrac{e^{i \Upsilon_e(0)}}{1-e^{i \Upsilon_e(0)}} + O(\zeta')$. Clearly when we write this expansion in \eqref{logadiv}, the only divergent term at $a = 0$ is the first degree approximation in the integral. Thus, we can focus on that and assume that the $\dfrac{\mathcal{X}_e}{1-\mathcal{X}_e}$ (resp. $\dfrac{\mathcal{X}^{-1}_e}{1-\mathcal{X}^{-1}_e}$) factor is constant. If we do the partial fraction decomposition, we can run the same argument as in Eqs. \eqref{simpler} up to \eqref{cancel} and obtain that \eqref{logadiv} is actually 0 at $a = 0$. The only identity needed is
\[ \frac{1}{1-e^{i\Upsilon_e(0)}} +  \frac{1}{1-e^{-i\Upsilon_e(0)}}  =  1 \]

The argument also works for the derivative with respect to $\overline{a}$, now with an asymptotic expansion around $\infty$ of $\Upsilon_e$.

This shows that $\widetilde{\mathcal{X}_m}$ extends in a $C^1$ way to $a = 0$. For the $C^\infty$ extension, derivatives with respect to any $\theta$ coordinate work in the same way, all that was used was the specific form of the contours $C_e, C_{-e}$. The same thing applies to the dependence on the contours $C_e, C_{-e}$. For derivatives with respect to $a, \overline{a}$ in the integrands, we can again do an asymptotic expansion of $\Upsilon_e$ at 0 or $\infty$ and compare it to the asymptotic of the corresponding derivative of $a \log a - a$ as $a \to 0$.

\end{proof}

Nothing we have done in this section is particular of the Pentagon example. We only needed the specific values of $\Omega(\gamma;u)$ given in \eqref{omgpar} to obtain the Pentagon identities at the wall and to perform the analytic continuation of $\mathcal{X}_m$ around $u = 2$.  For any integrable systems data as in section \ref{intsys}  with suitable invariants $\Omega(\gamma;u)$ allowing the wall-crossing formulas and analytic continuation, we can do the same isomonodromic deformation of putting all the jumps at a single admissible ray, perform saddle-point analysis and obtain the same extensions of the Darboux coordinates $\mathcal{X}_\gamma$. This finishes the proof of Theorem \ref{extbf}.

What is exclusive of the Pentagon case is that we have a well-defined hyperk\"{a}hler metric $g_\text{OV}$ that we can use as a local model of the metric to be constructed here.

The extension of the holomorphic symplectic form $\varpi(\zeta)$ is now straightforward. We proceed as in \cite{gaiotto} by first writing:
\begin{equation*}
\varpi(\zeta) = -\frac{1}{4\pi^2 R} \frac{d\mathcal{X}_e}{\mathcal{X}_e} \wedge \frac{d\mathcal{X}_m}{\mathcal{X}_m}
\end{equation*} 

Where we used the fact that the jumps of the functions $\mathcal{X}_\gamma$ are via the symplectomorphisms $\mathcal{K}_{\gamma'}$ of the complex torus $T_a$ (see \eqref{kjump}) so $\varpi(\zeta)$ remains the same if we take $\mathcal{X}_m$ or its analytic continuation $\widetilde{\mathcal{X}_m}$.

We need to show that $\varpi(\zeta)$ is of the form
\begin{equation}
-\frac{i}{2\zeta}\omega_+ + \omega_3 -\frac{i \zeta}{2} \varpi_-
\end{equation}
that is, $\varpi(\zeta)$ must have simple poles at $\zeta = 0$ and $\zeta = \infty$, even at the singular fiber where $a = 0$.

By definition, $\mathcal{X}_e = \exp(\frac{\pi R a}{\zeta} + i\Upsilon_e + \pi R \zeta \overline{a})$. Thus

\begin{equation*}
\frac{d\mathcal{X}_e(\zeta)}{\mathcal{X}_e(\zeta)} = \frac{\pi R da}{\zeta} + i d\Upsilon_e(\zeta) +\pi R \zeta d\overline{a}
\end{equation*}

By \eqref{inteq1}, and since $\lim_{a \to 0} Z_m \neq 0$, $\mathcal{X}_m$ (resp. $\mathcal{X}_{-m}$) of the form $\exp(\frac{\pi R Z_m(a)}{\zeta} + i\Upsilon_m + \pi R \zeta \overline{Z_m(a)})$ still has exponential decay when $\zeta$ lies in the $\ell_m$ ray (resp. $\ell_{-m}$), even if $a = 0$. The differential $d \Upsilon_e(\zeta)$ thus exists for any $\zeta \in \cpone$ since the integrals defining it converge for any $\zeta$.

As in \cite{gaiotto}, we can write
\[ \frac{d\mathcal{X}_e}{\mathcal{X}_e} \wedge \frac{d\mathcal{X}_m}{\mathcal{X}_m} = \frac{d\mathcal{X}_e}{\mathcal{X}_e} \wedge \left( \frac{d\mathcal{X}^{\text{sf}}_m}{\mathcal{X}^{\text{sf}}_m} + \mathcal{I_{\pm}} \right), \]
for $\mathcal{I}_\pm$ denoting the corrections to the semiflat function. By the form of $\mathcal{X}^{\text{sf}} =  \exp(\frac{\pi R Z_m(a)}{\zeta} + i\theta_m + \pi R \zeta \overline{Z_m(a)})$, the wedge involving only the semiflat part has only simple poles at $\zeta = 0$ and $\zeta = \infty$, so we can focus on the corrections. These are of the form
\begin{align*}
\frac{d\mathcal{X}_e(\zeta)}{\mathcal{X}_e(\zeta)} \wedge \mathcal{I_{\pm}} & = \frac{-i}{2\pi} \left\{ \int_0^{\zeta_e} \frac{d\zeta'}{\zeta'-\zeta} \frac{\mathcal{X}_e(\zeta')}{1-\mathcal{X}_e(\zeta')} \frac{d\mathcal{X}_e(\zeta)}{\mathcal{X}_e(\zeta)} \wedge \frac{d\mathcal{X}_e(\zeta')}{\mathcal{X}_e(\zeta')}\right.\\
& + \int_{\zeta_e}^{\zeta_e \infty} \frac{\zeta d\zeta'}{\zeta'(\zeta'-\zeta)}\frac{\mathcal{X}_e(\zeta')}{1-\mathcal{X}_e(\zeta')}\frac{d\mathcal{X}_e(\zeta)}{\mathcal{X}_e(\zeta)} \wedge \frac{d\mathcal{X}_e(\zeta')}{\mathcal{X}_e(\zeta')}\\
& + \int_0^{-\zeta_e} \frac{d\zeta'}{\zeta'-\zeta} \frac{\mathcal{X}^{-1}_e(\zeta')}{1-\mathcal{X}^{-1}_e(\zeta')} \frac{d\mathcal{X}_e(\zeta)}{\mathcal{X}_e(\zeta)} \wedge \frac{d\mathcal{X}_e(\zeta')}{\mathcal{X}_e(\zeta')}\\
& + \left. \int_{-\zeta_e}^{-\zeta_e \infty} \frac{\zeta d\zeta'}{\zeta'(\zeta'-\zeta)}\frac{\mathcal{X}^{-1}_e(\zeta')}{1-\mathcal{X}^{-1}_e(\zeta')}\frac{d\mathcal{X}_e(\zeta)}{\mathcal{X}_e(\zeta)} \wedge \frac{d\mathcal{X}_e(\zeta')}{\mathcal{X}_e(\zeta')} \right\}
\end{align*}

In the ``inside'' part of the wall of marginal stability. A similar equation holds in the other side. We can simplify the wedge products above by taking instead
\begin{equation}
\frac{d\mathcal{X}_e(\zeta)}{\mathcal{X}_e(\zeta)} \wedge \left(\frac{d\mathcal{X}_e(\zeta)}{\mathcal{X}_e(\zeta)} - \frac{d\mathcal{X}_e(\zeta')}{\mathcal{X}_e(\zeta')} \right) = \pi R \left[ \left( \frac{1}{\zeta} - \frac{1}{\zeta'}\right)da + (\zeta - \zeta')d\overline{a}\right] +i \left( d\Phi_e(\zeta) - d\Phi_e(\zeta') \right)
\end{equation}

Recall that $\Phi_e$ represents the corrections to $\theta_e$, so $\Upsilon_e = \theta_e + \Phi_e$. By \S \ref{solut}, $\Phi_e$ and $d\Phi_e$ are defined for $\zeta = 0$ $\zeta = \infty$ even if $a = 0$, since $\lim_{a \to 0} Z_m(a) \neq 0$ and the exponential decay in $\mathcal{X}_m^\text{sf}$ still present guarantees convergence of the integrals in \ref{inteq1}. Hence, the terms involving $d\Phi_e(\zeta) - d\Phi_e(\zeta')$ are holomorphic for any $\zeta \in \cpone$. It thus suffices to consider the other terms. After simplifying the integration kernels, we obtain

\begin{align*}
\frac{\pi R da}{\zeta} \int_0^{\zeta_e} \frac{d\zeta'}{\zeta'} \frac{\mathcal{X}_e(\zeta')}{1-\mathcal{X}_e(\zeta')} & +\pi R da \int_{\zeta_e}^{\zeta_e \infty} \frac{d\zeta'}{(\zeta')^2} \frac{\mathcal{X}_e(\zeta')}{1-\mathcal{X}_e(\zeta')}\\
\frac{\pi R da}{\zeta} \int_0^{-\zeta_e} \frac{d\zeta'}{\zeta'} \frac{\mathcal{X}^{-1}_e(\zeta')}{1-\mathcal{X}^{-1}_e(\zeta')} & +\pi R da \int_{-\zeta_e}^{-\zeta_e \infty} \frac{d\zeta'}{(\zeta')^2} \frac{\mathcal{X}^{-1}_e(\zeta')}{1-\mathcal{X}^{-1}_e(\zeta')}\\
-\pi R d\overline{a} \int_0^{\zeta_e} d\zeta' \frac{\mathcal{X}_e(\zeta')}{1-\mathcal{X}_e(\zeta')} & -\pi R \zeta d\overline{a} \int_{\zeta_e}^{\zeta_e \infty} \frac{d\zeta'}{\zeta'} \frac{\mathcal{X}_e(\zeta')}{1-\mathcal{X}_e(\zeta')}\\
-\pi R d\overline{a} \int_0^{\zeta_e} d\zeta' \frac{\mathcal{X}^{-1}_e(\zeta')}{1-\mathcal{X}^{-1}_e(\zeta')} & -\pi R \zeta d\overline{a} \int_{\zeta_e}^{\zeta_e \infty} \frac{d\zeta'}{\zeta'} \frac{\mathcal{X}^{-1}_e(\zeta')}{1-\mathcal{X}^{-1}_e(\zeta')}\\
\end{align*}

The only dependence on $\zeta$ is in the factors $\zeta, 1/\zeta$. Thus $\varpi(\zeta)$ has only simple poles at $\zeta = 0$ and $\zeta = \infty$.

Finally, the estimates in Lemma \ref{estbadfib} show that if we recover the hyperk\"{a}hler metric $g$ from the holomorphic symplectic form $\varpi(\zeta)$ as in \S \ref{extmetric} and \S \ref{genextmtr}, we obtain that the hyperk\"{a}hler metric for the Pentagon case is the metric obtained in \ref{extmetric} for the Ooguri-Vafa case plus smooth corrections near $a = 0, \theta_e = 0$, so it extends to this locus.

 This gives Theorem \ref{smfrm}.

\bibliographystyle{amsplain}
\bibliography{diss}        

\end{document}